\documentclass{amsart}

\usepackage[a-2u]{pdfx}

\usepackage[utf8]{inputenc}
\usepackage{lmodern}
\usepackage{amsmath}        % extensions for typesetting of math
\usepackage{amsfonts}       % math fonts
\usepackage{amsthm}         % theorems, definitions, etc.
\usepackage{amssymb} 
\usepackage{bm}             % boldface symbols (\bm)

\usepackage{mathtools}
\usepackage{thmtools}
\usepackage{xcolor}

\theoremstyle{plain}
\newtheorem*{remark}{Remark}
\newtheorem*{definition}{Definition}
\newtheorem{theorem}{Theorem}[section]
\newtheorem{proposition}{Proposition}[section]
\newtheorem{lemma}[theorem]{Lemma}

\makeatletter
\newcommand*{\bu}{\bm u} \newcommand*{\bU}{\bm U} \newcommand*{\bv}{\bm v}
\newcommand{\Bdry}{\partial\Omega}
\newcommand{\rr}{\mathbb{R}}
\newcommand{\norm}[3]{\lVert #1 \rVert_{#2}^{#3}}
\newcommand{\famn}[1]{\{#1\}_{j=1}^N}
\newcommand{\ldvaom}{L^2(\Omega)}
\newcommand{\ldvage}{L^2(\partial \Omega)}
\newcommand{\intom}{\int\limits_{\Omega}}
\newcommand{\diam}{\operatorname{diam}}
\newcommand{\diver}{\operatorname{div}}
\newcommand{\trace}{\operatorname{tr}}
\newcommand{\attr}{\mathcal{A}}
\newcommand{\ddt}{\frac{{\rm d}}{{\rm d}t}}
\newcommand{\calfa}{m_{\alpha}}
\newcommand{\cbeta}{M_{\beta}}
\newcommand{\dimm}[1]{\dim_{#1}^f}

%% zvyrazneni presahu sazby
\overfullrule5pt
%% čísla řádek a odkazy
%\usepackage[pagewise]{lineno}
%\usepackage[notref,notcite]{showkeys}
\usepackage{a4wide}

\title[Strong solutions and attractor dimension]{Strong solutions and attractor dimension for 2D NSE\\
with dynamic boundary conditions}

\thanks{The authors thank the Czech Science Foundation, 
Grant Number 20-11027X, for its support.}

\thanks{* Corresponding author.}

\author[D.~Pra\v{z}\'{a}k]{Dalibor Pra\v{z}\'{a}k}
%\address[D.~Pra\v{z}\'{a}k]{Corresponding author.}
\address{Charles University, Faculty of Mathematics and Physics, Department of Mathematical Analysis, Sokolovsk\'{a}~83, 186~75~Prague~8, Czech~Republic}
\email{\{prazak,zelina\}@karlin.mff.cuni.cz}

\author[M. Zelina]{Michael Zelina$^*$}
%\address{Charles University, Faculty of Mathematics and Physics, Department of Mathematical Analysis, Sokolovsk\'{a}~83, 186~75~Prague~8, Czech~Republic}
%\email{zelina@karlin.mff.cuni.cz}

\keywords{Navier-Stokes equations, dynamic boundary conditions, 
strong solution, attractor dimension}
\subjclass[2000]{76D05, 35B65, 37L30}

%\author{Dalibor Pražák\footnote{ \itshape{ Charles University, Faculty of Mathematics and Physics, Department of Mathematical Analysis, Sokolovská 83, 186 75 Prague 8, Czech Republic} \\ \itshape{E-mail address: prazak@karlin.mff.cuni.cz}} \ and Michael Zelina\footnote{\itshape{Charles University, Faculty of Mathematics and Physics, Department of Mathematical Analysis, Sokolovská 83, 186 75 Prague 8, Czech Republic} \\ \itshape{E-mail address: zelina@karlin.mff.cuni.cz}}}
\date{}

\begin{document}

\begin{abstract}
We consider incompressible Navier-Stokes equations in a bounded 2D 
domain, complete with the so-called dynamic slip boundary conditions. Assuming
that the data are regular, we show that weak solutions are strong.
As an application, we provide an explicit upper bound of the fractal
dimension of the global attractor in terms of the physical parameters.
These estimates comply with analogous results in the case of
Dirichlet boundary condition.
\end{abstract}

\maketitle

\section{Introduction}

The 2D incompressible Navier-Stokes equations are an example
of non-linear PDE, for which a rather satisfactory mathematical
theory can be developed. The global existence of a unique weak
solution is available; the solution is smooth if the
data permits. Long-time dynamics can be described by a
finite-dimensional global (or even exponential) attractor.
Its dimension can also be estimated in terms of the problem's parameters. From an extensive bibliography
let us mention the monographs Temam \cite{Te97},
Constantin and Foias \cite{CF:1988}, Robinson \cite{Rob01}.
In particular, the problem of the attractor
dimension is still an area of current research, see Ilyin et al.
\cite{ILZE:21}, \cite{IPZ:2016}.
\par
In the present paper, we aim to extend the analysis to
the case of dynamic slip boundary condition. Here the
usual NS equations are coupled with an evolutionary
problem on the boundary $\partial \Omega$:
\begin{align*}
     \beta \partial_t \bu +  \alpha \bu  + \big[ \bm S \bm n \big]_{\tau} &= \beta \bm{h}
    \\
    \bu \cdot \bm n &= 0
\end{align*}
Here $\bm{S} = \nu \bm{Du}$ is the Cauchy stress, $\nu>0$ the viscosity
of the fluid. Parameter $\alpha>0$ is related to the boundary slip; for
$\alpha=0$ we have perfect slip, while $\alpha \to + \infty$ reduces
to no-slip (zero Dirichlet) condition. The key difference is that the
boundary conditions are not enforced immediately, but only after some
relaxation time $\beta>0$. For the sake of generality, we also include
a boundary force term $\bm{h}$, conveniently multiplied by $\beta$.

%$\color{red}\beta \bm h$ Komentář, že je to pohodlnější.
\par
These problems were extensively studied in \cite{EM-dis}, see also
\cite{Maringova}, where the basic theory of weak solutions was
established, covering a rather general class of relations between
the stress tensor $\bm{S}$ and the shear rate $\bm{Du}$ both of the
polynomial type (Ladyzhenskaya fluid) and even implicit constitutive
relations. Let us also note that existence of finite-dimensional
attractors was established both for 2D and 3D Ladyzhenskaya type
fluid with dynamic slip boundary conditions in \cite{PrZe22}, \cite{PrPr23}.
\par
On the other hand, the problem with the stationary slip condition
(i.e. for $\beta=0$) was studied in \cite{AACG21};
see also \cite{AG-dis}. In particular, the $L^p$ theory for both
weak and strong solutions, as well as the existence of analytic semigroups,
was established for the linear (Stokes) problem.
\par
Our paper is organized as follows: in Section 1, we formulate the problem, and describe the details of analytical setting; in particular,
the function spaces and the weak formulation.
Here we mostly follow \cite{EM-dis}. Section 2 is devoted to
the Stokes system. Key results here consist of deriving the
maximal $W^{2,2}$ regularity, as well as $W^{2,q}$ estimates.
We crucially rely on the (stationary) regularity
results, obtained in \cite{AACG21}.
It appears that the results for $p\neq2$ are not sharp
(maximal), which is perhaps related to the fact that the problem
is not known to generate an analytic semigroup in the $L^p$
setting unless $p=2$. 
\par
In Section 3, we proceed to a non-linear system, including both the
convective term in the interior equations, and a non-linear slip
term on the boundary. We also cover certain class of non-Newtonian
fluids, where the viscosity is bounded, but otherwise depends
on time, space or even the shear rate $|\bm{Du}|$. Section 4
is devoted to estimating the attractor dimension. We use the standard 
method of Lyapunov exponents, focusing on two key steps: differentiability
of the solution semigroup (which relies on the previously obtained
strong regularity), and sharp trace estimates, employing among others
a suitable version of the Lieb-Thirring inequality.  For the reader's convenience, 
several auxiliary results are collected in the Appendix.

\par
Let us briefly mention some further possible research directions.
While the current paper focuses on the case of $\Omega$ bounded, it
would certainly be interesting to also study analogous results
for unbounded domains, regarding both regularity and attractor
dimension; cf. \cite{IPZ:2016} for the case of damped NSE
in $\rr^2$.
Second, a more general class of non-Newtonian fluids could be 
considered, in particular, the Ladyzhenskaya model with growth
exponents $r\neq2$; cf. \cite{KaPr} for the case of Dirichlet
boundary conditions.
\par
Last, but not the least, recall that in case of the 2D Navier-Stokes
equations with homogeneous Dirichlet boundary condition, the
attractor dimension satisfies $\dimm{L^2}{\attr} \le c_0 G$, 
where $G$ is the non-dimensional Grashof number. Our estimates
reduce to that for $\alpha$ large and $\beta$ small as expected.
On the other hand, it is not clear  if the estimate is optimal.
In case of free boundary, an improved estimate
(up to a logarithmic factor)  $\dimm{L^2}{\attr} \le c_0 G^{2/3}$
was shown in \cite{ilyin93}, \cite{ziane98}. It would be interesting to also recover
this as a special case, in the regime where $\alpha$, 
$\beta \to 0$ the estimate is optimal.

\subsection{Problem formulation}

Let $\Omega$ be a bounded Lipschitz domain in $\mathbb{R}^2$. We employ small boldfaced letters to denote vectors and bold capitals for tensors. The symbols ``$\cdot$" and ``$:$" stand for the scalar product of vectors and tensors, respectively. Outward unit normal vector is denoted by $\bm n$ and for any vector-valued function $\bm x : \partial \Omega \rightarrow \mathbb{R}^2$, the symbol $\bm x_\tau$ stands for the projection to the tangent plane, i.e. $\bm x_\tau = \bm x - (\bm x \cdot \bm n) \bm n$. 
\par
Standard differential operators, like gradient ($\nabla$), or divergence ($\text{div}$), are always related to the spatial variables only. By $\bm {Du}$ we understand the symmetric gradient of the velocity field, i.e. $2\bm {Du} = \nabla \bu + (\nabla \bu )^\top$. 
\par
We denote the trace of Sobolev functions as the original function, and if we want to emphasize it, we use the symbol ``tr". Generic constants, that depend just on data, are denoted by $c$ or $C$ and may vary from line to line. 
\par
Our problem is the following. Let $\bm f : (0, T) \times \Omega \rightarrow \mathbb{R}^2 $ and $\bm h : (0, T) \times \partial \Omega \rightarrow \mathbb{R}^2 $  be given external forces and  $\bm u_0 : \overline{\Omega} \rightarrow \mathbb{R}^2$ is the initial velocity. We will also use $\bm F$ as a notation for the whole couple $(\bm f, \bm h)$.  We are looking for the velocity field $\bm u : (0, T) \times \overline\Omega \rightarrow \mathbb{R}^2$ and the pressure $\pi : (0, T) \times \Omega \rightarrow \mathbb{R}$ solutions to the generalized Navier-Stokes system
\begin{align}
\partial_t \bu   - \text{div}\, \bm S ( \bm {D u} ) +  (\bu \cdot \nabla)\bu + \nabla \pi &= \bm f \quad \text{ in } (0,T) \times \Omega , \label{eq:Equation} \\
\text{div}\, \bu &= 0 \quad \text{ in } (0,T) \times \Omega ,  \label{eq:Div}
\end{align}
completed by the boundary and initial conditions
\begin{align}
\beta\partial_t \bu + [\bm s (\bu) + {\bm S ( \bm {D u} } ) \bm{n}]_{\tau} &=  \beta \bm h \quad  \text{ on } (0,T) \times \partial \Omega , \label{eq:Dynamic} \\
\bu \cdot \bm n &= 0 \quad  \text{ on } (0,T) \times \partial \Omega , \label{eq:Impermeability} \\
\bu (0) &= \bm u_0 \quad   \text{ in } \overline{\Omega} , \label{eq:Initial}
\end{align}
where $\beta > 0$ is a fixed number.

By $\bm S : \mathbb{R}^{2\times 2} \rightarrow \mathbb{R}^{2\times 2}$ we understand the viscous part of the Cauchy stress. We require there exists a non-negative potential $U \in \mathcal{C}^1 (\mathbb{R}^+)$ such that $U(0) = 0$ and 
\begin{align}
\bm S (\bm{D}) = \partial_{\bm D} U( |\bm D|^2)  = 2U'( |\bm{D}|^2) \bm{D} . \label{eq:Potential}
\end{align}
Moreover, there hold the coercivity and the growth condition with the power two, i.e. for all symmetrical $2\times 2$ matrices $\bm D$ and $\bm E$ we have the inequalities 
\begin{align}
  (\bm S(\bm D) - \bm S(\bm E)) : ( \bm D - \bm E) & \geq    c_1 |\bm D - \bm E|^2 , \label{eq:Coercivity} \\
  |\partial_{\bm D} U( |\bm D|^2)| =  |\bm S (\bm{D}) | & \leq c_2  |\bm D  | . \label{eq:Growth}
\end{align}
In \autoref{thm:FirstMainTheorem}, we also need higher derivatives of $U$. So, e.g. by $\partial_{\bm D}^2 U( |\bm D|^2)$ we understand
$$\partial_{\bm D}^2 U( |\bm D|^2) = \partial_{\bm D}  ( \partial_{\bm D} U( |\bm D|^2) )= \partial_{\bm D} \bm S (\bm{D}) =2U'( |\bm{D}|^2) \textit{Id} + 4U''( |\bm{D}|^2) \bm{D} \bm{D} .$$

Concerning the boundary term $\bm s$ we work with the similar, but a more general, situation. We consider a differentiable function $\bm s : \mathbb{R}^{2} \rightarrow \mathbb{R}^{2}$ such that $\bm s (\bm 0) = \bm 0$ and for some $s\geq 2$ satisfy for all $\bu, \bv \in \mathbb{R}^2$ the coercivity condition
\begin{align}    
(\bm s(\bu)  - \bm s(\bv) ) \cdot (\bu - \bv) \geq \alpha c_3\left( 1 + |\bu|^{s-2} + |\bv|^{s-2}  \right)|\bu - \bv|^2 ,\label{eq:CoercivityBoundary}
\end{align}
with certain $\alpha > 0$, the growth condition
\begin{align}   
|\bm s(\bu)  - \bm s(\bv) | \leq c_4\left( 1+ |\bu|^{s-2} + |\bv|^{s-2} \right)|\bu - \bv| , \label{eq:GrowthBoundary}
\end{align}
and its first derivative is controlled
\begin{align}   
\bm{s}'(\bu) \bv \cdot \bv \geq \alpha c_5 |  \bv |^2 , \, c_5 \in (0, 1) . \label{eq:DerivativeBoundary}
\end{align}

Typical examples of $\bm S$ satisfying \eqref{eq:Potential}-\eqref{eq:Growth} are 
$$ \bm S (\bm D ) = 2\nu \bm D \text{ and } \bm S (\bm D ) = 2\nu (| \bm D |^2) \bm D, $$
where $\nu$ is either a positive constant or some reasonable shear-dependent function, respectively. The corresponding potentials are
$$ U (|\bm D|^2 )  = \nu |\bm D|^2 \text{ and }   U (|\bm D|^2 ) =  \int\limits_0^{ | \bm D |^2} \nu (s) {\rm d}s\, . $$

\subsection{Main results}

The two main theorems of our article are summarized here. See 
\autoref{ss:FS} for definitions of function spaces.

\begin{theorem}[Strong solutions]\label{thm:FirstMainTheorem}
Let us consider the system \eqref{eq:Equation}--\eqref{eq:DerivativeBoundary} with $\Omega \in \mathcal{C}^{1, 1}$ and the initial condition $\bu_0 \in H$. Concerning the Cauchy stress we further suppose that 
$$U \in \mathcal{C}^3 (\mathbb{R}^+)$$
and 
\begin{align} 
\partial_{\bm D}^2 U( |\bm D|^2) \bm{E} : \bm{E}  =  \partial_{\bm D} \bm S ( \bm{D} ) \bm{E} : \bm{E}  & \geq  c_1 | \bm E |^2 , \label{eq:StressDerivative} \\
|\partial_{\bm D}^2 U( |\bm D|^2) | + |\partial_{\bm D}^3 U( |\bm D|^2) |  & \leq C,
\end{align}
hold for all symmetrical $2\times 2$ matrices $\bm D, \bm E$. Concerning the boundary non-linearity we require that
\begin{align*}
&\bm s \in \mathcal{C}^2(\mathbb{R}^2) \text{ and } \bm s', \bm s'' \text{ are bounded}.
\end{align*}
Let $ 2 < p < +\infty$ be given, we denote
$$t(p) := \frac{ 2p }{p+2}$$
and suppose
\begin{align*}
&\bm F, \partial_t \bm F, \partial_{tt} \bm F \in L^2 (0, T;  V^*), \\
&\bm f  \in L^\infty (0, T; L^p(\Omega)), \, \bm h \in L^\infty (0, T; W^{1-\frac{1}{p}, p}(\partial \Omega)), \\
&\partial_t \bm f \in L^\infty (0, T; L^{t(p)}(\Omega)), \, \partial_t \bm h \in L^\infty (0, T; W^{-\frac{1}{p}, p}(\partial \Omega)).
\end{align*}
Then there is $q > 2$ such that the unique weak solution of \eqref{eq:Equation}--\eqref{eq:DerivativeBoundary} satisfies
\begin{align*}
\bu & \in L^\infty_{\text{loc}} (0, T; W^{2,q}(\Omega) ) , \\
\pi & \in L^\infty_{\text{loc}} (0, T; W^{1,q}(\Omega) ) .
\end{align*}
\end{theorem}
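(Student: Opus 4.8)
The plan is to bootstrap regularity in time and then in space, combining the energy-type estimates for the weak solution with the maximal $W^{2,2}$ and $W^{2,q}$ regularity for the (linearized) Stokes system established in Section 2. First I would differentiate the equation in time: setting $\bv = \partial_t \bu$, the function $\bv$ satisfies a linear Stokes-type system with right-hand side $\partial_t \bm F$ minus the commutator terms coming from $\partial_t[(\bu\cdot\nabla)\bu]$, $\partial_t[\diver\bm S(\bm{Du})]$ and $\partial_t[\bm s(\bu)]$. Using the hypotheses $\bm F, \partial_t\bm F, \partial_{tt}\bm F \in L^2(0,T;V^*)$ and the boundedness of $\bm s', \bm s''$ and of $\partial^2_{\bm D}U$, $\partial^3_{\bm D}U$, together with the coercivity \eqref{eq:StressDerivative} and \eqref{eq:DerivativeBoundary}, I would perform standard Galerkin/energy estimates on the $\bv$-equation (and formally on the $\partial_{tt}$-equation) to obtain, after an arbitrarily short time, $\partial_t \bu \in L^\infty_{\text{loc}}(0,T;H)\cap L^2_{\text{loc}}(0,T;V)$ and $\partial_{tt}\bu \in L^2_{\text{loc}}(0,T;V^*)$. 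The time $t_0>0$ loss is unavoidable because $\bu_0$ is only in $H$; this is why the conclusion is stated with $L^\infty_{\text{loc}}$.

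Once $\partial_t \bu$ is controlled in $L^\infty_{\text{loc}}(0,T;H)$, the next step is to read \eqref{eq:Equation}--\eqref{eq:Impermeability} as a \emph{stationary} Stokes system for $(\bu,\pi)$ at (almost every) fixed time $t\ge t_0$:
\begin{align*}
-\diver\bm S(\bm{Du}) + \nabla\pi &= \bm f - \partial_t\bu - (\bu\cdot\nabla)\bu =: \bm g, \\
[\bm S(\bm{Du})\bm n]_\tau &= \beta\bm h - \beta\partial_t\bu - [\bm s(\bu)]_\tau =: \bm k_\tau,
\end{align*}
and apply the maximal regularity of Section 2. For the $W^{2,2}$ step I need $\bm g \in L^2(\Omega)$ and $\bm k \in W^{1/2,2}(\partial\Omega)$: since $\partial_t\bu\in H\hookrightarrow L^2$, $\bm f\in L^2$, and $(\bu\cdot\nabla)\bu\in L^2$ by the 2D Ladyzhenskaya/Agmon interpolation $\|\bu\|_4^2 \le c\|\bu\|_2\|\bu\|_{H^1}$ applied to $\bu\in V$, the interior datum is fine; on the boundary, $\partial_t\bu\in L^2(\partial\Omega)$ from the dynamic boundary condition's own energy estimate and $\bm s(\bu)$ is controlled by the growth \eqref{eq:GrowthBoundary} and the trace of $\bu\in V$, so we get $\bu\in W^{2,2}(\Omega)$, $\pi\in W^{1,2}(\Omega)$ locally in time. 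This in turn upgrades the convective term: $\bu\in W^{2,2}(\Omega)\hookrightarrow W^{1,q}(\Omega)$ for every $q<\infty$, hence $(\bu\cdot\nabla)\bu\in L^q(\Omega)$ for some $q>2$, and likewise $\bm s(\bu)$ and its trace improve.

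The final step is then the $W^{2,q}$ estimate from Section 2 applied with this improved right-hand side: $\bm g\in L^q(\Omega)$ (using $\bm f\in L^\infty(0,T;L^p)$ with $p>2$, $\partial_t\bu\in L^q$ — which follows since $\partial_t\bu\in V\hookrightarrow L^q$ after the time-regularity step — and the convective term), and $\bm k\in W^{1-1/q,q}(\partial\Omega)$ (using $\bm h\in L^\infty(0,T;W^{1-1/p,p})$, $\partial_t\bu$ on the boundary, and $\bm s(\bu)$), for a suitable $q\in(2,\min\{p,\text{(regularity threshold)}\}]$. This yields $\bu\in L^\infty_{\text{loc}}(0,T;W^{2,q}(\Omega))$ and $\pi\in L^\infty_{\text{loc}}(0,T;W^{1,q}(\Omega))$, with the time-uniformity (on $[t_0,T]$) inherited from the $L^\infty_{\text{loc}}$ bounds on $\partial_t\bu$ and the time-independence of the Stokes constants. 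The main obstacle I anticipate is the boundary bookkeeping: tracking exactly which fractional Sobolev space on $\partial\Omega$ each term ($\beta\partial_t\bu$, $[\bm s(\bu)]_\tau$, $\beta\bm h$) lands in, and verifying that the non-sharp $W^{2,q}$ regularity from Section 2 still permits some $q>2$ given the stated integrability of the data — in particular matching the exponent $t(p)=2p/(p+2)$ on $\partial_t\bm f$ with what the fixed-time Stokes estimate consumes. A secondary technical point is justifying the formal time-differentiation rigorously (via difference quotients or a Galerkin scheme), but this is routine given the coercivity assumptions.
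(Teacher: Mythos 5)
Your skeleton (differentiate in time, then read the equation at a.e.\ fixed time as a stationary Stokes problem and invoke the Section~2 estimates) is the same as the paper's, but the decisive step is missing, and it is exactly the one you defer to ``boundary bookkeeping''. The fixed-time stationary results require the boundary datum $\beta\bm h-\beta\partial_t\bu+\alpha\bu-\bm s(\bu)$ in $W^{\frac12,2}(\partial\Omega)$ for the $W^{2,2}$ step and in $W^{1-\frac1q,q}(\partial\Omega)$, $q>2$, for the final $W^{2,q}$ step. Your first bootstrap only gives $\partial_t\bu\in L^\infty_{\text{loc}}(0,T;H)\cap L^2_{\text{loc}}(0,T;V)$, so uniformly in time the trace of $\partial_t\bu$ is merely in $L^2(\partial\Omega)$; that is not enough for the $W^{2,2}$ step (the paper points this out explicitly). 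Worse, even if you upgrade to $\partial_t\bu\in L^\infty_{\text{loc}}(0,T;V)$ (which your Step~1 does not yet give and which already requires a second application of the evolutionary theory to $\bv=\partial_t\bu$), the trace then lies in $W^{\frac12,2}(\partial\Omega)$, and on the one-dimensional boundary $W^{\frac12,2}(\partial\Omega)\hookrightarrow W^{1-\frac1q,q}(\partial\Omega)$ forces $\tfrac12-\tfrac12\ge 1-\tfrac2q$, i.e.\ $q\le2$. So your chain ``$\partial_t\bu\in V\hookrightarrow L^q$'' fixes only the interior part of the right-hand side; the boundary part never reaches $W^{1-\frac1q,q}$ with $q>2$, and the final $W^{2,q}$ estimate cannot be invoked.

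Closing this gap is the actual content of the paper's proof: one must push $\partial_t\bu$ all the way to $L^\infty_{\text{loc}}(0,T;W^{1,q}(\Omega))$ with some $q>2$. This is done by treating $\bv=\partial_t\bu$ as the solution of a linearized (variable-coefficient, $\bm A=2U'(|\bm{Du}|^2)$) Stokes evolution, showing its right-hand side lies in $L^2_{\text{loc}}(0,T;H)$ to get $\bv\in L^\infty_{\text{loc}}(0,T;V)\cap L^2_{\text{loc}}(0,T;W^{1,4}(\Omega))$ and $\partial_t\bv\in L^2_{\text{loc}}(0,T;H)$, then differentiating once more in time to obtain $\partial_t\bv\in L^\infty_{\text{loc}}(0,T;H)$ --- this is precisely where $\partial_{tt}\bm F\in L^2(0,T;V^*)$, the boundedness of $\bm s''$ and of $\partial^3_{\bm D}U$ enter --- and finally applying the \emph{stationary} $W^{1,q}$ theory to the $\bv$-equation, which consumes $\partial_t\bm f\in L^\infty(0,T;L^{t(p)}(\Omega))$ and $\partial_t\bm h\in L^\infty(0,T;W^{-\frac1p,p}(\partial\Omega))$. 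Only after $\bv\in L^\infty_{\text{loc}}(0,T;W^{1,q}(\Omega))$ does the boundary datum land in $W^{1-\frac1q,q}(\partial\Omega)$ and the fixed-time $W^{2,q}$ estimate apply. The fact that your argument never uses $\partial_{tt}\bm F$, $\partial_t\bm f\in L^{t(p)}$, $\partial_t\bm h\in W^{-\frac1p,p}$, or the bound on $\bm s''$ is the symptom of this missing cascade; without it the proof does not go through.
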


\begin{remark}
The theorem also holds, after some minor modifications, for the case when $\bm S (\bm D) = \bm A (t, x)\bm D$ with symmetrical matrix $\bm A \in \mathcal{C}^2([0, T]; L^\infty (\mathbb{R}^2))$ satisfying the estimate
\begin{align*} 
 c_1 |\bm D |^2 \leq  \bm A (t, x) \bm D : \bm D \leq c_2 |\bm D |^2 , 
\end{align*}
for any symmetrical $2\times 2$ matrix $\bm D$. 
\end{remark}

\begin{remark}
In comparison with the same problem with the Dirichlet boundary condition, see \cite{Kaplicky}, we really need stronger assumptions on the first-time derivative of our data and, moreover, some mild assumption on its second-time derivatives. 
\end{remark}

\begin{theorem}[Dimension estimate]\label{thm:SecondMainTheorem}
Assume both $\bm{S}$ and $\bm{s}$ are linear:
\[
	\bm S (\bm D) = \nu \bm D , \qquad
	\bm s (\bu ) = \alpha \bu ,
\]
and the right-hand side $\bm F = (\bm f, \bm h) \in H$ is independent 
of time, where moreover
\[
	\bm f \in L^p(\Omega),\ \bm h \in W^{1-1/p,p}(\Bdry)
\]
for some $p>2$.
Then the fractal dimension of the global attractor to system \eqref{eq:Equation}--\eqref{eq:Impermeability} satisfies an estimate
\[
\dimm{H}\attr \le c_0 \cdot \frac{ \cbeta }{\calfa^{3/2}} 
\cdot \frac{\ell^2  \norm{\bm F}{H}{} }{\nu^2} \,.
\]
where 
\[
    \ell = \diam \Omega ,
    \qquad
    \calfa = \min\{ 1 , \alpha \ell/ \nu \}, 
    \qquad
    \cbeta = \max\{ 1 , \beta/\ell \} 
\]
and $c_0$ is some non-dimensional constant.
\end{theorem}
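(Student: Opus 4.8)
The plan is to run the volume--contraction (Lyapunov exponent) scheme. First I would invoke the weak theory (cf.\ \cite{PrZe22}) to get a global attractor $\attr\subset H$ for the semigroup $S(t)$ of \eqref{eq:Equation}--\eqref{eq:Impermeability}, bounded in $V$; since the present hypotheses trivially imply those of \autoref{thm:FirstMainTheorem} (a time-independent $\bm F$ lies, with all time derivatives, in $L^2(0,T;V^*)$, and $\bm f\in L^p(\Omega)$, $\bm h\in W^{1-1/p,p}(\Bdry)$ are exactly what is assumed there), the attractor is in fact bounded in $W^{2,q}(\Omega)$ for some $q>2$. This regularity is what lets one show that $S(t)$ is uniformly differentiable on $\attr$, with derivative the solution operator $\mathcal L(t)$ of the system linearised around a complete trajectory. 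By the Constantin--Foias--Temam criterion (see e.g.\ \cite{Te97}) it then suffices to prove, for every integer $m$ exceeding the claimed bound,
\[
    \limsup_{T\to\infty}\ \sup_{\bu_0\in\attr}\ \frac1T\int_0^T \operatorname{Tr}\bigl(\mathcal L(t)\circ Q_m(t)\bigr)\,\mathrm dt < 0 ,
\]
where $Q_m(t)$ is the orthogonal projection in $H$ onto the $m$-dimensional subspace transported by $\mathcal L$.

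Next I would compute the trace. Fixing $t$ and an $H$-orthonormal basis $\{\phi_j\}_{j=1}^m$ of $Q_m(t)H$ (recall the inner product of $H$ is $(\cdot,\cdot)_{\ldvaom}+\beta(\cdot,\cdot)_{\ldvage}$), testing the linearised equations around $\bu(t)$ by $\phi_j$ and using $\diver\phi_j=0$, $\phi_j\cdot\bm n=0$ gives
\[
    \operatorname{Tr}\bigl(\mathcal L(t)\circ Q_m(t)\bigr)
    = -\sum_{j=1}^m\Bigl(\nu\norm{\bm{D}\phi_j}{\ldvaom}{2}+\alpha\norm{\phi_j}{\ldvage}{2}\Bigr)
      -\sum_{j=1}^m\intom(\phi_j\cdot\nabla)\bu\cdot\phi_j .
\]
Write $\sum_j Q(\phi_j)$ for the dissipative sum and set $\rho:=\sum_{j=1}^m|\phi_j|^2$.

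The proof then rests on three estimates. First, because an $H$-orthonormal family is sub-orthonormal in $\ldvaom$, a suitable sub-orthonormal version of the Lieb--Thirring inequality (collected in the Appendix) together with Korn's inequality for fields with vanishing normal trace gives $\norm{\rho}{\ldvaom}{2}\le C\sum_j\norm{\nabla\phi_j}{\ldvaom}{2}\le C(\nu\calfa)^{-1}\sum_j Q(\phi_j)+C\ell^{-2}m$, so that estimating the trilinear sum by $\norm{\nabla\bu}{\ldvaom}{}\,\norm{\rho}{\ldvaom}{}$ and using Young's inequality absorbs $\tfrac12\sum_j Q(\phi_j)$ at the price of $C(\nu\calfa)^{-1}\norm{\nabla\bu}{\ldvaom}{2}$, modulo a term of lower order in $m$. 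Second, by the min--max principle $\sum_j Q(\phi_j)\ge\lambda_1+\dots+\lambda_m$, where $\lambda_j$ are the eigenvalues of the dynamic-slip Stokes operator on $H$, and a Weyl--Li--Yau-type lower bound yields $\lambda_1+\dots+\lambda_m\ge c\,\nu\,m^2\ell^{-2}\cbeta^{-1}$ for $m$ beyond an absolute constant; here $\cbeta$ enters through the $\beta$-weighting of the boundary part of $\|\cdot\|_H$, and it is essential (and, I expect, somewhat delicate) that no power of $\calfa$ is lost --- boundary-layer modes still carry order-one interior dissipation, so a small $\alpha$ does not make the low spectrum cheap. Third, the energy balance $\tfrac12\ddt\norm{\bu}{H}{2}+Q(\bu)=(\bm F,\bu)_H$ on $\attr$, combined with the Poincaré inequality $Q(\bu)\ge c\,\nu\calfa\,\ell^{-2}\cbeta^{-1}\norm{\bu}{H}{2}$ and Korn once more, gives $\langle\norm{\nabla\bu}{\ldvaom}{2}\rangle\le C(\nu\calfa)^{-1}\langle Q(\bu)\rangle\le C\,\ell^2\cbeta\,\calfa^{-2}\nu^{-2}\norm{\bm F}{H}{2}$.

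Combining these and averaging in time,
\[
    \Bigl\langle\operatorname{Tr}\bigl(\mathcal L\circ Q_m\bigr)\Bigr\rangle
    \le -\frac{c\,\nu}{2\,\ell^2\cbeta}\,m^2+\frac{C\,\ell^2\cbeta}{\nu^3\calfa^3}\norm{\bm F}{H}{2} ,
\]
which is strictly negative as soon as $m\ge c_0\,\cbeta\calfa^{-3/2}\,\ell^2\nu^{-2}\norm{\bm F}{H}{}$, and the CFT criterion gives the asserted bound $\dimm{H}\attr\le c_0\,\cbeta\calfa^{-3/2}\ell^2\nu^{-2}\norm{\bm F}{H}{}$. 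The main obstacle, I expect, is the spectral lower bound of the second step (with its $\calfa$-independent, $\cbeta^{-1}$ dependence on the parameters, which likely needs a Dirichlet-to-Neumann description of the boundary-layer part of the spectrum), together with the careful non-dimensional bookkeeping through the three steps: it is exactly the chain ``one factor $(\nu\calfa)^{-1}$ from the nonlinear absorption, two more from the energy estimate, one $\cbeta$ from the spectral bound'' that forces the exponent $3/2$ on $\calfa$ and the power $1$ on $\cbeta$. Finally, the degenerate case where the right-hand side of the bound is below an absolute constant --- so that $\attr$ is a single stationary point --- has to be dealt with directly.
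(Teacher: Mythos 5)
Your overall scheme -- Lyapunov exponents via \autoref{thm:Dimension}, differentiability of $S(t)$ from the $W^{2,q}$ regularity of \autoref{thm:FirstMainTheorem}, a Lieb--Thirring bound for the density $\rho$, a spectral lower bound, and time-averaged energy estimates -- is the paper's, and your bookkeeping does land on the right exponents. The genuine gap is the step you yourself flag as the main obstacle: the $\alpha$-uniform bound $\lambda_1+\dots+\lambda_m\ge c\,\nu\ell^{-2}\cbeta^{-1}m^2$ for the dynamic-slip Stokes eigenvalues is asserted, not proved, and the sketch (``Weyl--Li--Yau'', ``Dirichlet-to-Neumann description of the boundary layer'') is not an argument. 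The paper never needs any $\alpha$-independent statement about the form $\nu\norm{\bm{Du}}{\ldvaom}{2}+\alpha\norm{\bu}{\ldvage}{2}$: it first applies \eqref{malfa} (Korn) to the test family, replacing the dissipative sum by $\calfa\sum_j\norm{\bm \varphi_j}{W^{1,2}(\Omega)}{2}$ -- i.e.\ it pays the factor $\calfa$ up front -- and then compares with the $\alpha$-independent eigenvalue problem of \autoref{thm:EA2d}, whose eigenvalues dominate $\cbeta^{-1}\sigma_j$ with Steklov-type $\sigma_j\sim j$. You can afford to skip this only because you lose a compensating $\calfa$ elsewhere: you bound the trilinear sum by $\norm{\nabla\bu}{\ldvaom}{}\norm{\bm \rho}{\ldvaom}{}$ and therefore need the time average of $\norm{\nabla\bu}{\ldvaom}{2}$, costing $\cbeta\calfa^{-2}$, whereas the paper uses $(\bm \varphi_j\cdot\nabla)\bu\cdot\bm \varphi_j=\bm{Du}:\bm \varphi_j\otimes\bm \varphi_j$ and only needs $B_1\le\cbeta\calfa^{-1}\norm{\bm F}{H}{2}$ straight from the energy equality, see \eqref{est-B1}. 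The two choices cancel exactly, so your delicate spectral step buys nothing as executed; conversely, if it were available, combining it with the symmetrized trilinear bound would give $\calfa^{-1}$ in place of $\calfa^{-3/2}$, a strictly stronger theorem -- which shows the claim carries real content and cannot be waved through. (It is in fact provable: monotonicity in $\alpha$ reduces it to perfect slip, then Korn's second inequality modulo the finite-dimensional space of tangential rigid motions plus the same Steklov comparison; but none of this is in your text.)

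Two further points. First, your final display silently drops the lower-order term created by the Lieb--Thirring absorption: \autoref{thm:LT2} carries the zeroth-order part $\sum_j\norm{\bm \varphi_j}{\ldvaom}{2}\le m$, so after Young's inequality you retain an extra $+c\,\nu\calfa\,\ell^{-2}m$, which is dominated by $\tfrac{c\nu}{2\ell^2\cbeta}m^2$ only for $m\gtrsim\calfa\cbeta$; at your threshold $m\sim\cbeta\calfa^{-3/2}\norm{\bm F}{H}{}$ this requires (in the rescaled variables) $\norm{\bm F}{H}{}\gtrsim\calfa^{5/2}$, and for smaller forcing the argument as written only yields $\dimm{H}\attr\lesssim\calfa\cbeta$, which can exceed the claimed bound; this regime is not covered by your closing remark about a single stationary point. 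The paper has no such leftover because the Lieb--Thirring right-hand side $\kappa\sum_j\norm{\bm \varphi_j}{W^{1,2}(\Omega)}{2}$ is absorbed wholesale into the dissipative term $\calfa\sum_j\norm{\bm \varphi_j}{W^{1,2}(\Omega)}{2}$. Second, the uniform quasidifferentiability of $S(t)$ on the absorbing set is not a citation-level fact here: it is proved in Section 4.1, and that proof is precisely where the $W^{2,q}$ regularity, the continuous-dependence estimates \eqref{eq:ControlOfDifference1}--\eqref{eq:ControlOfDifference2} and the interpolation inequalities of the Appendix are consumed; asserting it in one sentence leaves the analytically heaviest part of the proof of \autoref{thm:SecondMainTheorem} unaccounted for.
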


\subsection{Function spaces} 	\label{ss:FS}

For a Banach space $X$ over $\mathbb{R}$, its dual is denoted by $X^*$ and $\langle x^*, x \rangle_X$ is the duality pairing. For $p \in [1, \infty]$ we denote $(L^p(\Omega), || \cdot ||_{L^p(\Omega)})$ and $(W^{1,p} (\Omega), || \cdot ||_{W^{1,p}(\Omega)})$ the Lebesgue and Sobolev spaces with corresponding norms. We often write just $|| \cdot ||_p$ or $|| \cdot ||_{1,p}$. The space of functions $\bu:[0,T]\to X$ which are $L^p$ integrable or (weakly) continuous with respect to time is denoted by $L^p(0,T; X)$,  $\mathcal{C}([0,T]; X)$  or $\mathcal{C}_{\rm w}([0,T]; X)$ respectively.

Because of the presence of the time derivative on the boundary, we need to pay close attention to the boundary terms. Thus, we need more refined function spaces. We will follow the notation of \cite{Maringova}. We introduce the spaces 
\begin{align*}
\mathcal{V} &:= \{ ( \bu , \bm g) \in \mathcal{C}^{0,1} (\overline{\Omega}) \times \mathcal{C}^{0,1}  (\partial \Omega); \text{div}\, \bu = 0 \text{ in } \Omega, \bu \cdot \bm n = 0 \text{ and } \bu = \bm g \text{ on } \partial \Omega    \} , \\
V &:= \operatorname{cl}(\mathcal{V},V), \text{ where }
|| ( \bu , \bm g) ||_{V} := || \bm {Du} ||_{L^2(\Omega)}  + \alpha ||\bm g ||_{L^2(\partial \Omega)} ,  \\
H &:= \operatorname{cl}(\mathcal{V},H), \text{ where }
|| ( \bu , \bm g) ||_H^2 := || \bu ||_{L^2( \Omega)}^2 + \beta || \bm g ||_{L^2(\partial \Omega)}^2.
%V &:= \overline{\mathcal{V}}^{|| \cdot ||_{V}}, \text{ where } || ( \bu , \bm g) ||_{V} := || \bm {Du} ||_{L^2(\Omega)}  + \alpha ||\bm g ||_{L^2(\partial \Omega)} ,\\
%H &:= \overline{V}^{|| \cdot ||_H}, \text{ where } 
%|| ( \bu , \bm g) ||_H^2 := || \bu ||_{L^2( \Omega)}^2 + \beta || \bm g ||_{L^2(\partial \Omega)}^2.
\end{align*}

Space $V$ is both reflexive and separable. Observe that thanks to Korn's inequality (see \autoref{thm:Korn} in Appendix), the norm in $V$ is equivalent to the standard $W^{1,2}$ norm. Next, $H$ is Hilbert space identified with its own dual $H^*$, endowed with the inner product
$$ ( (\tilde{\bu}, \tilde{\bm g}), ( \bu, \bm g))_H := \int\limits_\Omega \tilde{\bu} \cdot  \bu  \, {\rm d}x + \beta \int\limits_{\partial \Omega} \tilde{\bm g}  \cdot  \bm g \, {\rm d}S . $$ 

The duality pairing between $V$ and $V^*$ is defined in a standard way as a continuous extension of the inner product $(\cdot, \cdot)_H$ on $H$. Note that there is a Gelfand triplet
$$ V \hookrightarrow H \equiv H^* \hookrightarrow V^* ,$$
where both embeddings are continuous and dense.
\par
It will be useful (and certainly is of independent interest) to have
intrinsic description of the above spaces. Let us denote
\begin{align*}
	W^{1,p}_{\sigma,\bm n}(\Omega) &= \{ \bu \in W^{1,p}(\Omega);
	\ \textrm{ $\diver \bu =0$ in $\Omega$,
                    $\bu\cdot \bm{n} = 0$ on $\Bdry$} \},
\\
	L^p_{\sigma,\bm n} &= \operatorname{cl}(\mathcal V, L^p(\Omega \times \Bdry))
	\, \textrm{with }\norm{(\bu,\bm g)}{L^p(\Omega \times \Bdry)}{}
		= \norm{\bu}{L^p(\Omega)}{} + \norm{\bm g}{L^p(\Bdry)}{},
\\
	L^p_{\sigma,\bm n}(\Omega) &= \{ \bu \in L^p(\Omega);\ 
		\textrm{ $\diver \bu =0$ in $\Omega$, 
					$\bu\cdot \bm{n} = 0$ on $\Bdry$} \},
\\
	L^p_{\tau}(\Bdry) &= \{ \bm g \in L^p(\Bdry);\ 
	\bm g \cdot \bm{n} = 0 \}.
\end{align*}
Note that $L^2_{\sigma,\bm n} = H$; and the normal trace in this space
is well-defined, cf. \cite[Section 10.3.]{FN09}. Now, 
it is not difficult to see (by an argument similar to the lemma below) 
that
\[
	V = \{ (\bu,\trace{\bu})\,; \bu \in W^{1,2}_{\sigma,\bm n}(\Omega) \}.
\]
Furthermore, if $\rho\ge 1$ is such that 
$\trace : W^{1,2}(\Omega) \to L^\rho(\Bdry)$, then
$V \hookrightarrow W^{1,p}_{\sigma,\bm n}(\Omega) \times
L^\rho_{\tau}(\Bdry)$, and hence also $( W^{1,p}_{\sigma,\bm n}(\Omega))^* 
\times L^{\rho'}_{\tau}(\Bdry) \hookrightarrow V^*$.
Finally, we claim that in the class of $L^p$ functions, the
interior and boundary values decouple as well.
\begin{lemma}	\label{lem:Decouple}
Let $\Omega \subset \rr^d$ be a bounded $\mathcal{C}^{1,1}$ domain. Then
for any $p\in(1,+\infty)$ one has
\[
	L^p_{\sigma,\bm n} 
		= L^p_{\sigma,\bm n}(\Omega) \times L^p_\tau(\Bdry) \,.
\]
\end{lemma}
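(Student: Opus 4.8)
The strategy is to prove the two inclusions separately. The inclusion ``$\subseteq$'' is immediate: clearly $\mathcal V\subseteq L^p_{\sigma,\bm n}(\Omega)\times L^p_\tau(\Bdry)$, and the right-hand side is a closed subspace of $L^p(\Omega\times\Bdry)$ — indeed $\diver\colon L^p(\Omega)\to W^{-1,p}(\Omega)$ is continuous, the normal trace is continuous on divergence-free $L^p$ fields into $W^{-1/p,p}(\Bdry)$ (cf.\ \cite[Section~10.3]{FN09}), and $\bm g\mapsto\bm g\cdot\bm n$ is continuous on $L^p(\Bdry)$ since $\bm n\in L^\infty(\Bdry)$ — so it contains $\operatorname{cl}(\mathcal V,L^p(\Omega\times\Bdry))=L^p_{\sigma,\bm n}$. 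All the content is in the reverse inclusion, which says that the interior and the boundary component may be approximated \emph{independently}: morally this works because the trace operator is discontinuous from $L^p(\Omega)$ to $L^p(\Bdry)$, so altering a field inside a thin collar of $\Bdry$ changes its boundary trace at will while costing arbitrarily little in $L^p(\Omega)$.

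To make this precise, fix $(\bu,\bm g)\in L^p_{\sigma,\bm n}(\Omega)\times L^p_\tau(\Bdry)$. Using the standard $L^p$-density of divergence-free $\mathcal C^\infty_c(\Omega)$ fields in $L^p_{\sigma,\bm n}(\Omega)$ (a consequence of the $L^p$-Helmholtz decomposition; cf.\ \cite{FN09}) and the density in $L^p_\tau(\Bdry)$ of sufficiently regular tangent fields, it is enough to approximate a couple $(\bv,\bm w)$ with $\bv\in\mathcal C^\infty_c(\Omega)$ divergence-free (so $\trace\bv=0$) and $\bm w$ tangent and as smooth as $\Bdry$ allows. For such data the claim reduces to producing, for every small $\delta>0$, a field $\bm z_\delta\in\mathcal C^{0,1}(\overline\Omega)$ which is divergence-free, supported in the collar $\Omega_\delta=\{x\in\Omega:\operatorname{dist}(x,\Bdry)<\delta\}$, tangent on $\Bdry$ with $\trace\bm z_\delta=\bm w$, and satisfies $\norm{\bm z_\delta}{L^p(\Omega)}{}\le C\,\delta^{1/p}$. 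Then $\bu_\delta:=\bv+\bm z_\delta$ satisfies $(\bu_\delta,\bm w)\in\mathcal V$ and $\bu_\delta\to\bv$ in $L^p(\Omega)$ as $\delta\to0$, and diagonalising over the three approximation parameters gives $(\bu,\bm g)\in\operatorname{cl}(\mathcal V,L^p(\Omega\times\Bdry))=L^p_{\sigma,\bm n}$.

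The construction of $\bm z_\delta$ is the main point, and I would carry out the case $d=2$ explicitly. Write $\bm w=w\,\bm\tau$ with $\bm\tau$ the unit tangent and $w\in\mathcal C^{1,1}(\Bdry)$, extend $w$ to $\tilde w\in\mathcal C^{1,1}(\overline\Omega)$, recall that for $\Omega\in\mathcal C^{1,1}$ the distance function $t:=\operatorname{dist}(\cdot,\Bdry)$ is $\mathcal C^{1,1}$ in a fixed collar with $|\nabla t|=1$ there, fix $\eta\in\mathcal C^\infty([0,\infty))$ with $\eta(0)=1$ and $\eta\equiv0$ on $[1,\infty)$, and set
\[
	\bm z_\delta:=\nabla^{\perp}\!\bigl( t\,\eta(t/\delta)\,\tilde w \bigr).
\]
This is divergence-free; its stream function lies in $\mathcal C^{1,1}(\overline\Omega)$ (it vanishes beyond $\{t=\delta\}$ together with its gradient), hence $\bm z_\delta\in\mathcal C^{0,1}(\overline\Omega)$ and is supported in $\overline{\Omega_\delta}$; on $\Bdry$ the tangential derivative of the stream function vanishes, giving $\bm z_\delta\cdot\bm n=0$, while its normal derivative equals $w$ up to a sign that one absorbs into $\eta$, giving $\bm z_\delta\cdot\bm\tau=w$; finally $\nabla\bigl(t\,\eta(t/\delta)\,\tilde w\bigr)$ is bounded uniformly in $\delta$ (the factor $\eta(t/\delta)+(t/\delta)\eta'(t/\delta)$ is bounded on the support $\{t\le\delta\}$, and the remaining term carries a factor $t=O(\delta)$), whence $\norm{\bm z_\delta}{L^p(\Omega)}{}=\norm{\bm z_\delta}{L^p(\Omega_\delta)}{}\le C\,|\Omega_\delta|^{1/p}\le C\,\delta^{1/p}$. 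For general $d$ the same scheme applies with $\nabla^\perp$ replaced by the divergence of a skew-symmetric tensor potential: one localises $\bm w$ by a partition of unity, flattens $\Bdry$ in finitely many $\mathcal C^{1,1}$ boundary charts where the construction is explicit, and pulls the local pieces back by the Piola transform, which preserves both the divergence-free condition and the tangential trace. The two delicate points are the regularity bookkeeping — $\Omega\in\mathcal C^{1,1}$ makes the distance function and the chart maps $\mathcal C^{1,1}$, and every composition stays Lipschitz, so $\bm z_\delta$ genuinely belongs to $\mathcal C^{0,1}(\overline\Omega)$ — and the $\delta$-uniform bound on the gradient of the potential, which is precisely what makes $\bm z_\delta$ negligible in $L^p(\Omega)$ as $\delta\to0$.
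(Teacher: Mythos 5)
Your proposal is correct in substance, but it takes a genuinely different route from the paper at the key step. Both proofs reduce the problem to showing that a boundary datum $\bm g \in L^p_\tau(\Bdry)$ can be matched by a solenoidal, tangential field that is small in $L^p(\Omega)$ (you phrase this as adding a collar-supported corrector $\bm z_\delta$ to a compactly supported $\bv$; the paper proves the two ``axis'' inclusions $L^p_{\sigma,\bm n}(\Omega)\times\{0\}\subset L^p_{\sigma,\bm n}$ and $\{0\}\times L^p_\tau(\Bdry)\subset L^p_{\sigma,\bm n}$ and adds them). Where you differ is in how solenoidality of the extension is achieved: the paper takes an arbitrary smooth extension $\bu^{(1)}$ of $\bm g$ with small $L^p(\Omega)$ norm and then removes its divergence with the Bogovskii operator, using $\bu^{(1)}\cdot\bm n=0$ to get the compatibility $\int_\Omega \diver\bu^{(1)}=0$ and the negative-norm estimate $\norm{\mathcal B[\diver\bu^{(1)}]}{L^p(\Omega)}{}\le C\norm{\bu^{(1)}}{L^p(\Omega)}{}$; you instead build an exactly divergence-free extension from a stream function $t\,\eta(t/\delta)\,\tilde w$ supported in a $\delta$-collar, with the uniform gradient bound giving $\norm{\bm z_\delta}{L^p}{}\lesssim\delta^{1/p}$. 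Your construction is more elementary and self-contained in $d=2$ (no Bogovskii theory needed, and it exhibits explicitly why the trace can be prescribed at negligible $L^p$ cost), while the paper's argument is shorter and dimension-independent; your extension to general $d$ via skew tensor potentials, boundary charts and the Piola transform would require noticeably more work than the one-line citation of \cite[Theorem 10.11]{FN09}.

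Two small points to tighten. First, on a $\mathcal C^{1,1}$ boundary the unit tangent $\bm\tau$ (and $\bm n$) is only Lipschitz, and the nearest-point projection is only Lipschitz as well, so ``extend $w$ to $\tilde w\in\mathcal C^{1,1}(\overline\Omega)$'' needs a word: the cleanest fix is to run the density step with $w$ equal to the restriction to $\Bdry$ of a function in $\mathcal C^\infty(\rr^2)$ (such restrictions are dense in $L^p(\Bdry)$), take that global function as $\tilde w$, and only then multiply by $\bm\tau$; the resulting $\psi$ is $\mathcal C^{1,1}$ for each fixed $\delta$ and $\bm z_\delta=\nabla^\perp\psi$ is Lipschitz, as you need for membership in $\mathcal V$. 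Second, note that the Lipschitz constant of $\bm z_\delta$ may blow up as $\delta\to0$; this is harmless, since only the $\delta$-uniform sup bound on $|\bm z_\delta|$ enters the $L^p$ estimate, but it is worth saying explicitly.
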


\begin{proof}
First inclusion $\subset$ is obvious. To prove the second one,
we will establish that
\begin{align}	\label{con1}
	L^p_{\sigma,\bm n}(\Omega) \times \{0\} &\subset L^p_{\sigma,\bm n} ,
\\				\label{con2}
	\{0\} \times  L^p_{\tau}(\Bdry) & \subset L^p_{\sigma,\bm n} .
\end{align}
Inclusion \eqref{con1} is also clear since the space 
\[
	\mathcal{D}(\Omega) = \{ \bu \in \mathcal{C}^{\infty}_0(\Omega);\ 
				\diver \bu = 0 \}
\]
is dense in $L^p_{\sigma,\bm n}(\Omega)$, see e.g. \cite[Theorem III.2.3]{Galdi11}.
It remains to prove \eqref{con2}, i.e. for a given $\bm g \in L^p_{\tau}(\Bdry)$ we need to find smooth extension $\bu$ such that both $\norm{\bu - \bm g}{L^p(\Bdry)}{}$ and $\norm{\bu}{L^p(\Omega)}{}$ are small.
\par
Since $\Bdry$ is regular, we can assume that $\bm g$ is $\mathcal{C}^1$. Let $\bu^{(1)}$ be its smooth extension such that $\norm{\bu^{(1)}}{L^p(\Omega)}{} < \varepsilon$. To ensure the solenoidality, we finally set
\[
	\bu = \bu^{(1)} - \bu^{(2)}, \quad \textrm{where }
		\bu^{(2)} = \mathcal{B}[\diver \bu^{(1)}],
\]
$\mathcal B$ being the Bogovskii operator; see \cite[Section 10.5]{FN09}
for details. In particular, since $\bu^{(1)}\cdot \bm n 
= \bm g^{(1)} \cdot \bm n = 0$ on $\Bdry$, we have 
$\int_\Omega \diver \bu^{(1)} \, {\rm d}x =0$. It follows from \cite[Theorem 10.11]{FN09} that $\bu^{(2)} \in W^{1,p}_0(\Omega)$ and
\[
\norm{\bu^{(2)}}{L^p(\Omega)}{} 
	\le C \norm{\bu^{(1)}}{L^p(\Omega)}{} \le C \varepsilon ,
\]
where $C>0$ only depends on $\Omega$ and $p$.
Hence $\bu$ is the sought-for interior extension to $\bm g$.
\qed \end{proof}

\begin{remark}
It is worth noting that we will not actually need a full strength of \autoref{lem:Decouple}; rather just a very special case. Let us consider a couple $(\bu, \bm g)$ such that $\bu \in L^2_{\sigma,\bm n}(\Omega)$ and $\bm g$ is a trace of $\bv \in W^{1,2}_{\sigma,\bm n}(\Omega)$. Then $(\bv, \bm g) \in V$, which we observed above, and $(\bu - \bv, \bm 0) \in H$ by \eqref{con1}. Therefore  $(\bu, \bm g) \in H$. It works similarly also for  $\bu \in (W^{1,2}_{\sigma,\bm n}(\Omega))^*$, we would obtain $(\bu, \bm g) \in V^*$. Let us also remark that the whole argument needs $\Omega$ to be just a Lipschitz domain.
\end{remark}

%%%% vyhodit nize uvedene %%%
\iffalse
\begin{remark}
Due to \eqref{eq:CoercivityBoundary} we could also work with the norm  $|| ( \bu , \bm g) ||_{V_s} := || \bm {Du} ||_{L^2(\Omega)}  + \alpha ||\bm g ||_{L^s(\partial \Omega)}$. In our work, it is not really important.
\end{remark}
Next, we need to specify how to create the duality pairing for objects defined only inside of $\Omega$. So, if $\bm f \in \left(W^{1, 2}_{\bm n} (\Omega)\right)^*$, we identify it with a couple $(\bm f , \bm 0) \in V^*$ and we observe $\langle \bm f, \bm \varphi \rangle_{V} = \langle \bm f, \bm \varphi \rangle_{W^{1, 2}_{\bm n} (\Omega)}$. In particular, if $\bm f \in L^2(\Omega)$ then the definiton means that $\langle \bm f, \bm \varphi \rangle_{L^2(\Omega)}  = \int\limits_{\Omega} \bm f \cdot \bm \varphi \, {\rm d}x$.
Conversely, let us consider the case when $\bm h \in L^2(\partial \Omega)$ is given. Can we find $\bm f$ such that $(\bm f, \bm h) \in H$? Of course, if $\bm h = \text{tr}\, \bm f$ for some $\bm f \in W^{1, 2}(\Omega)$, then $(\bm f, \bm h) \in V$ by the definition and thus also $(\bm f, \bm h) \in H$. The general situation is not so obvious, but even in this case, we can find $\bm f$ such that $(\bm f, \bm h) \in H$. 
\fi

\subsection{Weak formulation}

Here, we formally derive the proper notion of a weak solution. We take a scalar product of \eqref{eq:Equation} with the smooth test function $\bm \varphi \in \mathcal V$, integrate over the whole $\Omega$ and use Gauss's theorem to get
\begin{align*}
\int\limits_{\Omega} \partial_t \bu \cdot \bm \varphi + & \int\limits_{\Omega} (\bu \cdot \nabla ) \bu \cdot \bm \varphi + \int\limits_{\Omega} \bm S ( \bm{Du})  : \nabla \bm{\varphi} - \int\limits_{\partial \Omega} \left[\bm S ( \bm{Du}) \bm n \right]_\tau   \cdot \bm{\varphi}  \\
& \qquad = \int\limits_{\Omega} \bm f \cdot \bm \varphi  - \int\limits_{\Omega} \pi \, \text{div}\, \bm \varphi + \int\limits_{\partial \Omega} \pi \, \bm n \cdot \bm \varphi  .
\end{align*}
The pressure terms vanish due to $\text{div}\, \bm \varphi = 0$. Similarly, the tangential projection of boundary terms can be dropped as $\bm \varphi \cdot \bm n = 0$ on $\Bdry$; we follow this convention from now on.
Together with symmetricity of $\bm S ( \bm{Du})$ we obtain
\begin{align*}
\int\limits_{\Omega} \partial_t \bu \cdot \bm \varphi  + \int\limits_{\Omega} \bm S ( \bm{Du})  : \bm{D \varphi} - \int\limits_{\partial \Omega} \left[\bm S ( \bm{Du}) \bm n \right]_\tau   \cdot \bm{\varphi}   = \int\limits_{\Omega} \bm f \cdot \bm \varphi  - \int\limits_{\Omega} (\bu \cdot \nabla ) \bu \cdot \bm \varphi .
\end{align*}
Next, we use \eqref{eq:Dynamic} to finally get
\begin{align*}
\int\limits_{\Omega} \partial_t \bu \cdot \bm \varphi & + \beta  \int\limits_{\partial \Omega} \partial_t \bu \cdot \bm \varphi + \int\limits_{\Omega} \bm S ( \bm{Du})  : \bm{D \varphi} + \int\limits_{\partial \Omega} \bm s(\bu) \cdot \bm \varphi \\
& = \int\limits_{\Omega} \bm f \cdot \bm \varphi + \beta \int\limits_{\partial \Omega} \bm h \cdot \bm \varphi - \int\limits_{\Omega} (\bu \cdot \nabla ) \bu \cdot \bm \varphi , 
\end{align*}
which we rewrite as
\begin{align*}
( \partial_t \bu , \bm \varphi )_H + & \int\limits_{\Omega} \bm S ( \bm{Du})  : \bm{D \varphi} + \int\limits_{\partial \Omega} \bm s(\bu) \cdot \bm \varphi  = ( \bm F , \bm \varphi )_H - \int\limits_{\Omega} (\bu \cdot \nabla ) \bu \cdot \bm \varphi .
\end{align*}

Of course, rigorously, the scalar product must be replaced by the duality pairing. From this point, it is not difficult to realize that we are able to get the usual apriori estimates for $\bu$ and $\partial_t \bu$. Hence, we introduce the following definition.

\begin{definition}
By a weak solution of \eqref{eq:Equation}--\eqref{eq:Initial} we understand the function 
\begin{align*}
\bu & \in L^2(0, T; V) \cap \mathcal{C}([0, T]; H) \text{ and } \\
\partial_t \bu & \in L^{2}(0, T; V^*) 
\end{align*}
that for a.e. $t \in (0, T)$ and any $\bm \varphi \in V$ satisfies the identity
\begin{align} \label{eq:WF}
\langle \partial_t \bu, \bm \varphi \rangle + \int\limits_{\Omega} \bm S ( \bm{Du})  : \bm{D \varphi} +  \int\limits_{\partial \Omega} \bm s (\bu) \cdot \bm \varphi = \langle \bm F, \bm \varphi \rangle - \int\limits_{\Omega} (\bu \cdot \nabla)\bu \cdot \bm \varphi,
\end{align}
the initial condition $\bu(0)=\bu_0$ holds in $H$,
%$$ \lim_{t \rightarrow 0_+} || \bu (t) - \bu_0||_H = 0 $$
and for all $t \in [0, T]$ it satisfies the energy equality
$$ \frac{1}{2} || \bu (t) ||_H^2 +  \int\limits_0^t \int\limits_{\Omega} \bm S ( \bm{Du})  : \bm{Du} + \int\limits_0^t \int\limits_{\partial \Omega}  \bm s (\bu) \cdot \bu  =  \frac{1}{2} || \bu_0 ||_H^2 + \int\limits_0^t \langle \bm F, \bu \rangle . $$
\end{definition}

\subsection{Dynamical systems}

We recall some basic notions from the theory of dynamical systems. 
Let $\mathcal{X}$ be (a closed subset to) a normed space. Family of mappings $\{ \Sigma_t \}_{t \geq 0} : \mathcal{X} \rightarrow \mathcal{X}$ is called a semigroup provided that $\Sigma_0 = I$ and $ \Sigma_{t+s} = \Sigma_t \Sigma_s$  for all
$s$, $t\ge0$. Requiring also continuity of the map $(t,x)\mapsto \Sigma_t x$,
the couple $(\Sigma_t, \mathcal{X})$ is referred to as a dynamical system. 
\par
Set $\mathcal{A} \subset \mathcal{X}$ is called a global attractor to the dynamical system $(\Sigma_t, \mathcal{X})$ if 
\begin{itemize}
\item[(i)] $\mathcal{A}$ is compact in $\mathcal{X}$,
\item[(ii)] $\Sigma_t \mathcal{A} = \mathcal{A}$ for all $t \geq  0$ and
\item[(iii)] for any bounded $\mathcal{B} \subset \mathcal{X}$ there holds 
$$ \text{dist} (\Sigma_t \mathcal{B}, \mathcal{A}) \rightarrow 0 \text{ as } t \rightarrow \infty,  $$ 
where $\text{dist} (\mathcal{B}, \mathcal{A})$ is the standard Hausdorff semi-distance of the set $\mathcal{B}$ from the set $\mathcal{A}$, defined as $\text{dist} (\mathcal{B}, \mathcal{A}) = \sup_{a \in \mathcal{A}} \inf_{b\in \mathcal{B}} || b - a||_ \mathcal{X}$.
\end{itemize}
Let us note that a dynamical system can have at most one global attractor. The condition (ii) says that the global attractor is (fully) invariant with respect to $\Sigma_t$.
\par
Fractal dimension of a compact set $\mathcal{K} \subset \mathcal{X}$ is defined by
$$ \dimm{\mathcal{X}} \mathcal{K} := \limsup_{\varepsilon \rightarrow 0_+} \frac{\log N_\varepsilon^{\mathcal{X}} (\mathcal{K})}{-\log \varepsilon} \, , $$
where $N_\varepsilon^{\mathcal{X}} (\mathcal{K})$  denotes the minimal number of $\varepsilon$-balls needed to cover the set $\mathcal{K}$. See e.g.
\cite{Rob11} for further properties as well as related results.

\section{Stokes system}

Let us start with the basic properties of the Stokes operator, corresponding
to the dynamic boundary conditions. Here we mostly follow the results
of \cite{Maringova}, \cite{BMR2007} as well as  \cite{AG-dis}, \cite{AACG21}.

\subsection{Eigenvalue problem - ON basis}

\begin{theorem}[Basis of $V$]\label{thm:Basis}
There exists the sequence $\{ \bm \omega_k \}_{k\in \mathbb{N}}$ which is a basis in both $V$ and $H$, it is orthogonal in $V$ and orthonormal in $H$. Further, there is a non-decreasing sequence $\{\mu_k\}_{k\in \mathbb{N}}$ with $\lim_{k \rightarrow +\infty} \mu_k = +\infty$. For every $k \in \mathbb{N}$ the function $\bm \omega_k$ solves the problem
\begin{align}
- \text{div}\, \bm{D \omega}_k + \nabla \pi &=  \mu_k \bm \omega_k \quad \text{ in } \Omega   \label{eq:Basis1},\\
\text{div}\, \bm \omega_k  &= 0  \quad  \text{ in }  \Omega  \label{eq:Basis2},\\
 \alpha \bm \omega_k + [(\bm{D\omega}_k) \bm n]_\tau  &= \mu_k \beta \bm \omega_k \quad  \text{ on } \partial \Omega \label{eq:Basis3},\\
 \bm \omega_k \cdot \bm n &= 0 \quad  \text{ on } \partial \Omega \label{eq:Basis4}
\end{align} 
in the weak sense. Equivalently, the equations can be written as
\begin{align}
(\bm \omega_k, \bm \varphi)_{V} = \mu_k (\bm \omega_k, \bm \varphi)_H ,\, \forall \bm \varphi \in V \label{eq:VlastniCisla}.
\end{align}
Moreover, for $P^N$, a projection of $V$ to the linear hull of $\{ \bm \omega_k \}_{k = 1}^N$ defined by
$$ P^N \bu := \sum_{k = 1}^N (\bu, \bm \omega_k)_H \bm \omega_k , $$
it holds that for any $\bu \in V$
\begin{align*}
||P^N \bu ||_H & \leq || \bu ||_H, \\
||P^N \bu ||_V & \leq || \bu ||_V, \\
P^N \bu & \rightarrow \bu \text{ in } V \text{ as } N \rightarrow +\infty .
\end{align*}
\end{theorem}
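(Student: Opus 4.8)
The plan is to apply the standard spectral theory for a compact, self-adjoint, positive operator on the Hilbert space $H$, where the relevant operator is essentially the inverse of the Stokes operator with dynamic boundary conditions. First I would set up the bilinear form $a(\bu,\bv) := (\bu,\bv)_V = \int_\Omega \bm{Du}:\bm{Dv} + \alpha\int_{\Bdry}\bu\cdot\bv$, which is bounded and coercive on $V$ by Korn's inequality (\autoref{thm:Korn}); for each $\bm f\in H$ the Lax--Milgram lemma produces a unique $\bu = T\bm f \in V$ with $a(\bu,\bv) = (\bm f,\bv)_H$ for all $\bv\in V$. The operator $T:H\to H$ is bounded, and since $V\hookrightarrow H$ is compact (the embedding $W^{1,2}(\Omega)\hookrightarrow L^2(\Omega)$ is compact and the trace operator $W^{1,2}(\Omega)\to L^2(\Bdry)$ is compact, so the product embedding $V\hookrightarrow H$ is compact via the intrinsic description of $V$ and $H$), $T$ is a compact operator on $H$. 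Symmetry of $a$ gives self-adjointness of $T$, and $a(T\bm f,T\bm f) = (\bm f, T\bm f)_H$ together with coercivity gives positivity; hence $T$ is injective as well.

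Next I would invoke the Hilbert--Schmidt spectral theorem: there is an orthonormal basis $\{\bm\omega_k\}_{k\in\mathbb N}$ of $H$ consisting of eigenfunctions of $T$, with eigenvalues $\lambda_k>0$, $\lambda_k\to 0$. Setting $\mu_k := 1/\lambda_k$ gives a non-decreasing (after reordering) sequence with $\mu_k\to+\infty$, and $\bm\omega_k = \lambda_k^{-1}T\bm\omega_k$ shows $\bm\omega_k\in V$ and $a(\bm\omega_k,\bv) = \mu_k(\bm\omega_k,\bv)_H$ for all $\bv\in V$, which is precisely \eqref{eq:VlastniCisla}; unwinding the definition of $a$ and integrating by parts (using that the $\bm\omega_k$ are the test functions in the weak formulation) recovers the PDE system \eqref{eq:Basis1}--\eqref{eq:Basis4}. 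Orthogonality in $V$ follows from $a(\bm\omega_j,\bm\omega_k) = \mu_k(\bm\omega_j,\bm\omega_k)_H = \mu_k\delta_{jk}$, so in fact $\|\bm\omega_k\|_V^2 = \mu_k$. That $\{\bm\omega_k\}$ is also complete in $V$ follows because its $H$-orthogonal complement in $V$ is trivial: if $\bv\in V$ is $H$-orthogonal to all $\bm\omega_k$ then it is $H$-orthogonal to the dense set $\operatorname{span}\{\bm\omega_k\}$, hence $\bv=0$; combined with $V$-orthogonality this gives a genuine $V$-basis.

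For the projection estimates, write $\bu = \sum_k c_k\bm\omega_k$ with $c_k = (\bu,\bm\omega_k)_H$; then $\|P^N\bu\|_H^2 = \sum_{k\le N}c_k^2 \le \sum_k c_k^2 = \|\bu\|_H^2$. For the $V$-bound, the key observation is that $\{\bm\omega_k/\sqrt{\mu_k}\}$ is orthonormal in $V$ and that for $\bu\in V$ the $V$-expansion coefficients agree with the $H$-coefficients scaled correctly: from \eqref{eq:VlastniCisla}, $(\bu,\bm\omega_k)_V = \mu_k(\bu,\bm\omega_k)_H = \mu_k c_k$, so $\|\bu\|_V^2 = \sum_k \mu_k c_k^2$ (Parseval in $V$) and $\|P^N\bu\|_V^2 = \sum_{k\le N}\mu_k c_k^2 \le \|\bu\|_V^2$, with $\|\bu - P^N\bu\|_V^2 = \sum_{k>N}\mu_k c_k^2\to 0$ giving $V$-convergence. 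The one point requiring a little care — and the main (mild) obstacle — is justifying that $P^N$ as defined by the $H$-inner product genuinely equals the $V$-orthogonal projection onto $\operatorname{span}\{\bm\omega_1,\dots,\bm\omega_N\}$ and that the series $\sum_k\mu_k c_k^2$ is finite exactly when $\bu\in V$; this is where one must verify that the two bases (the $H$-orthonormal $\{\bm\omega_k\}$ and the $V$-orthonormal $\{\bm\omega_k/\sqrt{\mu_k}\}$) are compatible, which follows cleanly from \eqref{eq:VlastniCisla}. Everything else is the routine machinery of the Hilbert--Schmidt theorem applied to $T$, together with the compactness of $V\hookrightarrow H$.
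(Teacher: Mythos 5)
Your proposal is correct and is exactly the standard argument; the paper itself offers no in-text proof for this theorem, delegating it to the cited references (Mar\'ingov\'a's thesis and \cite{EM-dis}), which construct the basis in the same way — Lax--Milgram for the coercive symmetric form $a(\cdot,\cdot)=(\cdot,\cdot)_V$, compactness of $V\hookrightarrow H$, and the spectral theorem for the resulting compact self-adjoint solution operator. The only step worth stating explicitly is that injectivity of $T$ (equivalently, that the eigenfunctions span all of $H$) uses the density of $V$ in $H$: $T\bm f=0$ gives $(\bm f,\bv)_H=0$ for all $\bv\in V$, hence $\bm f=0$; with that noted, the identity $(\bu,\bm\omega_k)_V=\mu_k(\bu,\bm\omega_k)_H$ from \eqref{eq:VlastniCisla} makes $P^N$ simultaneously the $H$- and $V$-orthogonal projection, and the Bessel/Parseval bookkeeping you describe yields the projection estimates and the $V$-convergence.
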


\begin{proof}
See \cite{Maringova} or \cite{EM-dis}.
\qed \end{proof}

\subsection{Stokes problem - stationary}

Let us consider the following system
\begin{align}
- \text{div}\, \bm{D u} + \nabla \pi &=  \bm f \quad \text{ in }  \Omega \label{eq:StokesStationary1} ,\\
\text{div}\, \bu  &= 0  \quad  \text{ in } \Omega \label{eq:StokesStationary2},\\
 \alpha \bu + [(\bm{Du}) \bm n]_\tau &= \bm h \quad  \text{ on } \partial \Omega \label{eq:StokesStationary3},\\
 \bu \cdot \bm n &= 0 \quad  \text{ on } \partial \Omega \label{eq:StokesStationary4}.
\end{align}
It was examined in \cite{AG-dis}, \cite{AACG21} in the three-dimensional case. Here we formulate the analogue two-dimensional results.

\begin{theorem}[Existence in $W^{2,2}$ - stationary Stokes]\label{thm:StokesStationarHilbert}
Let $\alpha > 0$, $\Omega \in \mathcal{C}^{1,1}$ and
\begin{align*}
\bm f \in L^2(\Omega),\, \bm h \in W^{\frac{1}{2}, 2}(\partial \Omega) . 
\end{align*}
Then the problem \eqref{eq:StokesStationary1}--\eqref{eq:StokesStationary4} has a unique solution $(\bu, \pi ) \in W^{2, 2}(\Omega) \times W^{1, 2}(\Omega)$ satisfying
\begin{align*}
|| \bu ||_{2,2} + || \pi ||_{1,2} \leq C(\Omega)\left( 1 + \frac{1}{\min \{ 2, \alpha \}} \right) \left( || \bm f ||_2 + || \bm h ||_{\frac{1}{2}, 2} \right) .
\end{align*}
\end{theorem}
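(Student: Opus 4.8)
The plan is the usual one: construct a weak solution variationally, recover the pressure, and then bootstrap to $W^{2,2}\times W^{1,2}$ by localization and difference quotients, keeping track throughout of the dependence on $\alpha$. For the weak solution, pairing \eqref{eq:StokesStationary1}--\eqref{eq:StokesStationary4} with $\bm\varphi\in V$ and integrating by parts as in the derivation of \eqref{eq:WF} (but with no time-derivative and no convective term) reduces the problem to finding $\bu\in W^{1,2}_{\sigma,\bm n}(\Omega)$ with
\[
\int_\Omega \bm{Du}:\bm{D\varphi}\,{\rm d}x + \alpha\int_\Bdry \bu\cdot\bm\varphi\,{\rm d}S \;=\; \langle\bm f,\bm\varphi\rangle + \int_\Bdry\bm h\cdot\bm\varphi\,{\rm d}S \qquad\text{for all }(\bm\varphi,\trace\bm\varphi)\in V .
\]
The left-hand side is a bounded symmetric bilinear form on $V$; by Korn's inequality (\autoref{thm:Korn}) and the definition of $\|\cdot\|_V$ it is coercive, with coercivity constant degenerating like $\min\{1,\alpha\}$ as $\alpha\to0^+$. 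The right-hand side is a bounded functional on $V$ since $\bm f\in L^2(\Omega)\hookrightarrow V^*$ and $\bm h\in W^{\frac12,2}(\Bdry)\subset L^2(\Bdry)$ pairs with $\trace\bm\varphi\in L^2(\Bdry)$. Lax--Milgram gives existence, uniqueness, and the $W^{1,2}$ bound with the factor $1+\frac{1}{\min\{2,\alpha\}}$ after keeping exact track of constants. The pressure is then obtained by de Rham's lemma: the functional $\bm\varphi\mapsto\langle\bm f,\bm\varphi\rangle+\int_\Bdry\bm h\cdot\bm\varphi-\int_\Omega\bm{Du}:\bm{D\varphi}-\alpha\int_\Bdry\bu\cdot\bm\varphi$ annihilates divergence-free $\mathcal C^\infty_0(\Omega)$ fields, so by surjectivity of $\diver:W^{1,2}_0(\Omega)\to L^2_0(\Omega)$ (the Bogovskii operator) there is a unique mean-free $\pi\in L^2(\Omega)$ with $-\diver\bm{Du}+\nabla\pi=\bm f$ in $\mathcal D'(\Omega)$ and $\|\pi\|_2\le C(\Omega)(\|\bm f\|_{-1,2}+\|\bu\|_{1,2})$.

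For the $W^{2,2}$ regularity --- the heart of the theorem --- I would follow, in dimension two, the stationary argument of \cite{AACG21}, which is essentially the same here (if anything simpler). Cover $\overline\Omega$ by finitely many balls. On balls compactly contained in $\Omega$ the classical interior $W^{2,2}$ estimate for the Stokes system applies (Nirenberg difference quotients in all directions). On a ball centred at a boundary point, flatten $\Bdry$ by a $\mathcal C^{1,1}$ chart; the transformed system is of Stokes type with Lipschitz leading coefficients and the transformed boundary relations $\alpha\bu+[(\bm{Du})\bm n]_\tau=\bm h$, $\bu\cdot\bm n=0$, in which the curvature of $\Bdry$ enters only through bounded zeroth-order terms. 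Tangential difference quotients, together with ellipticity of the leading part and absorption of commutators and lower-order terms, control all tangential second derivatives of $\bu$ and tangential first derivatives of $\pi$; the remaining purely normal second derivative of $\bu$ and normal derivative of $\pi$ are then read off from the scalar equations in \eqref{eq:StokesStationary1} together with $\diver\bu=0$. Summing the local estimates gives the asserted bound, the factor $1+\frac{1}{\min\{2,\alpha\}}$ being inherited from the weak estimate since $\alpha$ multiplies only a lower-order boundary term in the regularity step. (A route specific to $d=2$ is to pass to the scalar vorticity $\omega=\nabla\times\bu$, which solves $-\Delta\omega=\nabla\times\bm f$ with a Robin-type condition obtained by rewriting $[(\bm{Du})\bm n]_\tau$ through $\omega$ and the boundary curvature, and then to recover $\bu$ from the div--curl system $\diver\bu=0$, $\nabla\times\bu=\omega$.) Uniqueness in $W^{2,2}\times W^{1,2}$ is immediate from uniqueness of the weak solution once $\pi$ is normalized.

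The main obstacle is the boundary part of the regularity step. In contrast with the Dirichlet case, the condition $[(\bm{Du})\bm n]_\tau=\bm h-\alpha\bu$ couples normal and tangential derivatives of the different components of $\bu$ and involves the curved normal, so after flattening it is a genuinely variable-coefficient relation that is not in tangential divergence form; the difference-quotient estimates must therefore be arranged so that the normal derivative of the tangential difference quotient --- which a priori appears on the wrong side of the inequality --- is reabsorbed via the momentum equation and the constraint $\bu\cdot\bm n=0$. The $\mathcal C^{1,1}$ hypothesis on $\Omega$ is precisely what makes the curvature contributions bounded, hence absorbable. Since \cite{AACG21} carries out this analysis in the harder three-dimensional setting, here it essentially suffices to transcribe that argument, using the simplifications available in two dimensions.
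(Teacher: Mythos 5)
The paper does not prove this theorem at all: its ``proof'' is a citation of Corollary~2.4.5 in \cite{AG-dis}, the two-dimensional transcription of the stationary theory of \cite{AACG21}. Your outline (Lax--Milgram on $V$ with Korn's inequality, de~Rham/Bogovskii for the pressure, interior estimates plus boundary flattening and tangential difference quotients, with the normal second derivatives and $\partial_n\pi$ recovered from the momentum equation and $\diver\bu=0$) is exactly the route taken in that reference, so in substance you are reconstructing the cited proof rather than diverging from the paper. One point to make precise if you were to write it out: the claim that the factor $1+\frac{1}{\min\{2,\alpha\}}$ is ``inherited from the weak estimate since $\alpha$ multiplies only a lower-order boundary term'' is too quick. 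If in the regularity step you simply move $\alpha\bu$ to the right-hand side as boundary data, the resulting bound contains $\alpha\norm{\bu}{W^{1,2}(\Omega)}{}$ and the constant grows linearly in $\alpha$, contradicting the stated uniformity for large $\alpha$; instead the term $\alpha\int_{\Bdry}|\partial_\tau^h\bu|^2$ must be kept on the left in the difference-quotient estimate, where it has a favourable sign, so that $\alpha$ never enters the constant except through the weak estimate's $\min\{1,\alpha\}$ coercivity. With that adjustment your sketch is consistent with the statement and with the cited result.
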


\begin{proof}
See Corollary 2.4.5 in \cite{AG-dis}.
\qed \end{proof}

\begin{theorem}[Existence in $W^{1,p}$ - stationary Stokes]\label{thm:StokesStationarWeak}
Let $\alpha > 0$, $p \in (1, +\infty )$, $\Omega \in \mathcal{C}^{1,1}$ and 
\begin{align*}
\bm f \in L^{ t(p)} (\Omega),\, \bm h \in W^{-\frac{1}{p}, p}(\partial \Omega) \, \text{with } t(p) = \frac{ 2p }{p+2} \, .
\end{align*}
Then the unique solution of \eqref{eq:StokesStationary1}--\eqref{eq:StokesStationary4} belongs to $(\bu, \pi ) \in W^{1, p}(\Omega) \times L^p(\Omega)$ and satisfies
\begin{align*}
|| \bu ||_{1,p} + || \pi ||_{p} \leq C(\Omega, p, \alpha) \left( || \bm f ||_{t(p)} + || \bm h ||_{-\frac{1}{p}, p} \right) .
\end{align*}

\end{theorem}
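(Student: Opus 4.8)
The plan is to obtain the $W^{1,p}$ regularity by a duality argument built on the $W^{2,2}$ theory of \autoref{thm:StokesStationarHilbert}, exploiting the self-adjoint structure of the dynamic Stokes operator. First I would set up the bilinear form $a((\bu,\bu_{|\Bdry}),(\bv,\bv_{|\Bdry})) = \int_\Omega \bm{Du}:\bm{Dv} + \alpha\int_{\Bdry}\bu\cdot\bv$ on $V$, so that the weak formulation of \eqref{eq:StokesStationary1}--\eqref{eq:StokesStationary4} reads $a(\bu,\bv) = \langle \bm F,\bv\rangle$ for all $\bv\in V$, where $\bm F = (\bm f, \bm h)$ enters through $\langle \bm F,\bv\rangle = \int_\Omega \bm f\cdot\bv + \int_{\Bdry}\bm h\cdot\bv$; here one uses the embedding $( W^{1,t(p)'}_{\sigma,\bm n}(\Omega))^* \times W^{-1/p,p}_\tau(\Bdry) \hookrightarrow V^*$ type of remark made before \autoref{lem:Decouple} together with the trace theorem, so that the right-hand side is a well-defined element of $V^*$ when $\bm f\in L^{t(p)}(\Omega)$ and $\bm h\in W^{-1/p,p}(\Bdry)$. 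Since the form is coercive on $V$ (Korn plus $\alpha>0$) and bounded, existence and uniqueness of a weak solution in $V$ is immediate from Lax-Milgram; the content of the theorem is the improved integrability.

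Next, to prove the $W^{1,p}$ bound I would test against the solution $\bm w$ of the dual problem: given $\bm g\in L^{p'}(\Omega)$ with, say, $\diver\bm g=0$ and $\bm g\cdot\bm n=0$ (a dense class suffices to recover the $W^{1,p}$ norm of $\bu$ by duality, using the intrinsic description of the solenoidal spaces and Helmholtz-type decomposition to absorb the pressure gradient), solve the stationary dynamic Stokes system with right-hand side $(-\diver\bm g_{\text{sym}}, 0)$ or more precisely with datum chosen so that $\bm w\in W^{2,2}\cap W^{1,p'}$ and $\|\bm w\|_{1,p'}\lesssim \|\bm g\|_{p'}$; by self-adjointness $a(\bu,\bm w) = a(\bm w,\bu)$, so $\langle \bm F,\bm w\rangle$ equals the pairing of $\bu$ against the dual datum. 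One then estimates $|\langle \bm F,\bm w\rangle|$: the interior term $\int_\Omega \bm f\cdot\bm w$ is controlled by $\|\bm f\|_{t(p)}\|\bm w\|_{t(p)'}$ and, since $t(p)' $ is the Sobolev-conjugate exponent matching $W^{1,p'}(\Omega)\hookrightarrow L^{t(p)'}(\Bdry$-adjacent$)$ in 2D — indeed $t(p)=2p/(p+2)$ is exactly chosen so that $W^{1,p'}\hookrightarrow L^{t(p)'}$ — by $\|\bm f\|_{t(p)}\|\bm w\|_{1,p'}$; the boundary term $\int_{\Bdry}\bm h\cdot\bm w$ by $\|\bm h\|_{-1/p,p}\|\bm w\|_{1/p,p'} \lesssim \|\bm h\|_{-1/p,p}\|\bm w\|_{1,p'}$ via the trace theorem. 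Combining, $|a(\bu,\bm w)|\lesssim (\|\bm f\|_{t(p)}+\|\bm h\|_{-1/p,p})\|\bm g\|_{p'}$, and taking the supremum over $\bm g$ gives $\|\bu\|_{1,p}\lesssim \|\bm f\|_{t(p)}+\|\bm h\|_{-1/p,p}$. The pressure estimate $\|\pi\|_p$ then follows from the equation $\nabla\pi = \diver\bm{Du}+\bm f$ read in the sense of distributions together with the (negative-norm) characterization $\|\pi - \bar\pi\|_p \lesssim \|\nabla\pi\|_{W^{-1,p}}$ on Lipschitz domains.

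The main obstacle I anticipate is making the duality pairing rigorous near the boundary: the dual test function $\bm w$ is only in $W^{2,2}(\Omega)$, not more regular, so I must verify that the dual Stokes problem actually delivers $\bm w\in W^{1,p'}(\Omega)$ with the right estimate — which for $p>2$ means $p'<2$, so this is \emph{lower} integrability and should follow from $W^{2,2}\hookrightarrow W^{1,p'}$ in 2D (true for all $p'<\infty$ in fact) — and, more delicately, that the boundary datum of the \emph{primal} problem can legitimately be paired with $\mathrm{tr}\,\bm w$; this requires checking the compatibility of the dynamic boundary condition under the adjoint, i.e. that the operator really is symmetric including its boundary part, which is where \eqref{eq:Basis3} and the structure of $V$ are used. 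A secondary technical point is that the exponent bookkeeping for $t(p)$ must be done in the 2D Sobolev scale precisely so that no loss occurs; I would carry this out by noting $1/t(p)' = 1/2 - (1 - 1/p')\cdot(1/2)$ — equivalently the trace/Sobolev embedding $W^{1,p'}(\Omega)\hookrightarrow L^{t(p)'}(\Omega)$ is the relevant inclusion — and this is exactly the definition of $t(p)$. Since \autoref{thm:StokesStationarWeak} is stated as "the analogue two-dimensional result" of \cite{AACG21}, in the write-up I would largely reduce to citing the 3D argument there with the dimensional constants adjusted, flagging only the points where $d=2$ changes the admissible exponent range.
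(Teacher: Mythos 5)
The paper does not actually prove this theorem: its proof is a citation of Corollary 2.5.6 in \cite{AG-dis}, where the estimate is obtained by the localization/half-space potential-theoretic machinery for the Stokes system with Navier-type boundary conditions. Your write-up instead proposes a self-contained duality argument, and this is where there is a genuine gap: the scheme is circular. To bound $\norm{\bu}{1,p}{}$ by duality you must pair $\bm{Du}$ (and $\bu$) against arbitrary $\bm g \in L^{p'}$, i.e.\ solve the adjoint problem with a divergence-form right-hand side lying only in $W^{-1,p'}(\Omega)$, and you must obtain the estimate $\norm{\bm w}{1,p'}{} \le C\norm{\bm g}{p'}{}$. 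That is precisely the statement of \autoref{thm:StokesStationarWeak} at the conjugate exponent, and it cannot be extracted from \autoref{thm:StokesStationarHilbert}: the dual datum $-\diver \bm g$ is not in $L^2(\Omega)$ (for either sign of $p-2$), so the $W^{2,2}$ theory simply does not apply to the dual problem; and even on a dense class of smooth $\bm g$ the Hilbert theory would only give $\norm{\bm w}{2,2}{}\le C(\norm{\diver\bm g}{2}{}+\dots)$, a bound by stronger norms of $\bm g$ than $\norm{\bm g}{p'}{}$, which is useless for the duality limit. Note also that the primal data themselves ($\bm f \in L^{t(p)}$ with $t(p)<2$, $\bm h$ in a negative-order space) are too weak for the $W^{2,2}$ theory, so there is no exponent range you can seed the duality with; breaking the circle requires an independent ingredient such as the flattening and explicit half-space estimates of \cite{AG-dis}, \cite{AACG21}, which is exactly what the paper invokes by citation.

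Your exponent bookkeeping is fine ($W^{1,p'}(\Omega)\hookrightarrow L^{t(p)'}(\Omega)$ in 2D and $\trace W^{1,p'}(\Omega)=W^{1/p,p'}(\Bdry)$ dualizes correctly against $W^{-1/p,p}(\Bdry)$), the symmetry of the form $a$ is not an issue, and the pressure recovery via the Ne\v{c}as negative-norm estimate is standard once the velocity bound is available. So the only honest versions of your argument are either (i) to do what the paper does and quote the reference, adjusting the dimension-dependent statements, or (ii) to supply a genuinely different proof of the $W^{1,p'}$ dual estimate; as written, the central step would fail.
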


\begin{proof}
See Corollary 2.5.6 in \cite{AG-dis}.
\qed \end{proof}

\begin{theorem}[Existence in $W^{2,p}$ - stationary Stokes]\label{thm:StokesStationarStrong}
Let $\alpha > 0$, $p \in (1, +\infty )$, $\Omega \in \mathcal{C}^{1,1}$ and
\begin{align*}
\bm f \in L^{p}(\Omega),\, \bm h \in W^{1-\frac{1}{p}, p}(\partial \Omega) . 
\end{align*}
Then the unique solution of \eqref{eq:StokesStationary1}--\eqref{eq:StokesStationary4} belongs to $(\bu, \pi ) \in W^{2, p}(\Omega) \times W^{1, p}(\Omega)$ and satisfies
\begin{align*}
|| \bu ||_{2,p} + || \pi ||_{1,p} \leq C(\Omega, p, \alpha) \left( || \bm f ||_p + || \bm h ||_{1-\frac{1}{p}, p} \right) .
\end{align*}

\end{theorem}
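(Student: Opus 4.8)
The plan is to reduce the problem to the ``perfect-slip'' Stokes system (the one without the zeroth-order boundary term $\alpha\bu$) and then invoke the classical $L^p$ elliptic regularity theory; the friction term on $\Bdry$ is to be moved into the boundary data, which is legitimate once we know, from the already available lower-order theory, that $\trace\bu$ lies in the relevant boundary space.

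First I would record that existence and uniqueness of a weak solution, together with the estimate $\|\bu\|_{1,p}+\|\pi\|_p\le C(\Omega,p,\alpha)\bigl(\|\bm f\|_{t(p)}+\|\bm h\|_{-1/p,p}\bigr)$, is already furnished by \autoref{thm:StokesStationarWeak}: on the bounded domain $\Omega$ one has $\bm f\in L^p(\Omega)\hookrightarrow L^{t(p)}(\Omega)$ and $\bm h\in W^{1-1/p,p}(\Bdry)\hookrightarrow W^{-1/p,p}(\Bdry)$. In particular $\bu\in W^{1,p}(\Omega)$, so by the trace theorem $\trace\bu\in W^{1-1/p,p}(\Bdry)$ and therefore
\[
	\bm g:=\bm h-\alpha\,\trace\bu\in W^{1-1/p,p}(\Bdry),\qquad
	\|\bm g\|_{1-1/p,p}\le\|\bm h\|_{1-1/p,p}+\alpha\,C\|\bu\|_{1,p}.
\]
The pair $(\bu,\pi)$ then solves the perfect-slip Stokes system
\[
	-\diver\bm{Du}+\nabla\pi=\bm f,\quad \diver\bu=0\ \text{in}\ \Omega,\qquad
	\bigl[(\bm{Du})\bm n\bigr]_\tau=\bm g,\quad \bu\cdot\bm n=0\ \text{on}\ \Bdry,
\]
in which no zeroth-order boundary term survives.

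For this reduced system I would run the Agmon--Douglis--Nirenberg $L^p$ regularity scheme. Cover $\overline\Omega$ by finitely many coordinate patches with a subordinate partition of unity; on interior patches apply the Cattabriga--Solonnikov $L^p$ estimates for the Stokes system, and on boundary patches flatten $\Bdry$ through the $\mathcal{C}^{1,1}$ charts. After flattening, the principal part has continuous (in fact Lipschitz) coefficients, so a freezing-of-coefficients argument on small enough balls --- comparing with a constant-coefficient half-space model and absorbing the error --- reduces matters to the half-space Stokes system with the flattened slip conditions $\bigl((\bm{Du})\bm n\bigr)_\tau=\bm g$, $u_n=0$; the crucial point is that this boundary value problem satisfies the Shapiro--Lopatinskii complementing condition, which yields the model $L^p$ estimate. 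Summing the local estimates gives
\[
	\|\bu\|_{2,p}+\|\pi\|_{1,p}\le C(\Omega,p)\bigl(\|\bm f\|_p+\|\bm g\|_{1-1/p,p}+\|\bu\|_{1,p}+\|\pi\|_p\bigr),
\]
into which I would insert the bound for $\bm g$ above and the \autoref{thm:StokesStationarWeak} estimate for $\|\bu\|_{1,p}+\|\pi\|_p$; this absorbs all lower-order terms and produces the asserted inequality with constant $C(\Omega,p,\alpha)$. Uniqueness needs nothing new, since any $W^{2,p}(\Omega)$ solution belongs to $W^{1,2}_{\sigma,\bm n}(\Omega)$ and is hence the unique weak solution.

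The main obstacle is the boundary $L^p$ estimate for the perfect-slip Stokes system --- verifying the complementing condition and organising the flattening together with the freezing argument in the merely $\mathcal{C}^{1,1}$ category. This is exactly the content of the stationary analysis carried out in \cite{AG-dis}, \cite{AACG21} in three space dimensions, and the two-dimensional situation is entirely analogous (if anything simpler, $\Bdry$ being one-dimensional); indeed one may transcribe their proof almost verbatim, which is the route taken here. Finally, the $\alpha$-dependence of the constant enters only through \autoref{thm:StokesStationarWeak} (or, if one prefers to start the bootstrap at $p=2$, through \autoref{thm:StokesStationarHilbert}), consistent with the stated form $C(\Omega,p,\alpha)$.
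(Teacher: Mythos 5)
Your proposal is correct in outline, but it is worth noting that the paper itself offers no argument at all here: its ``proof'' is a direct citation of Theorem 2.5.9 in \cite{AG-dis}, where the $W^{2,p}$ estimate is proved for the slip problem \emph{with} the Robin-type term $\alpha\bu$ already included (in 3D, the 2D statement being formulated as the analogue). You instead first strip the $\alpha$-term, using \autoref{thm:StokesStationarWeak} to place $\alpha\,\trace\bu$ in $W^{1-1/p,p}(\Bdry)$, and then appeal to perfect-slip $L^p$ regularity via the ADN localization/flattening/freezing scheme and the Shapiro--Lopatinskii condition --- for which you ultimately lean on the same sources \cite{AG-dis}, \cite{AACG21}. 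Your reduction is legitimate: although the perfect-slip operator has a nontrivial kernel (rigid motions tangent to $\Bdry$), you never need solvability or uniqueness of the reduced problem, only the a priori estimate and local regularity for a solution you already have, and the $\alpha$-dependence of the constant then enters solely through the lower-order bound, consistent with $C(\Omega,p,\alpha)$. What your route buys is a transparent tracking of the $\alpha$-dependence and the ability to quote the (older, well-documented) perfect-slip literature for the model half-space problem; what the paper's direct citation buys is brevity and no need to discuss the reduction or the kernel issue at all. The one point to keep honest is that the hard analytic content --- the complementing condition and the boundary model estimate in the $\mathcal{C}^{1,1}$ category, plus the step upgrading the a priori bound to actual $W^{2,p}$ membership --- is exactly what you (like the authors) delegate to the cited stationary analysis, so neither argument is self-contained.
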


\begin{proof}
See Theorem 2.5.9 in \cite{AG-dis}.
\qed \end{proof}

\begin{remark}
Due to \cite[Remark 2.6.16]{AG-dis} previous three theorems also hold with the leading elliptic term in the form $$- \diver (\bm A(x) \nabla) \bu .$$ 
\end{remark}

\subsection{Stokes problem -- evolutionary}

The evolutionary version of the previous system looks like this
\begin{align}
\partial_t \bu - \text{div}\, \bm{D u} + \nabla \pi &=  \bm f \quad \text{ in } (0, T) \times \Omega \label{eq:StokesEvolutionary1},\\
\text{div}\, \bu  &= 0  \quad  \text{ in } (0, T) \times \Omega \label{eq:StokesEvolutionary2},\\
\beta \partial_t \bu  + \alpha \bu + [(\bm{Du}) \bm n]_\tau &= \beta \bm h \quad  \text{ on } (0, T) \times \partial \Omega \label{eq:StokesEvolutionary3},\\
 \bu \cdot \bm n &= 0 \quad  \text{ on } (0, T) \times \partial \Omega  \label{eq:StokesEvolutionary4}, \\
 \bu (0) & = \bu_0 \quad  \text{ in } \overline{ \Omega  } \label{eq:StokesEvolutionary5}. 
\end{align}
Here, we will assume that $\alpha$, $\beta > 0$. The first result is then the following.

\begin{theorem} \label{thm:StokesEvolutionarStrong}
Let $\Omega \in \mathcal{C}^{0,1}$ and
\begin{align*}
\bm F & \in L^2(0, T; V^* ),  \\
\bu_0 & \in H.
\end{align*}
Then the problem \eqref{eq:StokesEvolutionary1}--\eqref{eq:StokesEvolutionary5} has the unique weak solution $(\bu, \pi)$ and the velocity $\bu$ satisfies
\begin{align*}
\bu  & \in L^\infty(0, T; H) \cap L^2(0, T; V).
\end{align*}
\begin{itemize}
\item[(i)] Suppose further that
\begin{align*}
\partial_t \bm F   & \in L^2(0, T; V^* ).  
\end{align*}
Then there also holds
\begin{align*}
\partial_t \bu & \in L^\infty_{\text{loc}}(0, T; H) \cap L^2_{\text{loc}}(0, T; V), \\
\bu  & \in L^\infty_{\text{loc}}(0, T; V) .
\end{align*}
Moreover, if $\bm f(0) \in L^2(\Omega), \bm h(0) \in W^{\frac{1}{2}, 2}(\partial \Omega)$ and $\bu_0 \in V \cap W^{2,2}(\Omega)$, then the previous result holds globally in time.
\item[(ii)] Alternatively, let
\begin{align*}
\bm F & \in L^2(0, T; H ). 
\end{align*}
Then the solution satisfies
\begin{align*}
\bu  & \in L^\infty_{\text{loc}}(0, T; V) \cap L^2_{\text{loc}}(0, T; W^{1, 4}(\Omega)), \\
\partial_t \bu & \in L^2_{\text{loc}}(0, T; H).
\end{align*}
\end{itemize}

\end{theorem}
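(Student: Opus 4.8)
The plan is to treat \eqref{eq:StokesEvolutionary1}--\eqref{eq:StokesEvolutionary5} as a linear parabolic equation in the Gelfand triple $V\hookrightarrow H\hookrightarrow V^*$ generated by the bounded symmetric bilinear form $a(\bu,\bm\varphi)=\int_\Omega\bm{Du}:\bm{D\varphi}+\alpha\int_\Bdry\bu\cdot\bm\varphi$, which is coercive on $V$ by Korn's inequality (\autoref{thm:Korn}). For the first assertion I would run the Galerkin scheme in the basis $\{\bm\omega_k\}$ of \autoref{thm:Basis}: the coefficients solve a linear ODE, so the approximations $\bu^N$ exist on $[0,T]$, and testing with $\bu^N$ yields the energy identity $\tfrac12\tfrac{{\rm d}}{{\rm d}t}\|\bu^N\|_H^2+a(\bu^N,\bu^N)=\langle\bm F,\bu^N\rangle$, hence --- by coercivity, Young's inequality and Gronwall --- uniform bounds in $L^\infty(0,T;H)\cap L^2(0,T;V)$; a comparison in the equation bounds $\partial_t\bu^N$ in $L^2(0,T;V^*)$, and as every term is linear the Aubin--Lions lemma permits passage to the limit. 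Uniqueness is immediate (the difference of two solutions solves the homogeneous problem, and its energy equality forces it to vanish), and the pressure is recovered from $\nabla\pi=\bm f-\partial_t\bu+\diver\bm{Du}$ by the de Rham argument.

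For part (i) I would first note that $\bm F,\partial_t\bm F\in L^2(0,T;V^*)$ give $\bm F\in\mathcal C([0,T];V^*)\subset L^\infty(0,T;V^*)$ and make the Galerkin coefficients $H^2$ in time, so that $\partial_t\bu^N$ satisfies the Galerkin equation with datum $\partial_t\bm F$. Testing with $\partial_t\bu^N$ and using $\langle\bm F,\partial_t\bu^N\rangle=\tfrac{{\rm d}}{{\rm d}t}\langle\bm F,\bu^N\rangle-\langle\partial_t\bm F,\bu^N\rangle$, then multiplying by the weight $t$ and integrating, the term $\int_0^t\|\bu^N\|_V^2$ produced by differentiating the weight is absorbed by the already-obtained $L^2(0,T;V)$ bound; this gives $t^{1/2}\partial_t\bu^N$ bounded in $L^2(0,T;H)$ and $t^{1/2}\bu^N$ bounded in $L^\infty(0,T;V)$. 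Testing the differentiated Galerkin equation with $\partial_t\bu^N$ and repeating the weighting --- now with $t^2$, the extra term absorbed via the previous bound --- controls $t\,\partial_t\bu^N$ in $L^\infty(0,T;H)\cap L^2(0,T;V)$; in the limit this yields $\partial_t\bu\in L^\infty_{\mathrm{loc}}(0,T;H)\cap L^2_{\mathrm{loc}}(0,T;V)$, and reading the equation at fixed time as $a(\bu(t),\cdot)=\langle\bm F(t)-\partial_t\bu(t),\cdot\rangle$ gives $\bu\in L^\infty_{\mathrm{loc}}(0,T;V)$ by Lax--Milgram. For the global statement the extra hypotheses make the functional $\bm\varphi\mapsto\langle\bm F(0),\bm\varphi\rangle-a(\bu_0,\bm\varphi)$ bounded on $H$: since $\bu_0\in W^{2,2}(\Omega)$ one integrates by parts, $a(\bu_0,\bm\varphi)=-\int_\Omega\diver\bm{Du}_0\cdot\bm\varphi+\int_\Bdry\big(\alpha\bu_0+[(\bm{Du}_0)\bm n]_\tau\big)\cdot\bm\varphi$ with both integrands in $L^2$, while $\bm f(0)\in L^2(\Omega)$ and $\bm h(0)\in W^{1/2,2}(\Bdry)\hookrightarrow L^2(\Bdry)$; hence $\partial_t\bu(0)\in H$, the weights $t,t^2$ may be dropped, and the bounds hold on all of $[0,T]$.

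For part (ii), with $\bm F\in L^2(0,T;H)$, testing the Galerkin equation with $\partial_t\bu^N$ gives directly $\tfrac{{\rm d}}{{\rm d}t}a(\bu^N,\bu^N)+\|\partial_t\bu^N\|_H^2\le\|\bm F\|_H^2$; multiplying by $t$ and integrating against the $L^2(0,T;V)$ bound yields, in the limit, $\bu\in L^\infty_{\mathrm{loc}}(0,T;V)$ and $\partial_t\bu\in L^2_{\mathrm{loc}}(0,T;H)$. Finally, for a.e.\ $t$ in a compact subinterval of $(0,T)$ we have $\partial_t\bu(t)\in H=L^2_{\sigma,\bm n}(\Omega)\times L^2_\tau(\Bdry)$ by \autoref{lem:Decouple}, so $\bu(t)$ solves the stationary system \eqref{eq:StokesStationary1}--\eqref{eq:StokesStationary4} with right-hand side $\bm f(t)-\partial_t\bu(t)\in L^2(\Omega)$ and boundary datum $\beta\big(\bm h(t)-\partial_t\bu(t)\big)\in L^2(\Bdry)$. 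Since $L^2(\Omega)\hookrightarrow L^{t(4)}(\Omega)=L^{4/3}(\Omega)$ and, by the fractional Sobolev embedding on the one-dimensional $\Bdry$, $W^{1/4,4/3}(\Bdry)\hookrightarrow L^2(\Bdry)$ and hence $L^2(\Bdry)\hookrightarrow W^{-1/4,4}(\Bdry)$, \autoref{thm:StokesStationarWeak} with $p=4$ gives $\|\bu(t)\|_{1,4}\le C\big(\|\bm F(t)\|_H+\|\partial_t\bu(t)\|_H\big)$; squaring and integrating over the subinterval yields $\bu\in L^2_{\mathrm{loc}}(0,T;W^{1,4}(\Omega))$.

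The step I expect to be most delicate is the bookkeeping of the boundary terms dictated by the dynamic condition: $\partial_t\bu$ must be controlled simultaneously as an $L^2(\Omega)$-function and, through the decoupling $H=L^2_{\sigma,\bm n}(\Omega)\times L^2_\tau(\Bdry)$, as an $L^2(\Bdry)$-function, and it is precisely the borderline trace embedding $W^{1/4,4/3}(\Bdry)\hookrightarrow L^2(\Bdry)$ on the one-dimensional boundary that pins the exponent in (ii) to $p=4$. I would also need to justify rigorously the time-differentiation of the Galerkin equation --- either via the $H^2$-in-time regularity of the coefficients or via difference quotients --- and to check that all limiting objects retain the couple structure of $V$, $H$, $V^*$; these are the places where the dynamic boundary condition genuinely complicates an otherwise classical parabolic argument.
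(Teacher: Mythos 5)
Your proposal is correct and follows the paper's overall skeleton (Galerkin in the eigenbasis, energy estimate, testing with $\partial_t \bu^N$, then stationary Stokes regularity for the $W^{1,4}$ gain), but it differs in two places worth noting. In (i), where you localize away from $t=0$ by the classical time-weight trick (multiplying by $t$ and $t^2$), the paper instead picks a good starting time $t_0$ at which $\partial_t \bu^n(t_0)\in H$ and integrates from there; these are interchangeable implementations of the same parabolic smoothing, and your observation that $\langle \bm F,\partial_t\bu^N\rangle$ must be rewritten via $\tfrac{{\rm d}}{{\rm d}t}\langle\bm F,\bu^N\rangle-\langle\partial_t\bm F,\bu^N\rangle$ is exactly the point that makes the hypothesis $\partial_t\bm F\in L^2(0,T;V^*)$ enter. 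In (ii) the routes genuinely diverge: the paper stays at the Galerkin level, multiplies by $\mu_k c_k^n$, introduces $L^n=\sum_k\mu_k c_k^n\bm\omega_k$, uses the eigenvalue relation to get $L^n\in L^2_{\text{loc}}(0,T;H)$, and then applies \autoref{thm:StokesStationarWeak} with $p=4$ to $\bu^n$ (which solves the stationary problem with datum $L^n$), obtaining $\|\bu^n\|_{1,4}\le C\|L^n\|_H$ uniformly; you instead obtain only $\partial_t\bu\in L^2_{\text{loc}}(0,T;H)$ and $\bu\in L^\infty_{\text{loc}}(0,T;V)$ from the $\partial_t\bu^N$ test, pass to the limit, and then apply the stationary theorem a.e.\ in time to $\bu(t)$ with $\bm f(t)-\partial_t\bu(t)$ and $\beta(\bm h(t)-\partial_t\bu(t))$ as data, using $L^2(\Omega)\hookrightarrow L^{4/3}(\Omega)$ and $L^2(\Bdry)\hookrightarrow W^{-1/4,4}(\Bdry)$. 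This is legitimate (uniqueness of the stationary weak solution identifies $\bu(t)$ with the regular one), avoids the $L^n$ bookkeeping and the special structure of the eigenbasis, and is in fact the very mechanism the paper itself deploys one step later in \autoref{thm:StokesEvolutionarFirstStep}; what the paper's $L^n$ device buys is that the $W^{1,4}$ bound is uniform at the approximation level, so no pointwise-in-time identification is needed. Two cosmetic remarks: Aubin--Lions is unnecessary for this linear problem (weak convergence suffices), and you do not need the full strength of \autoref{lem:Decouple} in (ii) --- the inclusion $H\subset L^2(\Omega)\times L^2(\Bdry)$ is enough --- which also keeps you consistent with the fact that, as in the paper, the $C^{1,1}$ hypothesis enters only through the stationary regularity theorem used for the $W^{1,4}$ conclusion.
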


\begin{proof}
The starting point is the Galerkin approximation, i.e. for a given $n \in \mathbb{N}$ we look for the solution in the form 
\begin{align*}
\bu^n & = \sum_{k=1}^n c_k^n(t) \bm \omega_k ,
\end{align*}
where $c_k^n$ are some functions of time satisfying, for all $k = 1, \dots , n$, the system
\begin{align}
(\partial_t \bu^n, \bm \omega_k)_H + \int\limits_{\Omega} \bm {Du}^n  : \bm{D\omega}_k + \alpha \int\limits_{\partial \Omega} \bu^n \cdot \bm \omega_k = \left< \bm F, \bm \omega_k \right>  \label{eq:GalerkinStokes}
\end{align}
together with the initial condition 
\begin{align*}
\bu^n(0) &= \bu_0^n,
\end{align*}
where $\bu_0^n$ is the orthogonal projection of $\bu_0$ on the space spanned by $\{ \bm \omega_k \}_{k=1}^n$. This can also be written as $c_k^n (0)= ( \bu_0, \bm \omega_k )_H$. The existence of these functions $c_k^n$ follows from the standard theory. 

Existence of the solution is done in a standard way. We multiply 
\eqref{eq:GalerkinStokes} by $c_k^n(t)$ and sum the result over $k = 1, \dots , n$ to obtain
\begin{align*}
\frac{1}{2} \cdot \frac{{\rm d}}{{\rm d}t} ||\bu^n ||_H^2  + || \bu^n ||_V^2= \left< \bm F, \bu^n \right>.
\end{align*}
By Young's inequality, we get the uniform estimate for $\bu^n$ in the form
\begin{align*}
\bu^n &\in L^\infty (0, T; H) \cap L^2 (0, T; V) .
\end{align*}
Next, using the duality argument we also obtain that the time derivative 
is bounded in 
\begin{align*}
\partial_t \bu^n   & \in L^{2} (0, T; V^*).
\end{align*}
Passing to the limit is straightforward and uniqueness is standard.

\underline{Proof of (i).} Because of the linearity of our system it is clear that the function $\bv := \partial_t \bu$ satisfies the same system as $\bu$, just with $\partial_t \bm f$, $\partial_t \bm h$ instead of $\bm f$, $\bm h$. Rigorously, we can take the time derivative of \eqref{eq:GalerkinStokes} and multiply the result by $(c_k^n)'(t)$ and sum over all indices. We will obtain the uniform estimate 
\begin{align*}
\partial_t \bu^n \in L^\infty_{\text{loc}}(0, T; H) \cap L^2_{\text{loc}}(0, T; V)  .
\end{align*}
Of course, the result will hold only locally in time, because we do not prescribe any condition on $(c_k^n)'(0)$. It means that we need to verify that $\partial_t \bu^n(t_0) \in H$ for some $t_0 \in [0, T]$. This can be done if we multiply \eqref{eq:GalerkinStokes} by $(c_k^n)'$. Let us remark that if $\bu_0, \bm f(0), \bm h(0)$ would be better we would obtain the global result. See also Theorem III.3.5 in \cite{Temam} in the Dirichlet setting.

Finally, the fact that both $\bu$ and $\partial_t \bu$ are in $L^2_{\text{loc}}(0, T; V)$ implies that $\bu \in L^\infty_{\text{loc}}(0, T; V)$.

\underline{Proof of (ii).} First, we multiply \eqref{eq:GalerkinStokes} by $(c_k^n)'(t)$ and sum over $k$'s to obtain
\begin{align*}
|| \partial_t \bu^n ||_H^2 + \frac{1}{2} \cdot \frac{{\rm d}}{{\rm d}t} || \bu^n||_V^2  = \left< \bm F, \partial_t \bu^n \right> . 
\end{align*}
Second, if we multiply \eqref{eq:GalerkinStokes} by $\mu_k c_k^n(t)$ and sum again, we get
\begin{align*}
\frac{1}{2} \cdot \frac{{\rm d}}{{\rm d}t} || \bu^n ||_{V}^2 + (\bu^n, L^n)_V = \left<\bm F, L^n\right> , 
\end{align*}
where 
\begin{align*}
L^n := \sum_{k=1}^n \mu_k c_k^n(t) \bm \omega_k .
\end{align*}
Let us note that we used the following identity
\begin{align*}
\sum_{k=1}^n (\partial_t \bu^n, \mu_k c_k^n(t) \bm \omega_k )_H &= \sum_{k=1}^n c_k^n(t) \left[ \int\limits_{\Omega} \partial_t \bu^n \mu_k \bm \omega_k + \beta \int\limits_{\partial \Omega} \partial_t \bu^n  \mu_k \bm \omega_k \right] \\
&= \sum_{k=1}^n c_k^n(t) (\partial_t \bu^n, \bm \omega_k)_{V} = (\partial_t \bu^n, \bu^n)_{V} \\
& = \frac{1}{2} \cdot \frac{{\rm d}}{{\rm d}t} || \bu^n ||_{V}^2.
\end{align*}
Next, we add both equations to obtain
\begin{align*}
|| \partial_t \bu^n ||_H^2 & + \frac{{\rm d}}{{\rm d}t} || \bu^n ||_{V}^2  +   (\bu^n, L^n)_V    = \left< \bm F, \partial_t \bu^n + L^n \right> .
\end{align*}
Observe that $L^n \in V$, and so, by \eqref{eq:VlastniCisla}, we obtain
\begin{align*}
(L^n, L^n)_H  = || L^n ||_H^2 = (\bu^n, L^n)_{V} ,
\end{align*}
and therefore
\begin{align*}
|| \partial_t \bu^n ||_H^2 & + \frac{{\rm d}}{{\rm d}t} || \bu^n ||_{V}^2  +  || L^n ||_H^2   = \left< \bm F, \partial_t \bu^n + L^n \right> .
\end{align*}

Now, let us choose some small $t_0 \in (0, T)$ for which $\bu^n(t_0) \in V$. We integrate the relation over $(t_0, T)$ and use Hölder's and Young's inequalities to obtain
\begin{align*}
\int\limits_{t_0}^t & || \partial_t \bu^n ||_H^2  +  2 || \bu^n (t) ||_{V}^2  +  \int\limits_{t_0}^t  || L^n ||_H^2   \leq  2|| \bu^n (t_0) ||_{V}^2 +   \int \limits_{t_0}^t  || \bm F ||_H^2   .  
\end{align*}
On the right-hand side we can estimate all terms, and therefore, we get the following uniform estimates
\begin{align*}
\partial_t \bu^n & \in  L^2_{\text{loc}}(0, T; H), \\
\bu^n & \in L^\infty_{\text{loc}}(0, T; V) , \\
L^n & \in L^2_{\text{loc}}(0, T; H) .
\end{align*}

It remains to show that the last property gives us the estimate of $\bu^n$ in $L^2_{\text{loc}}(0, T; W^{1, 4}(\Omega))$. Because any $\bm \omega_k$ solves \eqref{eq:Basis1}--\eqref{eq:Basis4} we can apply \autoref{thm:StokesStationarWeak} for $p =4$, $\alpha > 0$ and $(\bm f, \bm h) = (\sum_k \mu_k c_k^n \bm \omega_k, \sum_k \mu_k c_k^n \text{tr}\, \bm \omega_k)$. We obtain that $\sum_k c_k^n\bm \omega_k$ belongs into $W^{1,4}(\Omega)$, more specifically, there holds
\begin{align*}
 \bigl|\bigl|\sum_k c_k^n\bm \omega_k \bigr|\bigr|_{1,4} &\leq C (\Omega, \alpha) \left( \bigl|\bigl| \sum_k \mu_k c_k^n\bm \omega_k \bigr|\bigr|_{s(4)} + \bigl|\bigl| \sum_k \mu_k c_k^n \text{tr}\, \bm \omega_k \bigr|\bigr|_{-\frac{1}{4}, 4} \right) \\
& \leq C \bigl|\bigl| \sum_k \mu_k c_k^n\bm \omega_k \bigr|\bigr|_H .    
\end{align*}
We used that $ L^2(\Omega)  \hookrightarrow  L^{s(4)} (\Omega) $ and $ L^2(\partial \Omega)  \hookrightarrow W^{-\frac{1}{4}, 4} (\partial \Omega)$ in the two-dimensional setting.
Thanks to the definition of $\bu^n$ we have 
$$ || \bu^n ||_{1,4}^2 \leq C || L^n ||_H^2.$$
This completes the last part of the proof. Let us note that, if $\bu_0 \in V$, then we would obtain the result globally in time.

\qed \end{proof}

\begin{remark}
In contrast to the Dirichlet boundary data situation, we are not able to show that the velocity field belongs to $\bu \in L^2(0, T; W^{2, 2}(\Omega))$ using just the Galerkin approximation.
\end{remark}

Now, we will bootstrap the spatial regularity of solutions. We consider
the time derivative as a part of the right-hand side, and use the stationary 
theory mentioned in the previous section.

\begin{lemma}\label{thm:StokesEvolutionarFirstStep}
Let $1 < p \leq + \infty$, $1 < q < +\infty$, $\Omega \in \mathcal{C}^{1,1}$, $\bu_0 \in H$ and suppose that
\begin{align*}
& \bm F, \partial_t \bm F \in L^2(0, T; V^* ), \\
&\bm f \in L^p (0, T; L^{t( \min \{ q, 4\} )}(\Omega) ), \, \bm h \in L^p (0, T; W^{-\frac{1}{ \min \{ q, 4\}},  \min \{ q, 4\}}(\partial \Omega) ).
\end{align*}
Then the unique weak solution of \eqref{eq:StokesEvolutionary1}--\eqref{eq:StokesEvolutionary4} satisfies
$$ \bu \in L^p_{\text{loc}} (0, T; W^{1,  \min \{ q, 4\}}(\Omega)). $$
In particular, for $ p = +\infty$, $q > 2$, we obtain
$$ \bu \in L^\infty_{\text{loc}} (0, T; L^\infty(\Omega)). $$
Moreover, if 
\begin{align*}
&\bm f \in L^2_{\text{loc}} (0, T; L^2(\Omega) ), \,  \bm h \in L^2_{\text{loc}} (0, T; W^{\frac{1}{2}, 2}(\partial \Omega) ),
\end{align*}
then
$$ \bu \in L^2_{\text{loc}} (0, T; W^{2, 2}(\Omega)). $$
\end{lemma}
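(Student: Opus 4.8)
The plan is to treat the time derivative $\partial_t \bu$ as part of the right-hand side and invoke the stationary regularity theory of Section~2. More precisely, fix $t$ in the local time interval and rewrite \eqref{eq:StokesEvolutionary1}--\eqref{eq:StokesEvolutionary4} as the stationary Stokes system
\begin{align*}
- \diver \bm{Du}(t) + \nabla \pi(t) &= \bm f(t) - \partial_t \bu(t) \quad \text{ in } \Omega, \\
\diver \bu(t) &= 0 \quad \text{ in } \Omega, \\
\alpha \bu(t) + [(\bm{Du}(t))\bm n]_\tau &= \beta \bm h(t) - \beta \partial_t \bu(t) \quad \text{ on } \Bdry,
\end{align*}
so that the data for the stationary problem at time $t$ are the interior force $\bm f(t) - \partial_t \bu(t)$ and the boundary force $\beta(\bm h(t) - \partial_t \bu(t))$. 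The regularity of $\partial_t \bu$ is supplied by \autoref{thm:StokesEvolutionarStrong}(i): since $\bm F, \partial_t \bm F \in L^2(0,T;V^*)$ and $\bu_0 \in H$, we have $\partial_t \bu \in L^\infty_{\text{loc}}(0,T;H) \cap L^2_{\text{loc}}(0,T;V)$. In particular, for a.e.\ $t$ (away from $0$), $\partial_t \bu(t) \in H$, which means its interior part lies in $L^2(\Omega)$ and — after decoupling à la \autoref{lem:Decouple} or the subsequent remark — the relevant boundary contribution lies in $L^2(\Bdry)$.

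Next I would feed these data into \autoref{thm:StokesStationarWeak} with exponent $m := \min\{q,4\}$. In the two-dimensional setting $L^2(\Omega) \hookrightarrow L^{t(m)}(\Omega)$ because $t(m) = 2m/(m+2) < 2$, and $L^2(\Bdry) \hookrightarrow W^{-1/m,m}(\Bdry)$; combined with the hypotheses $\bm f \in L^p(0,T;L^{t(m)}(\Omega))$ and $\bm h \in L^p(0,T;W^{-1/m,m}(\Bdry))$, the stationary estimate yields, for a.e.\ $t$,
\[
\norm{\bu(t)}{1,m}{} \le C(\Omega,m,\alpha)\Bigl( \norm{\bm f(t)}{t(m)}{} + \norm{\partial_t \bu(t)}{2}{} + \beta\norm{\bm h(t)}{-1/m,m}{} + \beta\norm{\partial_t \bu(t)}{L^2(\Bdry)}{} \Bigr).
\]
Raising to the $p$-th power (or taking the essential supremum if $p=\infty$) and integrating over a compact subinterval $[t_0,T]$, the right-hand side is finite: the $\bm f$ and $\bm h$ terms by assumption, and the $\partial_t \bu$ terms because $\partial_t \bu \in L^\infty_{\text{loc}}(0,T;H)$ controls both $\norm{\partial_t \bu}{2}{}$ and $\norm{\partial_t \bu}{L^2(\Bdry)}{}$ uniformly on $[t_0,T]$. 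This gives $\bu \in L^p_{\text{loc}}(0,T;W^{1,m}(\Omega))$. For $p=\infty$ and $q>2$ we may take $m>2$, and Sobolev embedding $W^{1,m}(\Omega) \hookrightarrow L^\infty(\Omega)$ in dimension two then gives $\bu \in L^\infty_{\text{loc}}(0,T;L^\infty(\Omega))$.

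For the last assertion, under the stronger hypotheses $\bm f \in L^2_{\text{loc}}(0,T;L^2(\Omega))$ and $\bm h \in L^2_{\text{loc}}(0,T;W^{1/2,2}(\Bdry))$, I would instead apply \autoref{thm:StokesStationarHilbert} at a.e.\ time $t$, with data $\bm f(t) - \partial_t \bu(t) \in L^2(\Omega)$ and $\beta(\bm h(t) - \partial_t \bu(t))$; here one needs the boundary datum in $W^{1/2,2}(\Bdry)$, and the $\partial_t \bu(t)$ boundary contribution is in $W^{1/2,2}(\Bdry)$ because $\partial_t \bu(t) \in V$ for a.e.\ $t$ (by \autoref{thm:StokesEvolutionarStrong}(i)), whose boundary trace has this regularity thanks to the intrinsic description $V = \{(\bu,\trace \bu) : \bu \in W^{1,2}_{\sigma,\bm n}(\Omega)\}$ and the trace theorem. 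The stationary estimate gives $\norm{\bu(t)}{2,2}{} \le C(\norm{\bm f(t)}{2}{} + \norm{\partial_t \bu(t)}{2}{} + \beta\norm{\bm h(t)}{1/2,2}{} + \beta\norm{\partial_t \bu(t)}{1/2,2,\Bdry}{})$; squaring and integrating over $[t_0,T]$, all terms on the right are in $L^2_{\text{loc}}$, yielding $\bu \in L^2_{\text{loc}}(0,T;W^{2,2}(\Omega))$.

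The main obstacle is bookkeeping the boundary traces correctly: one must make sure that the part of $\partial_t \bu$ appearing as a boundary force in the stationary problem is exactly the trace used in the weak formulation \eqref{eq:WF}, and that it has the integrability/regularity demanded by \autoref{thm:StokesStationarWeak} (resp.\ \autoref{thm:StokesStationarHilbert}). This is precisely what the decoupling \autoref{lem:Decouple} and the remark following it are designed to handle — $\partial_t \bu(t) \in H$ splits into an interior $L^2(\Omega)$ part and a boundary $L^2(\Bdry)$ part, and $\partial_t \bu(t) \in V$ splits into $W^{1,2}_{\sigma,\bm n}(\Omega)$ with $W^{1/2,2}(\Bdry)$ trace — so the argument reduces to routine application of the stationary theorems plus Sobolev embeddings specific to $d=2$.
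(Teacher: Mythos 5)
Your proposal is correct and follows essentially the same route as the paper: move $\partial_t\bu$ to the right-hand side, use \autoref{thm:StokesEvolutionarStrong}(i) to place it in $L^\infty_{\text{loc}}(0,T;H)\cap L^2_{\text{loc}}(0,T;V)$, apply \autoref{thm:StokesStationarWeak} with exponent $\min\{q,4\}$ (the cap at $4$ being exactly what makes $L^2(\Bdry)\hookrightarrow W^{-1/\min\{q,4\},\min\{q,4\}}(\Bdry)$ work), conclude the $p=\infty$ case by the 2D embedding $W^{1,q}\hookrightarrow L^\infty$, and use $\partial_t\bu\in L^2_{\text{loc}}(0,T;V)$ together with \autoref{thm:StokesStationarHilbert} for the $W^{2,2}$ statement. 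The extra bookkeeping you supply (decoupling of interior and boundary parts, time integration of the stationary estimates) is consistent with the paper's argument, just spelled out in more detail.
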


\begin{proof}
We want to move time derivatives in both main equations to the right-hand sides and apply \autoref{thm:StokesStationarWeak}. To do so, we need to verify 
\begin{align*}
\bm f - \partial_t \bu & \in L^p_{\text{loc}} (0, T; L^{t(\min \{ q, 4\})}(\Omega) ),\\
\beta \bm h - \beta \partial_t \bu & \in L^p_{\text{loc}} (0, T; W^{-\frac{1}{\min \{ q, 4\}}, \min \{ q, 4\}}(\partial \Omega) ).
\end{align*}
For our data $\bm f$, $\bm h$ it holds due to assumptions. Concerning the time derivatives we use \autoref{thm:StokesEvolutionarStrong} to get $\partial_t \bu \in L^\infty_{\text{loc}}(0, T; H)$. It implies two facts. First, $\partial_t \bu \in L^\infty_{\text{loc}}(0, T; L^2(\Omega)) \hookrightarrow L^\infty_{\text{loc}}(0, T; L^{t(\min \{ q, 4\})}(\Omega))$, which is due to $t(\min \{ q, 4\}) \leq 2$. Second, for the boundary term, we obtain $\beta \partial_t \bu \in L^\infty_{\text{loc}}(0, T; L^2(\partial \Omega)) \hookrightarrow L^\infty_{\text{loc}}(0, T; W^{-\frac{1}{\min \{ q, 4\}}, \min \{ q, 4\}}(\partial \Omega))$, because of Sobolev embedding. The case $p=\infty$ follows due to the embedding of $W^{1, q}$, $q > 2$, into $L^\infty$ in the two-dimensional case. The last part uses the fact that $\partial_t \bu \in L^2_{\text{loc}}(0, T; V)$ and \autoref{thm:StokesStationarHilbert}.
\qed \end{proof}

\begin{remark}
If we would assume $\bm F \in L^2(0, T; H)$ instead of both $\bm F$ and $\partial_t \bm F$ to be elements of $L^2(0, T; V^* )$, we could use \autoref{thm:StokesEvolutionarStrong}(ii) to obtain $ \bu \in L^p_{\text{loc}} (0, T; W^{1,  q}(\Omega)) $, $q > 2$, by interpolation.
\end{remark}

\begin{theorem}[$L^p-L^q$ regularity of evolutionary Stokes]\label{thm:StokesEvolutionarRegular}
Let $2 < p < + \infty$, $\Omega \in \mathcal{C}^{1,1}$ and suppose that
\begin{align*}
&\bm F \in L^2 (0, T; V^*) , \\
&\bm f \in L^p(0, T; L^p(\Omega) ), \, \bm h \in L^p(0, T; W^{1-\frac{1}{p}, p}(\partial \Omega) ).
\end{align*}
Moreover, let us assume that either
\begin{itemize}
\item[(i)] $\partial_t \bm F \in L^2(0, T; H)$, or
\item[(ii)] for some $2< \tilde{q} < 4$ there hold
\begin{align*}
&\partial_t \bm F, \partial_{tt} \bm F \in L^2 (0, T; V^*) , \\
& \partial_t \bm f \in L^p(0, T; L^{t(\tilde{q})}(\Omega) ), \, \partial_t \bm h \in L^p(0, T; W^{-\frac{1}{\tilde{q}}, \tilde{q}}(\partial \Omega) ).
\end{align*}
\end{itemize}
Then the unique weak solution of \eqref{eq:StokesEvolutionary1}--\eqref{eq:StokesEvolutionary4} satisfies, for a certain $ q > 2$,
\begin{align*}
\bu & \in L^\infty_{\text{loc}} (0, T; W^{1,q}(\Omega)), \\
\bu & \in L^p_{\text{loc}} (0, T; W^{2,q}(\Omega)), \\
\pi & \in L^p_{\text{loc}} (0, T; W^{1,q}(\Omega)).
\end{align*}
\end{theorem}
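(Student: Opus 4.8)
The plan is to bootstrap the spatial regularity of the linear evolutionary Stokes solution by treating the time derivative $\partial_t\bu$ as part of the right-hand side and invoking the stationary $W^{2,p}$ theory (\autoref{thm:StokesStationarStrong}). The crucial preliminary is to upgrade the time-regularity of $\partial_t\bu$: one differentiates the Galerkin system \eqref{eq:GalerkinStokes} in time, so that $\bv=\partial_t\bu$ solves the same evolutionary Stokes problem with right-hand side $\partial_t\bm F$. Under hypothesis (i), $\partial_t\bm F\in L^2(0,T;H)$, so \autoref{thm:StokesEvolutionarStrong}(ii) applied to $\bv$ yields $\partial_t\bu\in L^\infty_{\text{loc}}(0,T;V)$ and $\partial_{tt}\bu\in L^2_{\text{loc}}(0,T;H)$; under hypothesis (ii), $\partial_t\bm F,\partial_{tt}\bm F\in L^2(0,T;V^*)$ together with \autoref{thm:StokesEvolutionarStrong}(i) applied to $\bv$ gives $\partial_t\bu\in L^\infty_{\text{loc}}(0,T;V)$ as well. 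In either case we obtain $\partial_t\bu\in L^\infty_{\text{loc}}(0,T;V)$.

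Next I would run the bootstrap. Rewriting \eqref{eq:StokesEvolutionary1}, \eqref{eq:StokesEvolutionary3} as a stationary system with interior datum $\bm f-\partial_t\bu$ and boundary datum $\beta\bm h-\beta\partial_t\bu$, one first applies \autoref{thm:StokesEvolutionarFirstStep}: since $\partial_t\bu\in L^\infty_{\text{loc}}(0,T;V)\hookrightarrow L^\infty_{\text{loc}}(0,T;W^{1,r}(\Omega))$ for every $r<\infty$ in 2D, and $\bm f,\bm h$ are as assumed, we get $\bu\in L^\infty_{\text{loc}}(0,T;W^{1,q_1}(\Omega))$ for some $q_1>2$; by the 2D embedding $W^{1,q_1}(\Omega)\hookrightarrow L^\infty(\Omega)$ this also controls $\bu$ in $L^\infty$. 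Now for the $W^{2,q}$ step, apply the stationary result \autoref{thm:StokesStationarStrong} with exponent $q=\min\{q_1,p\}$ (or any $q>2$ for which both data lie in the right class): the interior datum $\bm f-\partial_t\bu$ lies in $L^p_{\text{loc}}(0,T;L^q(\Omega))$ because $\bm f\in L^p(0,T;L^p(\Omega))$ and $\partial_t\bu\in L^\infty_{\text{loc}}(0,T;W^{1,q_1}(\Omega))\hookrightarrow L^\infty_{\text{loc}}(0,T;L^q(\Omega))$, while the boundary datum $\beta\bm h-\beta\partial_t\bu$ lies in $L^p_{\text{loc}}(0,T;W^{1-1/q,q}(\Bdry))$ since $\bm h\in L^p(0,T;W^{1-1/p,p}(\Bdry))$ and the trace of $\partial_t\bu\in L^\infty_{\text{loc}}(0,T;W^{2,2}(\Omega))$ — obtained from the last part of \autoref{thm:StokesEvolutionarFirstStep} applied to $\bv$ — lies in $W^{1/2,2}(\Bdry)\hookrightarrow W^{1-1/q,q}(\Bdry)$ for $q$ close enough to $2$. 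This yields $\bu\in L^p_{\text{loc}}(0,T;W^{2,q}(\Omega))$ and $\pi\in L^p_{\text{loc}}(0,T;W^{1,q}(\Omega))$.

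The $L^\infty_{\text{loc}}(0,T;W^{1,q}(\Omega))$ conclusion was already secured in the first step, so it only remains to assemble the pieces and track the exponents so that a single $q>2$ works simultaneously in all three conclusions — one simply takes $q=\min\{q_1,\tilde q\}$ (resp. the exponent from case (i)), shrinking it toward $2$ if necessary to stay inside the validity range of the stationary embeddings $L^2(\Omega)\hookrightarrow L^{t(q)}(\Omega)$, $L^2(\Bdry)\hookrightarrow W^{-1/q,q}(\Bdry)$ and $W^{1/2,2}(\Bdry)\hookrightarrow W^{1-1/q,q}(\Bdry)$ used above.

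The main obstacle I anticipate is the boundary-datum bookkeeping: to feed \autoref{thm:StokesStationarStrong} one needs $\beta\partial_t\bu$ to have a trace in $W^{1-1/q,q}(\Bdry)$, which is \emph{not} automatic from $\partial_t\bu\in L^\infty_{\text{loc}}(0,T;V)$ alone (that only gives the trace in $L^\rho(\Bdry)$). This is exactly why one needs the extra regularity $\partial_t\bu\in L^2_{\text{loc}}(0,T;W^{2,2}(\Omega))$ for the time-differentiated problem, and hence why hypotheses (i)/(ii) on $\partial_t\bm F$ and $\partial_{tt}\bm F$ are imposed; the delicate point in case (ii) is that $\tilde q<4$ forces $q$ to be chosen $<\tilde q$, and one must check the chain of Sobolev embeddings on $\Bdry$ in 2D closes with the same $q$ on both the interior and boundary sides. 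Once the exponents are reconciled, the rest is a routine application of the stationary estimates slice-by-slice in $t$.
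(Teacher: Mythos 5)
Your overall strategy (differentiate in time, treat $\partial_t\bu$ as part of the right-hand side, feed the stationary $W^{2,q}$ theory) is the paper's strategy, but the step you yourself flag as the main obstacle --- getting the boundary datum $\beta\bm h-\beta\partial_t\bu$ into $W^{1-1/q,q}(\Bdry)$ --- is handled by an argument that does not work. You claim $\partial_t\bu\in L^\infty_{\text{loc}}(0,T;W^{2,2}(\Omega))$ ``from the last part of \autoref{thm:StokesEvolutionarFirstStep} applied to $\bv$''. That part of the lemma requires the boundary datum of the $\bv$-problem, i.e.\ $\partial_t\bm h$, to lie in $L^2_{\text{loc}}(0,T;W^{\frac12,2}(\Bdry))$; under hypothesis (i) you only have $\partial_t\bm h\in L^2(0,T;L^2(\Bdry))$ (from $\partial_t\bm F\in L^2(0,T;H)$), and under (ii) only $\partial_t\bm h\in L^p(0,T;W^{-1/\tilde q,\tilde q}(\Bdry))$, a negative-order space --- so the $W^{2,2}$ upgrade is unavailable (and in any case the lemma would give $L^2_{\text{loc}}$, not $L^\infty_{\text{loc}}$, in time). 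Worse, even granting your trace claim, the embedding $W^{\frac12,2}(\Bdry)\hookrightarrow W^{1-\frac1q,q}(\Bdry)$ is false for \emph{every} $q>2$: on the one-dimensional boundary one needs $\tfrac12-\tfrac12\ge (1-\tfrac1q)-\tfrac1q=1-\tfrac2q$, which forces $q\le 2$. This is precisely the obstruction the paper points out ($\partial_t\bu\in L^2_{\text{loc}}W^{\frac12,2}(\Bdry)$ suffices only for \autoref{thm:StokesStationarHilbert} or \autoref{thm:StokesStationarWeak}, not for \autoref{thm:StokesStationarStrong}), so your argument collapses exactly at the decisive point.

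The fix, which is what the paper does, is to upgrade $\partial_t\bu$ by one spatial derivative \emph{in an $L^q$ framework with $q>2$}, not in $W^{2,2}$: under (i), apply \autoref{thm:StokesEvolutionarStrong}(ii) to $\bv=\partial_t\bu$ and keep \emph{both} outputs $\bv\in L^\infty_{\text{loc}}(0,T;V)\cap L^2_{\text{loc}}(0,T;W^{1,4}(\Omega))$ (you discard the $W^{1,4}$ part), then interpolate to get $\partial_t\bu\in L^p_{\text{loc}}(0,T;W^{1,q}(\Omega))$ for some $q>2$; under (ii), apply \autoref{thm:StokesEvolutionarFirstStep} directly to $\bv$ with exponent $\tilde q\in(2,4)$, using the assumed $\partial_t\bm f$, $\partial_t\bm h$, $\partial_{tt}\bm F$. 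Then the trace theorem $W^{1,q}(\Omega)\to W^{1-\frac1q,q}(\Bdry)$ with the \emph{same} $q>2$ gives the needed boundary regularity, and \autoref{thm:StokesStationarStrong} closes the bootstrap. Two further slips to correct: $V\hookrightarrow W^{1,r}(\Omega)$ for all $r<\infty$ is false (only $V\hookrightarrow L^r(\Omega)$ in 2D), and your first step cannot yield $\bu\in L^\infty_{\text{loc}}(0,T;W^{1,q_1}(\Omega))$ from \autoref{thm:StokesEvolutionarFirstStep} with $p=\infty$, since $\bm f,\bm h$ are only $L^p$ in time; the paper obtains $\bu\in L^p_{\text{loc}}(0,T;W^{1,q}(\Omega))$ there (by interpolating $L^2_{\text{loc}}W^{2,2}$ with $L^\infty_{\text{loc}}W^{1,2}$) and recovers the $L^\infty_{\text{loc}}(0,T;W^{1,q}(\Omega))$ statement only at the end, from $\bu,\partial_t\bu\in L^p_{\text{loc}}(0,T;W^{1,q}(\Omega))$ with $p>2$.
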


\begin{proof}
All assumptions of the previous lemma are satisfied. Therefore, we can interpolate between $L^2_{\text{loc}} (0, T; W^{2, 2}(\Omega))$ and $L^\infty_{\text{loc}}(0, T; W^{1, 2}(\Omega))$ to obtain that for a certain $q \in (2, p)$ there holds
$$\bu \in L^p_{\text{loc}}(0, T; W^{1, q}(\Omega)) . $$
Let us recall that \autoref{thm:StokesEvolutionarStrong} gives us
$$\partial_t \bu  \in L^\infty_{\text{loc}}(0, T; H) \cap L^2_{\text{loc}}(0, T; W^{1,2}(\Omega)),$$
and again, by a similar interpolation, we obtain
$$ \partial_t \bu \in L^p_{\text{loc}}(0, T; L^q (\Omega)),  $$
which gives us
$$\bm f - \partial_t \bu \in L^p_{\text{loc}}(0, T; L^q (\Omega)) . $$

Notice that we do not have enough regularity of the time derivative on the boundary to apply \autoref{thm:StokesStationarStrong}. We have only $\partial_t \bu \in L^2_{\text{loc}}(0, T; W^{ \frac{1}{2}, 2} (\partial \Omega)) \cap L^p_{\text{loc}}(0, T; W^{-\frac{1}{q}, q} (\partial \Omega))$, which is enough just for \autoref{thm:StokesStationarHilbert} or \autoref{thm:StokesStationarWeak}. 

To improve the time derivative we recall (as was argued during the proof of \autoref{thm:StokesEvolutionarStrong}) that the function $\bv = \partial_t \bu$ satisfies the same equation as $\bu$, just with $\partial_t \bm f$, $\partial_t \bm h$ instead of $\bm f$, $\bm h$. If there holds (i), we apply \autoref{thm:StokesEvolutionarStrong}(ii) to obtain 
$$ \bv  \in L^\infty_{\text{loc}} (0, T; W^{1,2}(\Omega)) \cap L^2_{\text{loc}}(0, T; W^{1,4}(\Omega)), $$
which interpolates into
$$ \partial_t \bu \in L^p_{\text{loc}}(0, T; W^{1, q} (\Omega)) . $$
If there holds (ii), we use \autoref{thm:StokesEvolutionarFirstStep} for $\bv$ and get
$$ \partial_t \bu = \bv \in L^p_{\text{loc}} (0, T; W^{1, \tilde{q} }(\Omega)). $$
Both $\bu$ and $\partial_t \bu$ belong into $L^p_{\text{loc}}(0, T; W^{1, q} (\Omega))$, for some $p, q > 2$, therefore
$$\bu \in L^\infty_{\text{loc}}(0, T; W^{1, q} (\Omega)).$$
In any case, we have $W^{1,q}(\Omega) \hookrightarrow  W^{ 1-\frac{1}{q}, q} (\partial \Omega) $. This means that for some $q > 2$ we have
$$  \beta \bm h - \beta \partial_t \bu  \in L^p_{\text{loc}}(0, T; W^{1-\frac{1}{q},q}(\partial \Omega)) . $$
This fact enables as us to invoke \autoref{thm:StokesStationarStrong} and get the final result.
\qed \end{proof}

In the following theorem, we prove the maximal-in-time regularity. 
The case $p=2$ is special, hence we formulate it separately.

\begin{theorem}[Maximal-in-time regularity of evolutionary  Stokes]\label{thm:StokesEvolutionarMaximal}
\noindent
\begin{itemize}
\item[(i)] Let $\Omega \in \mathcal{C}^{1,1}$ and assume
\begin{align*}
&\bm F,  \partial_t \bm F  \in L^2(0, T; V^*) , \\
&\bm f \in L^\infty(0, T; L^2(\Omega) ), \, \bm h \in L^\infty(0, T; W^{\frac{1}{2}, 2}(\partial \Omega) ).
\end{align*}
Moreover, let there hold either
\begin{align*}
 \partial_{tt} \bm F \in L^2(0, T; V^*)
\end{align*}
or 
\begin{align*}
\partial_t \bm F \in L^2(0, T; H) .
\end{align*}
Then the unique weak solution of \eqref{eq:StokesEvolutionary1}--\eqref{eq:StokesEvolutionary4} satisfies
\begin{align*}
\bu & \in L^\infty_{\text{loc}}(0, T; W^{2,2}(\Omega)) , \\
\pi & \in L^\infty_{\text{loc}}(0, T; W^{1,2}(\Omega)) .
\end{align*}
\item[(ii)] Let us now assume that $\Omega \in \mathcal{C}^{1,1}$ and for some $2 < q < 4$ there hold
\begin{align*}
&\bm F,  \partial_t \bm F ,  \partial_{tt} \bm F  \in L^2(0, T; V^*) ,   \\
&\bm f \in L^\infty(0, T; L^q(\Omega) ), \, \partial_t \bm f \in L^\infty(0, T; L^{t(q)}(\Omega) ) ,  \\
&\bm h \in L^\infty(0, T; W^{1-\frac{1}{q}, q}(\partial \Omega) ), \, \partial_t \bm h \in L^\infty(0, T; W^{-\frac{1}{q}, q}(\Omega) ).  
\end{align*}
Then we get
\begin{align*}
\bu & \in L^\infty_{\text{loc}} (0, T; W^{2,q}(\Omega)) , \\
\pi & \in L^\infty_{\text{loc}} (0, T; W^{1,q}(\Omega) ) .
\end{align*}
\end{itemize}

\end{theorem}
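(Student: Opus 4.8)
The plan is to follow the same strategy as in \autoref{thm:StokesEvolutionarRegular}: for a.e.\ fixed $t$ we regard $\partial_t\bu$ as part of the right-hand side, so that $(\bu(t),\pi(t))$ solves the \emph{stationary} Stokes system \eqref{eq:StokesStationary1}--\eqref{eq:StokesStationary4}, and then invoke the stationary elliptic theory --- \autoref{thm:StokesStationarHilbert} for part (i), \autoref{thm:StokesStationarStrong} for part (ii). The new feature, compared with \autoref{thm:StokesEvolutionarRegular}, is that the forcing $\bm f$, $\bm h$ is now assumed $L^\infty$ in time rather than $L^p$, so the bottleneck is to show that $\partial_t\bu$ is, locally in time, in $L^\infty(0,T;\cdot)$ for the spaces dictated by the stationary estimates. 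Because of the boundary term $[(\bm{Du})\bm n]_\tau$ in \eqref{eq:StokesEvolutionary3}, a bound $\partial_t\bu\in L^\infty_{\text{loc}}(0,T;H)$ is not enough: the trace of $\partial_t\bu$ must live in $W^{1/2,2}(\Bdry)$ (resp.\ $W^{1-1/q,q}(\Bdry)$), which forces control of $\partial_t\bu$ in $W^{1,2}(\Omega)$ (resp.\ $W^{1,q}(\Omega)$) uniformly in time. This is precisely why the extra hypotheses on $\partial_{tt}\bm F$ (or $\partial_t\bm F\in L^2(0,T;H)$) are imposed, and this step is the main obstacle.

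\emph{Step 1 (improving the time derivative).} As recalled in the proof of \autoref{thm:StokesEvolutionarStrong}, $\bv:=\partial_t\bu$ solves the same linear Stokes system, with data $\partial_t\bm F=(\partial_t\bm f,\partial_t\bm h)$ (rigorously one differentiates the Galerkin identity \eqref{eq:GalerkinStokes} in time). For part (i): if $\partial_{tt}\bm F\in L^2(0,T;V^*)$, apply \autoref{thm:StokesEvolutionarStrong}(i) to $\bv$ (its "$\partial_t\bm F$" is $\partial_{tt}\bm F\in L^2(0,T;V^*)$) to get $\bv\in L^\infty_{\text{loc}}(0,T;V)$; if instead $\partial_t\bm F\in L^2(0,T;H)$, apply \autoref{thm:StokesEvolutionarStrong}(ii) to $\bv$ (its "$\bm F$" is $\partial_t\bm F\in L^2(0,T;H)$) to get again $\bv\in L^\infty_{\text{loc}}(0,T;V)$. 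In both cases $\partial_t\bu\in L^\infty_{\text{loc}}(0,T;W^{1,2}_{\sigma,\bm n}(\Omega))$, hence $\partial_t\bu\in L^\infty_{\text{loc}}(0,T;L^2(\Omega))$ and, by the trace theorem, $\trace\,\partial_t\bu\in L^\infty_{\text{loc}}(0,T;W^{1/2,2}(\Bdry))$. For part (ii): apply \autoref{thm:StokesEvolutionarFirstStep} to $\bv$ with $p=+\infty$ and the exponent $q\in(2,4)$, which is legitimate since $\partial_t\bm F,\partial_{tt}\bm F\in L^2(0,T;V^*)$, $\partial_t\bm f\in L^\infty(0,T;L^{t(q)}(\Omega))$ and $\partial_t\bm h\in L^\infty(0,T;W^{-1/q,q}(\Bdry))$; this yields $\partial_t\bu=\bv\in L^\infty_{\text{loc}}(0,T;W^{1,q}(\Omega))$, and in particular $\trace\,\partial_t\bu\in L^\infty_{\text{loc}}(0,T;W^{1-1/q,q}(\Bdry))$.

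\emph{Step 2 (stationary regularity).} Rewriting \eqref{eq:StokesEvolutionary1} and \eqref{eq:StokesEvolutionary3}, for a.e.\ $t$ the pair $(\bu(t),\pi(t))$ solves \eqref{eq:StokesStationary1}--\eqref{eq:StokesStationary4} with interior datum $\tilde{\bm f}:=\bm f-\partial_t\bu$ and boundary datum $\tilde{\bm h}:=\beta\bm h-\beta\,\trace\,\partial_t\bu$. For (i): Step 1 together with $\bm f\in L^\infty(0,T;L^2(\Omega))$ and $\bm h\in L^\infty(0,T;W^{1/2,2}(\Bdry))$ gives $\tilde{\bm f}\in L^\infty_{\text{loc}}(0,T;L^2(\Omega))$ and $\tilde{\bm h}\in L^\infty_{\text{loc}}(0,T;W^{1/2,2}(\Bdry))$; \autoref{thm:StokesStationarHilbert} then bounds $\|\bu(t)\|_{2,2}+\|\pi(t)\|_{1,2}$ by $C(\|\tilde{\bm f}(t)\|_2+\|\tilde{\bm h}(t)\|_{1/2,2})$, and taking the supremum over compact subintervals of $(0,T)$ yields $\bu\in L^\infty_{\text{loc}}(0,T;W^{2,2}(\Omega))$, $\pi\in L^\infty_{\text{loc}}(0,T;W^{1,2}(\Omega))$. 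For (ii): argue identically with $q$ in place of $2$, using $W^{1,q}(\Omega)\hookrightarrow L^q(\Omega)$ and $W^{1,q}(\Omega)\hookrightarrow W^{1-1/q,q}(\Bdry)$ so that $\tilde{\bm f}\in L^\infty_{\text{loc}}(0,T;L^q(\Omega))$ and $\tilde{\bm h}\in L^\infty_{\text{loc}}(0,T;W^{1-1/q,q}(\Bdry))$; \autoref{thm:StokesStationarStrong} with exponent $q$ then gives $\bu\in L^\infty_{\text{loc}}(0,T;W^{2,q}(\Omega))$ and $\pi\in L^\infty_{\text{loc}}(0,T;W^{1,q}(\Omega))$. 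As in the earlier theorems, the restriction to subintervals bounded away from $t=0$ cannot be removed since no compatibility condition is prescribed for $\partial_t\bu(0)$ (equivalently, for $\bu_0$ and $\bm F(0)$).

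To summarize, the only genuinely delicate point is Step 1: the bootstrap for $\bu$ must be pushed one order further onto $\partial_t\bu$, and it is exactly there that the hypotheses on the second time derivative of the data (or $\partial_t\bm F\in L^2(0,T;H)$) are consumed --- without them one would only obtain $\partial_t\bu\in L^\infty_{\text{loc}}(0,T;H)$, whose trace is too rough to enter the stationary $W^{2,\cdot}$ estimates through the dynamic boundary condition \eqref{eq:StokesEvolutionary3}.
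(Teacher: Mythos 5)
Your proposal is correct and follows essentially the same route as the paper: regard $\partial_t\bu$ as part of the data, upgrade it to $L^\infty_{\text{loc}}(0,T;V)$ (resp.\ $L^\infty_{\text{loc}}(0,T;W^{1,q}(\Omega))$) by applying \autoref{thm:StokesEvolutionarStrong} (resp.\ \autoref{thm:StokesEvolutionarFirstStep}) to $\bv=\partial_t\bu$, and then conclude with the stationary results \autoref{thm:StokesStationarHilbert} and \autoref{thm:StokesStationarStrong}. Your identification of the trace of $\partial_t\bu$ in the dynamic boundary condition as the bottleneck, and of where the hypotheses on $\partial_{tt}\bm F$ or $\partial_t\bm F\in L^2(0,T;H)$ are consumed, matches the paper's argument.
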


\begin{proof}
Because of \autoref{thm:StokesEvolutionarStrong} we have $\partial_t \bu \in L^\infty_{\text{loc}}(0, T; H)$, so
$$ \bm f -  \partial_t \bu \in L^\infty_{\text{loc}}(0, T; L^2(\Omega)) .$$
Considering the boundary term we have only
$$ \partial_t \bu \in L^\infty_{\text{loc}}(0, T; L^2(\partial \Omega)) \cap L^2_{\text{loc}}(0, T; W^{\frac{1}{2},2}(\partial \Omega)), $$
which is not enough for \autoref{thm:StokesStationarHilbert} to apply. To improve it, we apply either the first or the last part of \autoref{thm:StokesEvolutionarStrong} to the function $\bv=\partial_t \bu$. In any case, we obtain
$$ \bv \in L^\infty_{\text{loc}} (0, T; V), $$
and therefore
$$ \beta \partial_t \bu \in L^\infty_{\text{loc}} (0, T; W^{\frac{1}{2}, 2}(\partial \Omega)). $$
Thanks to our assumption on $\bm h$ we can use \autoref{thm:StokesStationarHilbert} and get the first part of our statement.

It remains to show (ii). As before, we already have $\partial_t \bu \in L^\infty_{\text{loc}} (0, T; V)$. Because of $W^{1, 2}(\Omega) \hookrightarrow L^q (\Omega)$, for any $ q  < +\infty$, we achieve
$$ \bm f -  \partial_t \bu \in L^\infty_{\text{loc}}(0, T; L^q(\Omega)) .  $$
To apply \autoref{thm:StokesStationarStrong} we need to get $\partial_t \bu \in L^\infty_{\text{loc}}(0, T; W^{1,q}(\Omega)) $, since then 
$$\beta \bm h - \beta \partial_t \bu \in L^\infty_{\text{loc}}(0, T; W^{1-\frac{1}{q}, q}(\partial \Omega) )$$
will be satisfied. Here, it is enough to apply \autoref{thm:StokesEvolutionarFirstStep} to $\bv = \partial_t \bu$, as in the previous theorem.
\qed \end{proof}

\section{Regularity for non-linear systems}

At this point we are prepared to focus on the more complicated systems, see \eqref{eq:Equation}--\eqref{eq:Initial}. First of all, we add the convective term to our equation in $\Omega$ and some non-linearity in $\bu$ into the equation on $\partial \Omega$. We will also cover the case of non-constant, yet
bounded viscosity. The whole procedure somehow mimics the method in \cite{Kaplicky}. 

\subsection{Existence of the solution}
As in the previous chapter, the starting point is again the Galerkin approximation, i.e. we look for the solution in the form
\begin{align*}
\bu^n & = \sum_{k=1}^n c_k^n(t) \bm \omega_k , \\
\bm S^n & = \bm S (\bm{Du} ^n) , \\
\bm s^n & = \bm s (\bu^n)
\end{align*}
that satisfies, for any $k = 1, \dots , n$, the system
\begin{align}
\begin{split}
(\partial_t \bu^n, \bm \omega_k)_H  + \int\limits_{\Omega} \bm S^n  : \bm{D\omega}_k & +  \int\limits_{\partial \Omega} \bm s^n \cdot \bm \omega_k \\
& = \left< \bm F, \bm \omega_k \right> - \int\limits_{\Omega} (\bu^n \cdot \nabla ) \bu^n \cdot \bm \omega_k  \label{eq:Galerkin}
\end{split}
\end{align}
together with the initial condition $c_k^n (0) = (\bu_0, \bm \omega_k)_H$.

\begin{theorem}[Existence of the weak solution for NS]\label{thm:NavierStokesExistence}
The problem \eqref{eq:Equation}--\eqref{eq:GrowthBoundary} with 
$$\bu_0 \in H, \, \Omega \in \mathcal{C}^{0, 1}, \, \bm F \in L^{2} (0, T; V^*),  $$
has a weak solution.
\end{theorem}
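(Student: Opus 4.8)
The plan is to establish existence of a weak solution to the nonlinear Navier–Stokes problem with dynamic boundary conditions by the Galerkin method, using the spectral basis $\{\bm\omega_k\}$ from \autoref{thm:Basis}. The approximate system \eqref{eq:Galerkin} is a finite-dimensional ODE system for the coefficients $c_k^n(t)$; local-in-time solvability follows from the Carathéodory (or Peano) theorem once we check the nonlinear terms $\int_\Omega \bm S^n : \bm{D\omega}_k$, $\int_{\partial\Omega}\bm s^n\cdot\bm\omega_k$, and the convective term are continuous in the $c_k^n$, which is immediate from the continuity of $\bm S$, $\bm s$ and the smoothness of each $\bm\omega_k$. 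Global-in-time existence for the ODE then follows once we have the a priori bound below.

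First I would derive the energy estimate: multiply \eqref{eq:Galerkin} by $c_k^n(t)$, sum over $k=1,\dots,n$, and observe that the convective term vanishes, $\int_\Omega (\bu^n\cdot\nabla)\bu^n\cdot\bu^n = 0$, since $\diver\bu^n=0$ and $\bu^n\cdot\bm n=0$ on $\Bdry$. Coercivity \eqref{eq:Coercivity} with $\bm E=\bm 0$ (using $\bm S(\bm 0)=\bm 0$) gives $\int_\Omega \bm S^n:\bm{Du}^n \ge c_1\|\bm{Du}^n\|_2^2$, and \eqref{eq:CoercivityBoundary} with $\bv=\bm 0$ gives $\int_{\Bdry}\bm s^n\cdot\bu^n \ge \alpha c_3\|\bu^n\|_{L^2(\Bdry)}^2$ (dropping the nonnegative higher-order part). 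Hence, after absorbing $\langle \bm F,\bu^n\rangle$ by Young's inequality using the $V$-norm on the left, we obtain
\begin{align*}
\tfrac12\ddt\|\bu^n\|_H^2 + c\,\|\bu^n\|_V^2 \le C\,\|\bm F\|_{V^*}^2,
\end{align*}
which, after integration in time, yields the uniform bounds $\bu^n\in L^\infty(0,T;H)\cap L^2(0,T;V)$ (note $\|\bu^n(0)\|_H \le \|\bu_0\|_H$ by the projection property in \autoref{thm:Basis}). The growth conditions \eqref{eq:Growth}, \eqref{eq:GrowthBoundary} then give $\bm S^n$ bounded in $L^2(0,T;L^2(\Omega))$ and $\bm s^n$ bounded in $L^{s'}(0,T;L^{s'}(\Bdry))$ (the boundary trace of $V$ lies in every $L^q(\Bdry)$ in 2D, so this is fine); the convective term $(\bu^n\cdot\nabla)\bu^n$ is bounded in $L^2(0,T;V^*)$ by the usual 2D interpolation ($\|\bu^n\|_{L^4}\le C\|\bu^n\|_2^{1/2}\|\bu^n\|_V^{1/2}$). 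Testing \eqref{eq:Galerkin} against $P^n\bm\varphi$ and using these bounds gives $\partial_t\bu^n$ bounded in $L^2(0,T;V^*)$.

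Next I would pass to the limit. By reflexivity and the Aubin–Lions lemma (with $V\hookrightarrow\hookrightarrow H\hookrightarrow V^*$, the first embedding compact by Rellich–Kondrachov for $W^{1,2}$), extract a subsequence with $\bu^n\rightharpoonup\bu$ weakly-* in $L^\infty(0,T;H)$, weakly in $L^2(0,T;V)$, $\partial_t\bu^n\rightharpoonup\partial_t\bu$ weakly in $L^2(0,T;V^*)$, and $\bu^n\to\bu$ strongly in $L^2(0,T;H)$ and (by a further trace/Aubin–Lions argument, or interpolation) in $L^2(0,T;L^2(\Bdry))$ and a.e. The strong $L^2$ convergence handles the convective term (pass to the limit in $\int_\Omega(\bu^n\cdot\nabla)\bm\varphi\cdot\bu^n$ after integrating by parts). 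The main obstacle is identifying the limits of the nonlinear terms $\bm S(\bm{Du}^n)$ and $\bm s(\bu^n)$: since we only have weak convergence of $\bm{Du}^n$, these do not pass to the limit directly. I would handle this by the standard monotonicity (Minty) trick: the coercivity \eqref{eq:Coercivity}, \eqref{eq:CoercivityBoundary} encode monotonicity of $\bm S$ and $\bm s$, so using the energy estimate to control $\limsup \int_0^T(\int_\Omega\bm S^n:\bm{Du}^n + \int_{\Bdry}\bm s^n\cdot\bu^n)$ from above by the corresponding expression for the limit (this uses $\bu^n(T)$ information — weak lower semicontinuity of $\|\cdot\|_H$ — together with the limit equation tested by $\bu$), Minty's lemma identifies the weak limits as $\bm S(\bm{Du})$ and $\bm s(\bu)$ for a.e. $t$. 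This gives \eqref{eq:WF}. Finally, the initial condition $\bu(0)=\bu_0$ in $H$ follows from $\bu\in\mathcal C([0,T];H)$ (consequence of $\bu\in L^2(0,T;V)$, $\partial_t\bu\in L^2(0,T;V^*)$) and weak convergence $\bu^n(0)=P^n\bu_0\to\bu_0$, and the energy equality follows from testing the limit equation by $\bu$ itself (legitimate since $\bu\in L^2(0,T;V)$) — actually only the energy inequality comes for free, and equality then follows because in 2D the solution is regular enough; I would note that, alternatively, one first derives the energy inequality from weak lower semicontinuity and equality by the standard argument that $\bu$ is an admissible test function. With uniqueness being standard (subtract two solutions, use \eqref{eq:Coercivity}, \eqref{eq:CoercivityBoundary}, the 2D estimate on the convective difference, and Grönwall), the proof is complete.
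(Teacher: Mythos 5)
Your proposal is correct and follows essentially the same route as the paper, whose proof is only a sketch (Galerkin approximation with the spectral basis, the energy estimate with vanishing convective term, the duality bound on $\partial_t \bu^n$, and a standard monotonicity argument for the nonlinear terms, with details deferred to the cited references). Your fleshed-out version, including the Minty trick for $\bm S(\bm{Du}^n)$ and $\bm s(\bu^n)$ and the Aubin--Lions compactness for the convective term, is precisely what the paper intends; the only minor remark is that the $L^{s'}$ bound on $\bm s^n$ is most cleanly obtained by keeping the $\int_{\Bdry}|\bu^n|^s$ term in the energy estimate rather than relying on the trace embedding alone.
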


\begin{proof}
The proof is quite standard, see \cite{Maringova} or \cite{EM-dis} for more details. We multiply \eqref{eq:Galerkin} by $c_k^n$ and sum over $k = 1, \dots, n$ to obtain
\begin{align*}
\frac{1}{2} \cdot \frac{{\rm d}}{{\rm d}t} ||\bu^n ||_H^2  + \int\limits_{\Omega} \bm S^n : \bm{Du}^n +  \int\limits_{\partial \Omega} \bm s^n \cdot \bu^n = \left< \bm F, \bu^n \right>.
\end{align*}
Let us note that the convective term vanishes thanks to \eqref{eq:Div} and \eqref{eq:Impermeability}. Next, we use \eqref{eq:CoercivityBoundary} together with Korn's and Young's inequalities to get
\begin{align*}
 \frac{{\rm d}}{{\rm d}t} ||\bu^n ||_H^2  +  c|| \bu^n ||_{V}^2 \leq  C|| \bm F ||_{V^*}^{2}.
\end{align*}
Of course, we can also get the control of $\int\limits_{\partial \Omega} | \bu |^s$. This identity gives rise to uniform estimates in the form
\begin{align*}
\bu^n &\in L^\infty (0, T; H) \cap L^2 (0, T; V), \\
\partial_t \bu^n   & \in L^{2} (0, T; V^*),
\end{align*}
where the second one follows from the usual duality argument. Finally, we multiply \eqref{eq:Galerkin} by smooth function in time and proceed with the limit as $n \rightarrow +\infty$. Let us remark that in the non-linear terms we apply a standard monotonicity argument.
\qed \end{proof}

\begin{theorem}[Continuous dependence]\label{thm:NavierStokesUniqueness}
Let $\bu$, $\bv$ be weak solutions of \eqref{eq:Equation}--\eqref{eq:GrowthBoundary} with the same right-hand sides, then $\bm w := \bv  - \bu$ satisfies the inequality
\begin{align*}
 \frac{{\rm d}}{{\rm d}t} || \bm w||_H^2 + &  c || \bm {Dw} ||_2^2   +c \int\limits_{\partial \Omega} ( 1+ |\bv|^{s-2} + |\bu|^{s-2} ) |\bm w |^2  \leq C \big( 1 + || \bu ||_{V}^2 \big)
||\bm w ||_H^2 .
\end{align*}

Moreover, for any $t\in(0,T)$,
\begin{align} 
|| \bm w (t) ||_H^2 & \leq C || \bm w (0) ||_H^2 ,  \label{eq:ControlOfDifference1} \\
\int\limits_0^t ||\nabla \bm w ||_2^2  & \leq  C ||\bm w (0)||_H^2 \label{eq:ControlOfDifference2} . 
\end{align}
In particular, there exists at most one weak solution.
\end{theorem}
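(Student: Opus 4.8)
The plan is to subtract the weak formulations \eqref{eq:WF} written for $\bv$ and for $\bu$ (the data $\bm F$ being the same) and to test the resulting identity with $\bm\varphi=\bm w(t)\in V$; this is admissible for a.e.\ $t$ since $\bm w\in L^2(0,T;V)$. Because $\bm w\in L^2(0,T;V)$ and $\partial_t\bm w\in L^2(0,T;V^*)$, the standard lemma for the Gelfand triple $V\hookrightarrow H\hookrightarrow V^*$ shows that $t\mapsto\|\bm w(t)\|_H^2$ is absolutely continuous with $\langle\partial_t\bm w,\bm w\rangle=\tfrac12\ddt\|\bm w\|_H^2$. The diffusive term then reads $\intom(\bm S(\bm{Dv})-\bm S(\bm{Du})):\bm{Dw}$, which by the coercivity \eqref{eq:Coercivity} is bounded below by $c_1\|\bm{Dw}\|_2^2$; the boundary term reads $\int_{\Bdry}(\bm s(\bv)-\bm s(\bu))\cdot\bm w$, bounded below by $\alpha c_3\int_{\Bdry}(1+|\bv|^{s-2}+|\bu|^{s-2})|\bm w|^2$ via \eqref{eq:CoercivityBoundary}. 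Both quantities are kept on the left-hand side.

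The only term requiring real work is the convective one, $-\intom\big[(\bv\cdot\nabla)\bv-(\bu\cdot\nabla)\bu\big]\cdot\bm w$. Writing $(\bv\cdot\nabla)\bv-(\bu\cdot\nabla)\bu=(\bv\cdot\nabla)\bm w+(\bm w\cdot\nabla)\bu$ and using $\diver\bv=0$ together with $\bv\cdot\bm n=0$ on $\Bdry$ (so that $\intom(\bv\cdot\nabla)\bm w\cdot\bm w=\tfrac12\intom(\bv\cdot\nabla)|\bm w|^2=0$), the whole convective contribution reduces to $-\intom(\bm w\cdot\nabla)\bu\cdot\bm w$, which I bound by $\|\bm w\|_{L^4(\Omega)}^2\,\|\nabla\bu\|_2$. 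The two-dimensional Ladyzhenskaya inequality $\|\bm w\|_{L^4}^2\le C\|\bm w\|_{L^2}\|\bm w\|_{W^{1,2}}$, combined with the Korn equivalence $\|\bm w\|_{W^{1,2}}\le C\|\bm w\|_V$ (\autoref{thm:Korn}) and Young's inequality, gives $\|\bm w\|_{L^4}^2\|\nabla\bu\|_2\le\varepsilon\|\bm w\|_V^2+C_\varepsilon\|\nabla\bu\|_2^2\|\bm w\|_{L^2(\Omega)}^2$. Since for fixed $\alpha$ one has $\|\bm w\|_V^2\le C(\|\bm{Dw}\|_2^2+\|\bm w\|_{L^2(\Bdry)}^2)$ directly from the definition of the $V$-norm, and since $\int_{\Bdry}(1+|\bv|^{s-2}+|\bu|^{s-2})|\bm w|^2\ge\|\bm w\|_{L^2(\Bdry)}^2$, choosing $\varepsilon$ small enough lets the $\varepsilon$-term be absorbed by the two dissipation terms already present on the left. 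Estimating $\|\nabla\bu\|_2^2\le C\|\bu\|_V^2$ (Korn) and $\|\bm w\|_{L^2(\Omega)}^2\le\|\bm w\|_H^2$ in what remains yields exactly the asserted differential inequality, with $C(1+\|\bu\|_V^2)$ on the right.

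For the a priori bounds, recall that $t\mapsto\|\bu(t)\|_V^2$ lies in $L^1(0,T)$ because $\bu\in L^2(0,T;V)$. Dropping the nonnegative dissipation terms and applying Grönwall's lemma to $\ddt\|\bm w\|_H^2\le C(1+\|\bu\|_V^2)\|\bm w\|_H^2$ gives \eqref{eq:ControlOfDifference1}, with a constant depending only on $T$ and $\|\bu\|_{L^2(0,T;V)}$. Integrating the full differential inequality over $(0,t)$, inserting \eqref{eq:ControlOfDifference1} into the right-hand side, and again invoking Korn together with the lower bound on the boundary integral to pass from $\|\bm{Dw}\|_2^2$ (plus the controlled boundary term) to $c\|\nabla\bm w\|_2^2$ yields \eqref{eq:ControlOfDifference2}. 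Uniqueness is then immediate: $\bm w(0)=\bm 0$ forces $\bm w\equiv\bm 0$, i.e.\ $\bu=\bv$.

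I expect the main obstacle to be precisely the convective estimate and the bookkeeping around it: one must check that the $W^{1,2}$-gradient produced by interpolating the convective term is genuinely absorbed by the available dissipation $c_1\|\bm{Dw}\|_2^2$ \emph{together with} the boundary dissipation $\alpha c_3\int_{\Bdry}(1+\cdots)|\bm w|^2$ — the interior dissipation alone controls only $\|\bm{Dw}\|_2^2$ rather than the full $\|\bm w\|_{W^{1,2}}^2$, so the boundary term is indispensable here. The remaining ingredients (the Gelfand-triple chain rule, monotonicity of the diffusion and slip terms, Grönwall) are routine.
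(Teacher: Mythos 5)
Your argument is correct and follows essentially the same route as the paper's (much terser) proof: subtract the two weak formulations, test with $\bm w$, use the coercivity of $\bm S$ and $\bm s$ for the dissipation, handle the convective term via Ladyzhenskaya's interpolation \eqref{eq:Interpolation2} with absorption, then Grönwall for \eqref{eq:ControlOfDifference1} and integration plus Korn for \eqref{eq:ControlOfDifference2}. Your remark that the absorption of $\varepsilon\|\bm w\|_V^2$ uses the boundary dissipation (or, equivalently, the second form of Korn's inequality with $\|\bm w\|_{L^2(\Omega)}$) is a correct and useful clarification of a detail the paper leaves implicit.
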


\begin{proof}
We take the difference of our equations and test it by the difference of two solutions, we use \eqref{eq:Interpolation2} in our estimates and obtain the desired inequality. 

Finally, we use Grönwall's inequality to show \eqref{eq:ControlOfDifference1}. By integration, and Korn's inequality, we can also control $\int\limits_0^t || {\bm w} ||_V^2 $. This implies the estimate $\int\limits_0^t || \bm w ||_V^2 \leq  C ||\bm w (0)||_H^2 $. Inequality \eqref{eq:ControlOfDifference2} then instantly follows and uniqueness is trivial.
\qed \end{proof}

\begin{remark}
The previous two theorems, together with the existence of the attractor, hold true also for $\bm S$ with more general growth and coercivity conditions. Additionally, no potential of $\bm S$ is actually needed. Moreover, we are able to do that also in the situation, where $\bm s$ is connected with $\bu$ via a so-called maximal monotone graph. For details, including the 3D setting, see \cite{PrZe22}.
\par
We note, however, that in the case of constitutive graphs, we are not 
able to obtain additional (time) regularity as in 
\autoref{thm:NavierStokesRegular}.
The problem of the attractor dimension is also largely open for this
important class of problems.
\end{remark}

\subsection{Regularity for NS system}

Let us now focus on the situation where $\bm S (\bm {Du}) 
= \nu \bm {Du}$, where $\nu>0$ is a constant; without loss of generality we
will temporarily set $\nu=1$. In other words, we want to learn how to deal with the non-linearity given by the presence of the convective term in $\Omega$ and the function $\bm s$ on its boundary.

\begin{theorem}[Regularity via Galerkin of NS]\label{thm:NavierStokesRegular}
Let us assume
\begin{align*}
&\Omega \in \mathcal{C}^{1,1}, \, \bu_0  \in V \cap W^{2, 2}(\Omega), \\
&\bm F, \partial_t \bm F  \in L^2(0, T; V^*) ,  \\
&\bm f(0) \in L^2(\Omega)  , \, \bm h(0) \in W^{\frac{1}{2},2}(\partial \Omega) .
\end{align*}
Then the unique weak solution of \eqref{eq:Equation}--\eqref{eq:DerivativeBoundary} has an additional regularity, namely
\begin{align*}
&\partial_t \bu \in L^\infty (0, T; H) \cap L^2 (0, T; V) , \\
&\bu \in L^\infty  (0, T; V) .
\end{align*}

Finally, the function $\bv := \partial_t \bu$ satisfies, for a.e. $t \in (0, T)$ and any $\bm \varphi \in V$, the equation
\begin{align*}
\langle \partial_t \bv, \bm \varphi \rangle  +    (\bv, \bm \varphi)_V   = \langle \tilde{\bm F}, \bm \varphi  \rangle ,
\end{align*}
where 
\begin{align*}
\tilde{\bm F} &= (\tilde{\bm f}, \tilde{\bm h}), \\
\tilde{\bm f} &= \partial_t \bm f  - (\bv  \cdot \nabla) \bu - (\bu \cdot \nabla) \bv , \\
\tilde{\bm h} &= \partial_t \bm h + \frac{1}{\beta} (\alpha - \bm s'(\bu)) \bv  .
\end{align*}

\end{theorem}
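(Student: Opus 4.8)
The plan is to carry out the whole argument on the Galerkin system \eqref{eq:Galerkin}, where $\bu^n$ is smooth in time, and only at the end pass to the limit. Since here $\bm S(\bm{Du})=\nu\bm{Du}$ with $\nu=1$, I would differentiate \eqref{eq:Galerkin} in $t$, multiply by $(c_k^n)'(t)$ and sum over $k$; writing $\bv^n:=\partial_t\bu^n$ this gives
\begin{align*}
\tfrac12\ddt\|\bv^n\|_H^2 + \|\bm{D}\bv^n\|_2^2 + \int_{\partial\Omega}\bm s'(\bu^n)\bv^n\cdot\bv^n
= \langle\partial_t\bm F,\bv^n\rangle - \int_\Omega(\bv^n\cdot\nabla)\bu^n\cdot\bv^n - \int_\Omega(\bu^n\cdot\nabla)\bv^n\cdot\bv^n .
\end{align*}
The last term vanishes by \eqref{eq:Div}, \eqref{eq:Impermeability}; the boundary term is nonnegative by \eqref{eq:DerivativeBoundary}; and by Korn's inequality (\autoref{thm:Korn}) the second term on the left, together with the boundary term, controls $c\|\bv^n\|_V^2$. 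For the surviving convective term I would use the two-dimensional interpolation inequality $\|\bv^n\|_4^2\le C\|\bv^n\|_2\|\bv^n\|_{1,2}$, Korn's inequality and Young's inequality to obtain $\bigl|\int_\Omega(\bv^n\cdot\nabla)\bu^n\cdot\bv^n\bigr|\le\tfrac c2\|\bv^n\|_V^2+C\|\bu^n\|_V^2\|\bv^n\|_H^2$, and similarly $|\langle\partial_t\bm F,\bv^n\rangle|\le\tfrac c4\|\bv^n\|_V^2+C\|\partial_t\bm F\|_{V^*}^2$. This yields
\begin{align*}
\ddt\|\bv^n\|_H^2 + c\|\bv^n\|_V^2 \le C\|\bu^n\|_V^2\,\|\bv^n\|_H^2 + C\|\partial_t\bm F\|_{V^*}^2 .
\end{align*}
Since $\int_0^T\|\bu^n\|_V^2$ is bounded uniformly in $n$ (the basic energy estimate, as in the proof of \autoref{thm:NavierStokesExistence}) and $\partial_t\bm F\in L^2(0,T;V^*)$, Grönwall's lemma turns this into uniform bounds on $\bv^n$ in $L^\infty(0,T;H)\cap L^2(0,T;V)$ — provided the initial value $\|\bv^n(0)\|_H$ is controlled.

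Controlling $\bv^n(0)$ is the technical heart of the proof, and it is exactly where the hypotheses $\bu_0\in V\cap W^{2,2}(\Omega)$, $\bm f(0)\in L^2(\Omega)$, $\bm h(0)\in W^{\frac12,2}(\partial\Omega)$ are used. Evaluating \eqref{eq:Galerkin} at $t=0$, testing with $(c_k^n)'(0)$ and summing gives
\begin{align*}
\|\bv^n(0)\|_H^2 = \langle\bm F(0),\bv^n(0)\rangle - \int_\Omega\bm{D}\bu_0^n:\bm{D}\bv^n(0) - \int_{\partial\Omega}\bm s(\bu_0^n)\cdot\bv^n(0) - \int_\Omega(\bu_0^n\cdot\nabla)\bu_0^n\cdot\bv^n(0) .
\end{align*}
I would first note that $V\cap W^{2,2}(\Omega)$ coincides with the domain of the Stokes operator $A$ of the dynamic boundary conditions (defined by $(A\bu,\bm\varphi)_H=(\bu,\bm\varphi)_V$): one inclusion is \autoref{thm:StokesStationarHilbert}, and for the other one integrates $(\bu_0,\bm\varphi)_V$ by parts and uses the Helmholtz decomposition of $-\diver\bm{D}\bu_0\in L^2(\Omega)$ to exhibit $A\bu_0$ as an element of $H$. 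Hence $\bu_0^n=P^n\bu_0$ satisfies $A\bu_0^n=P^nA\bu_0\to A\bu_0$ in $H$, so $\bu_0^n\to\bu_0$ in $W^{2,2}(\Omega)$ by \autoref{thm:StokesStationarHilbert}. Integrating by parts in the elliptic term (legitimate since $\bu_0^n\in W^{2,2}$, the tangential projection being harmless because $\bv^n(0)\cdot\bm n=0$) and using the uniform $W^{2,2}$-bound on $\bu_0^n$ — which makes $\diver\bm{D}\bu_0^n=\tfrac12\Delta\bu_0^n$ and $(\bu_0^n\cdot\nabla)\bu_0^n$ bounded in $L^2(\Omega)$ (the latter via $W^{2,2}(\Omega)\hookrightarrow L^\infty$), the trace of $\bm{D}\bu_0^n$ bounded in $L^2(\partial\Omega)$, and $\bm s(\bu_0^n)$ bounded in $L^2(\partial\Omega)$ by the growth of $\bm s$ — every term on the right is estimated by $C\|\bv^n(0)\|_H$ with $C$ depending only on the data. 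Thus $\|\bv^n(0)\|_H\le C$ uniformly, and the Grönwall estimate closes.

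Passing to the limit (weak-$*$ in $L^\infty(0,T;H)$, weakly in $L^2(0,T;V)$, and identifying the limit with $\partial_t\bu$) gives $\partial_t\bu\in L^\infty(0,T;H)\cap L^2(0,T;V)$. Then $\bu\in W^{1,2}(0,T;V)\hookrightarrow\mathcal C([0,T];V)\subset L^\infty(0,T;V)$, since both $\bu$ and $\partial_t\bu$ lie in $L^2(0,T;V)$. To obtain the equation for $\bv=\partial_t\bu$, I would rewrite the time-differentiated Galerkin identity using $\int_\Omega\bm{D}\bv^n:\bm{D}\bm\omega_k=(\bv^n,\bm\omega_k)_V-\alpha\int_{\partial\Omega}\bv^n\cdot\bm\omega_k$, so that it becomes $\langle\partial_t\bv^n,\bm\omega_k\rangle+(\bv^n,\bm\omega_k)_V=\langle\tilde{\bm F}^n,\bm\omega_k\rangle$ with $\tilde{\bm F}^n=\bigl(\partial_t\bm f-(\bv^n\cdot\nabla)\bu^n-(\bu^n\cdot\nabla)\bv^n,\ \partial_t\bm h+\tfrac1\beta(\alpha-\bm s'(\bu^n))\bv^n\bigr)$. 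The bounds already obtained show $\tilde{\bm F}^n$, and hence $\partial_t\bv^n$, is bounded in $L^2(0,T;V^*)$ — the convective contributions via $|\int_\Omega(\bv^n\cdot\nabla)\bu^n\cdot\bm\varphi|\le C\|\bv^n\|_V\|\bu^n\|_V\|\bm\varphi\|_V$ and likewise for the other one, which lies in $L^2(0,T)$ because $\bv^n\in L^2(0,T;V)$ and $\bu^n\in L^\infty(0,T;V)$. Passing $n\to\infty$ (Aubin–Lions for strong convergence of $\bu^n$, continuity of $\bm s'$ for the boundary nonlinearity, density of $\bigcup_n\operatorname{span}\{\bm\omega_k\}$ in $V$ for the test functions) yields $\langle\partial_t\bv,\bm\varphi\rangle+(\bv,\bm\varphi)_V=\langle\tilde{\bm F},\bm\varphi\rangle$ for all $\bm\varphi\in V$ and a.e.\ $t$, which is exactly the asserted identity. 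I expect the uniform bound on $\|\bv^n(0)\|_H$ to be the main obstacle: it is what forces the regularity assumptions on $\bu_0$, $\bm f(0)$, $\bm h(0)$, and without the identification $V\cap W^{2,2}(\Omega)=D(A)$ — hence the $W^{2,2}$-convergence of the truncations — the convective term $(\bu_0^n\cdot\nabla)\bu_0^n$ could not be paired against $\bv^n(0)$ in $L^2$. A secondary but genuine point is the bookkeeping that every nonlinear term produced by differentiating in time lands in $L^2(0,T;V^*)$; running all manipulations on the Galerkin level, where the spatial sums are finite, keeps these steps rigorous.
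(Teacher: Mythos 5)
Your overall architecture is the same as the paper's: differentiate the Galerkin system in time, test with $(c_k^n)'$, use \eqref{eq:DerivativeBoundary}, Korn and Ladyzhenskaya's inequality \eqref{eq:Interpolation2} to absorb the convective term, close with Gr\"onwall, reduce everything to a uniform bound on $\|\partial_t\bu^n(0)\|_H$ obtained by evaluating the Galerkin identity at $t=0$, and finally pass to the limit; deriving the equation for $\bv$ directly at the Galerkin level (instead of the paper's argument with test functions $\bm\varphi\,\partial_t\psi$ in the weak formulation) is a harmless variation. The genuine gap is in your treatment of the initial datum. You assert that $V\cap W^{2,2}(\Omega)$ \emph{coincides} with $D(A)$, where $(A\bu,\bm\varphi)_H=(\bu,\bm\varphi)_V$, and conclude that $\bu_0^n=P^n\bu_0$ converges to $\bu_0$ in $W^{2,2}(\Omega)$ by \autoref{thm:StokesStationarHilbert}. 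Only the inclusion $V\cap W^{2,2}(\Omega)\subset D(A)$ is correct (and it is the one giving $A\bu_0\in H$). A general $\bu\in D(A)$ solves the stationary problem \eqref{eq:StokesStationary1}--\eqref{eq:StokesStationary4} with boundary datum $\alpha\bu+[(\bm{Du})\bm n]_\tau$ controlled only in $L^2(\partial\Omega)$, which is strictly weaker than the $W^{\frac12,2}(\partial\Omega)$ datum required by \autoref{thm:StokesStationarHilbert}; so $D(A)\supsetneq V\cap W^{2,2}(\Omega)$. More to the point, $A\bu_0^n=P^nA\bu_0\to A\bu_0$ only in $H$, i.e.\ the boundary component of the datum for $\bu_0^n$ is uniformly bounded merely in $L^2(\partial\Omega)$ (a $W^{\frac12,2}$ bound would require $A\bu_0^n$ bounded in $V$, i.e.\ more than $\bu_0\in V\cap W^{2,2}$). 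Hence the uniform $W^{2,2}$ bound on $\bu_0^n$ — which you then use for $W^{2,2}\hookrightarrow L^\infty$, for the $L^2(\partial\Omega)$ bound on the trace of $\bm D\bu_0^n$ after integrating by parts, and for $(\bu_0^n\cdot\nabla)\bu_0^n\in L^2(\Omega)$ — is not justified as written.

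The step can be repaired in two ways. The paper sidesteps the issue by \emph{redefining} the Galerkin initial datum: $\bu_0^n$ is taken as the orthogonal projection of $\bu_0$ in $V\cap W^{2,2}(\Omega)$ onto the linear hull of $\{\bm\omega_k\}_{k=1}^n$, which gives $\|\bu_0^n\|_{2,2}\le\|\bu_0\|_{2,2}$ by construction (and still yields the same weak solution in the limit). Alternatively, you can keep $P^n\bu_0$ and work with the graph-norm bound $\|A\bu_0^n\|_H\le\|A\bu_0\|_H$: by \autoref{thm:StokesStationarWeak} with $p=4$ (exactly as in the proof of \autoref{thm:StokesEvolutionarStrong}) this gives $\|\bu_0^n\|_{1,4}\le C$, hence $\bu_0^n$ bounded in $L^\infty(\Omega)$ in 2D, which already controls $(\bu_0^n\cdot\nabla)\bu_0^n$ in $L^2(\Omega)$ and $\bm s(\bu_0^n)$ in $L^2(\partial\Omega)$; and instead of integrating the elliptic term by parts, write $\int_\Omega\bm D\bu_0^n:\bm D\bv^n(0)=(A\bu_0^n,\bv^n(0))_H-\alpha\int_{\partial\Omega}\bu_0^n\cdot\bv^n(0)\le C\|\bv^n(0)\|_H$. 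With either fix, your bound on $\|\bv^n(0)\|_H$ closes and the remainder of your argument matches the paper's proof.
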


\begin{proof}

We proceed similarly as in \autoref{thm:StokesEvolutionarStrong}(i), i.e. we want to differentiate \eqref{eq:Galerkin} with respect to time. Let us note that it is basically the same procedure as in Theorem III.3.5 in \cite{Temam}.

Since $\bu_0 \in V \cap W^{2, 2}(\Omega)$, we can choose $\bu_0^n$ as the orthogonal projection in $V \cap W^{2, 2}(\Omega)$ of $\bu_0$ onto the space spanned by $\{ \bm \omega_k \}_{k=1}^n$. 
Therefore, 
$\bu_0^n \rightarrow \bu_0$ in $W^{2, 2}(\Omega)$ and $|| \bu_0^n ||_{2, 2} \leq || \bu_0 ||_{2,2}$. 
Next, we multiply \eqref{eq:Galerkin} by $(c_k^n)'(t)$, sum over $k=1, \dots , n$ and set $t = 0$ to obtain that $\partial_t \bu^n (0)$ is bounded in $H$.

Now, we take the time derivative of \eqref{eq:Galerkin} to get
\begin{align*}
(\partial_{tt} \bu^n, \bm \omega_k)_H & + \int\limits_{\Omega} \partial_t \bm{Du}^n  : \bm{D\omega}_k +  \int\limits_{\partial \Omega} \underbrace{ \bm{s}'(\bu^n)  \partial_t \bu^n }_{ \partial_t (\bm s(\bu^n))}   \cdot \bm \omega_k \\
& \quad  = \left< \partial_t \bm F, \bm \omega_k \right> - \int\limits_{\Omega} \left[ (\partial_t \bu^n  \cdot \nabla) \bu^n +  (\bu^n \cdot \nabla ) (\partial_t \bu^n) \right] \cdot \bm \omega_k.
\end{align*}
Further, let us multiply this equation by $(c_k^n)'(t)$ and sum over $k$'s to obtain
\begin{align*}
 & \frac{1}{2} \cdot \frac{ {\rm d}}{{\rm d}t} || \partial_t \bu^n ||_H^2   +  \int\limits_{\Omega} |\partial_t \bm{Du}^n |^2 +  \int\limits_{\partial \Omega}  \bm{s}'(\bu^n)  |\partial_t \bu^n |^2 \\
 & \quad = \left< \partial_t \bm F, \partial_t \bu^n \right> - \int\limits_{\Omega} (\partial_t \bu^n \cdot \nabla ) \bu^n \cdot \partial_t \bu^n +  (\bu^n \cdot \nabla ) (\partial_t \bu^n) \cdot \partial_t \bu^n.
\end{align*}
Thanks to
$$ \text{div}\, \partial_t \bu^n = 0 , \quad \partial_t \bu^n \cdot \bm n = 0 ,$$
we can simplify the equation to achieve
\begin{align*}
\frac{ {\rm d}}{{\rm d}t} || \partial_t \bu^n ||_H^2  & + 2\int\limits_{\Omega} |\partial_t \bm{Du}^n |^2 +  2\int\limits_{\partial \Omega}  \bm{s}'(\bu^n)  |\partial_t \bu^n |^2  \\
& \quad = 2\left< \partial_t \bm F, \partial_t \bu^n \right> - 2\int\limits_{\Omega} (\partial_t \bu^n \cdot \nabla ) \bu^n \cdot \partial_t \bu^n .
\end{align*}
Because of our assumptions, we can estimate the two last terms on the left-hand side in the following way
$$ 2\int\limits_{\Omega} |\partial_t \bm{Du}^n |^2 +  2\int\limits_{\partial \Omega}  \bm{s}'(\bu^n)  |\partial_t \bu^n |^2  \geq 2 c_5  || \partial_t \bu^n ||_V^2 .$$
Concerning the convective term we proceed just as in the standard Dirichlet setting. More specifically, we use Hölder's inequality, 
 interpolation \eqref{eq:Interpolation2} and Young's inequality to estimate
\begin{align*}
\left| \int\limits_{\Omega} (\partial_t \bu^n \cdot \nabla ) \bu^n \cdot \partial_t \bu^n \right|
    &\leq || \partial_t \bu^n ||_4^2 || \nabla \bu^n ||_2 
    \leq c  || \partial_t \bu^n ||_2 || \partial_t \bu^n ||_{1,2} || \bu^n ||_V \\
    &\leq \varepsilon || \partial_t \bu^n ||_V^2 + C || \partial_t \bu^n ||_H^2  || \bu^n ||_V^2 .
\end{align*}
Together, we  get the following inequality
\begin{align*}
\frac{ {\rm d}}{{\rm d}t} || \partial_t \bu^n ||_H^2  + c || \partial_t \bu^n ||_V^2  \leq C ||  \partial_t \bm F ||_{V^*}^2 + C || \bu^n ||_V^2 || \partial_t \bu^n ||_H^2 .
\end{align*}
Finally, we integrate over $(0, t)$ to obtain
\begin{align*}
|| \partial_t \bu^n (t)||_H^2  & + c_5   \int_0^t || \partial_t \bu^n ||_V^2  \\
& \leq || \partial_t \bu^n (0)||_H^2  +c \int_0^t  ||  \partial_t \bm F ||_{V^*}^2 + c \int_0^t  || \bu^n ||_{V}^2 || \partial_t \bu^n ||_H^2 
\end{align*}
and apply Grönwall's inequality to get
\begin{align*}
|| \partial_t \bu^n (t)||_H^2     \leq \left[ || \partial_t \bu^n (0)||_H^2  +c \int_0^t  ||  \partial_t \bm F ||_{V^*}^2 \right] \exp \left( c \int_0^t  || \bu^n ||_{V}^2 \right) . 
\end{align*}
As we already pointed out, everything on the right-hand side is bounded, and so the control of $\partial_t \bu $ in $L^\infty (0, T; H) \cap L^2 (0, T; V)$ follows. Because both $\bu$ and $\partial_t \bu$ belong into $L^2 (0, T; V)$ we obtain that $\bu \in L^\infty (0, T; V)$, which completes the first part of the proof.

To show the rest of the theorem we consider an arbitrary $ \psi \in \mathcal{C}^\infty_0 (0, T)$, multiply the weak formulation \eqref{eq:WF} by its derivative, and integrate over the whole time interval to achieve
\begin{align*}
\int\limits_0^T \left< \partial_t \bu, \bm  \varphi \right> \partial_t \psi & +  \int\limits_0^T \left( \int\limits_{\Omega} \bm {Du} : \bm {D \varphi}  + \int\limits_{\partial \Omega} \bm s (\bu) \cdot \bm  \varphi \right) \partial_t \psi \\
& = \int\limits_0^T \left< \bm F, \bm  \varphi \right> \partial_t \psi - \int\limits_0^T  \int\limits_{\Omega} (\bu \cdot \nabla  ) \bu \cdot \bm  \varphi \, \partial_t \psi  .
\end{align*}
Observe that $\partial_{tt} \bu \in L^2 (0, T; V^*) $, as follows from multiplicating the differentiated equation by $(c_k^n)''$. It means that we can use integration per partes in the first integral, in the other ones it is for free. Because $\bm \varphi$ does not depend on time and $\psi$ is compactly supported we get 
\begin{align*}
\int\limits_0^T \left< \partial_t \bv, \bm \varphi \right> \psi & + \int\limits_0^T \left( \int\limits_{\Omega} \bm {Dv} : \bm {D \varphi}  + \int\limits_{\partial \Omega} \bm s' (\bu) \bv \cdot \bm  \varphi \right)  \psi \\
& \qquad  = \int\limits_0^T \left< \partial_t \bm F, \bm \varphi \right> \psi - \int\limits_0^T  \int\limits_{\Omega} \left[ (\bv \cdot \nabla ) \bu + ( \bu \cdot \nabla)  \bv \right] \cdot \bm \varphi \, \psi  .
\end{align*}
This identity is satisfied for any smooth function, i.e. for a.e. $t \in (0, T)$ there holds
\begin{align*}
 \left< \partial_t \bv, \bm \varphi \right>  +    (\bv, \bm \varphi)_V   = \left< \left(\partial_t \bm f  - (\bv  \cdot \nabla) \bu - (\bu \cdot \nabla) \bv , \partial_t \bm h + \frac{1}{\beta} (\alpha - \bm s'(\bu)) \bv \right), \bm \varphi  \right> .
\end{align*}
\qed \end{proof}

\begin{remark}
Let us remark that we also can assume just $\bu_0 \in H$. The proof then works in the same way and we would obtain the same regularity as before, but locally in time.
\end{remark}

We will now state and prove the analogue to the last part of \autoref{thm:StokesEvolutionarStrong}. Nevertheless, we will not need it. The reason is that we can get a slightly better regularity with weaker assumptions on $\bm F$ using the stationary Stokes results.

\begin{lemma}
Let all the assumptions of the previous theorem hold and suppose that
$$ \bm F \in L^2(0, T; H) .  $$
Then the weak solution also satisfies
\begin{align*}
\bu & \in L^2(0, T; W^{1,4} (\Omega)).
\end{align*}
\end{lemma}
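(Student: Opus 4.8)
The plan is to bootstrap from the time-derivative regularity just obtained in \autoref{thm:NavierStokesRegular} via the stationary Stokes theory, exactly in the spirit of the final part of the proof of \autoref{thm:StokesEvolutionarStrong}(ii). Indeed, under the present hypotheses we already know $\partial_t\bu\in L^2(0,T;V)$ and $\bu\in L^\infty(0,T;V)$, hence by Korn $\bu\in L^\infty(0,T;W^{1,2}(\Omega))$. Write the interior equation as a stationary Stokes system $-\diver\bm{Du}+\nabla\pi=\bm g$ with right-hand side $\bm g:=\bm f-\partial_t\bu-(\bu\cdot\nabla)\bu$, and on the boundary $\alpha\bu+[(\bm{Du})\bm n]_\tau=\bm k$ with $\bm k:=\beta\bm h-\beta\partial_t\bu-\bm s(\bu)+\alpha\bu$. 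To apply \autoref{thm:StokesStationarWeak} with $p=4$ (so $t(4)=4/3$) I must check that $\bm g\in L^2(0,T;L^{4/3}(\Omega))$ and $\bm k\in L^2(0,T;W^{-1/4,4}(\Bdry))$ for a.e.\ $t$, after which that theorem yields $\bu\in L^2(0,T;W^{1,4}(\Omega))$.

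The verification of the interior term is where the only real work lies. Since $\bm F\in L^2(0,T;H)$ we have $\bm f\in L^2(0,T;L^2(\Omega))\hookrightarrow L^2(0,T;L^{4/3}(\Omega))$, and $\partial_t\bu\in L^2(0,T;V)\hookrightarrow L^2(0,T;L^2(\Omega))\hookrightarrow L^2(0,T;L^{4/3}(\Omega))$ as well. The convective term is handled by the 2D Ladyzhenskaya/Gagliardo--Nirenberg inequality: $\|(\bu\cdot\nabla)\bu\|_{4/3}\le\|\bu\|_{4}\|\nabla\bu\|_{2}\le c\|\bu\|_{2}^{1/2}\|\bu\|_{1,2}^{1/2}\|\nabla\bu\|_{2}$, which is bounded in time because $\bu\in L^\infty(0,T;W^{1,2}(\Omega))$; alternatively one may just use $\bu\in L^\infty(0,T;W^{1,2})\hookrightarrow L^\infty(0,T;L^q)$ for all $q<\infty$ to put $(\bu\cdot\nabla)\bu\in L^\infty(0,T;L^{2-})$, which embeds into $L^{4/3}$. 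For the boundary datum, $\bm h\in L^2(0,T;H)$ means $\bm h\in L^2(0,T;L^2(\Bdry))\hookrightarrow L^2(0,T;W^{-1/4,4}(\Bdry))$ in the 2D trace scale; $\partial_t\bu\in L^2(0,T;V)$ gives $\partial_t\bu\in L^2(0,T;L^2(\Bdry))$, again embedding into $W^{-1/4,4}(\Bdry)$; and since $\bm s$ has linear growth (from \eqref{eq:GrowthBoundary}, or from the linear hypotheses), $\bm s(\bu)-\alpha\bu$ is controlled in $L^\infty(0,T;L^2(\Bdry))$ by the trace of $\bu\in L^\infty(0,T;W^{1,2})$. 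Collecting these, $\bm g$ and $\bm k$ have the required integrability, so \autoref{thm:StokesStationarWeak} applies for a.e.\ $t$ and, by measurability and the explicit estimate there, gives the norm bound $\|\bu(t)\|_{1,4}\le C(\|\bm g(t)\|_{4/3}+\|\bm k(t)\|_{-1/4,4})$, which is square-integrable in time.

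I expect the main obstacle to be purely bookkeeping: making sure every embedding used is valid in the two-dimensional boundary setting (in particular $L^2(\Bdry)\hookrightarrow W^{-1/4,4}(\Bdry)$ and $W^{1,2}(\Omega)\hookrightarrow L^q(\Omega)$ for the relevant $q$), and confirming that the time-integrability exponents line up so that the final right-hand side sits in $L^2(0,T)$ rather than merely $L^2_{\mathrm{loc}}$. Since the hypotheses of \autoref{thm:NavierStokesRegular} include $\bu_0\in V\cap W^{2,2}(\Omega)$, the regularity of $\partial_t\bu$ and of $\bu$ is global in time, so no localization is needed and the conclusion $\bu\in L^2(0,T;W^{1,4}(\Omega))$ holds on the full interval. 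The convective term estimate is the only genuinely nonlinear ingredient, and it is standard in 2D; everything else is a direct citation of the stationary theory combined with the a priori bounds already in hand.
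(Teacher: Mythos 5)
Your proposal is correct, but it is not the argument the paper gives for this lemma: it is, in effect, the paper's subsequent \autoref{thm:NavierStokesFirstStep} specialized to $p=2$, $q=4$. The paper proves the present lemma at the Galerkin level: it tests \eqref{eq:Galerkin} with $(c_k^n)'$ and with $\mu_k c_k^n$, adds the two identities, uses the eigenvalue relation $(L^n,L^n)_H=(\bu^n,L^n)_V$ for $L^n=\sum_k\mu_k c_k^n\bm \omega_k$ together with the bound $\|\bu^n\|_{1,4}^2\le C\|L^n\|_H^2$ (itself obtained from \autoref{thm:StokesStationarWeak} applied to the eigenfunctions), and absorbs the convective term by interpolating $\|\nabla\bu^n\|_p$ between $\|\nabla\bu^n\|_2$ and $\|\nabla\bu^n\|_4$; this parallels \autoref{thm:StokesEvolutionarStrong}(ii). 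You instead work with the limit solution directly, treating $t$ as a parameter: you move $\partial_t\bu$, the convective term and $\bm s(\bu)-\alpha\bu$ to the right-hand side and apply the stationary $W^{1,4}$ regularity of \autoref{thm:StokesStationarWeak} pathwise, using $\partial_t\bu\in L^\infty(0,T;H)\cap L^2(0,T;V)$ and $\bu\in L^\infty(0,T;V)$ from \autoref{thm:NavierStokesRegular}. Your route is shorter and actually needs less than $\bm F\in L^2(0,T;H)$ (only $\bm f\in L^2(0,T;L^{4/3}(\Omega))$ and $\bm h\in L^2(0,T;W^{-1/4,4}(\Bdry))$ enter), which is exactly the point of the remark following the lemma and of \autoref{thm:NavierStokesFirstStep}; the paper's Galerkin computation is kept as the nonlinear analogue of \autoref{thm:StokesEvolutionarStrong}(ii), self-contained at the level of the approximations, at the price of the convective-term absorption and the stronger hypothesis on $\bm F$.

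One small inaccuracy in your write-up: \eqref{eq:GrowthBoundary} gives $\bm s$ growth of order $s-1$ with $s\ge 2$, not linear growth. This is harmless here, since $\trace \bu \in L^\infty(0,T;L^q(\Bdry))$ for every $q<\infty$, so $\bm s(\bu)\in L^\infty(0,T;L^2(\Bdry))$ still holds and your boundary estimate goes through unchanged.
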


\begin{proof} We multiply \eqref{eq:Galerkin} by $(c_k^n)'(t)$ and sum over $k = 1, \dots, n$ to achieve
\begin{align}
|| \partial_t \bu^n ||_H^2 + \frac{1}{2} \cdot \frac{{\rm d}}{{\rm d}t} \int\limits_{\Omega} |\bm{Du}^n |^2 +  \int\limits_{\partial \Omega} \bm s^n \cdot \partial_t \bu^n  = \left< \bm F, \partial_t \bu^n \right> - \int\limits_{\Omega} (\bu^n \cdot \nabla) \bu^n  \cdot  \partial_t \bu^n . \label{eq:Galerkin1}
\end{align}
Simultaneously, we multiply\eqref{eq:Galerkin} by $\mu_k c_k^n(t)$ and sum over $k$'s again
\begin{align}
\frac{1}{2} \cdot \frac{{\rm d}}{{\rm d}t} || \bu^n ||_{V}^2 +  \int\limits_{\Omega} \bm{Du}^n : \bm D L^n +  \int\limits_{\partial \Omega} \bm s^n \cdot L^n  = \left< \bm F, L^n\right> - \int\limits_{\Omega} (\bu^n \cdot \nabla ) \bu^n  \cdot  L^n , \label{eq:Galerkin2}
\end{align}
where $L^n = \sum_{k=1}^n \mu_k c_k^n(t) \bm \omega_k $ as in \autoref{thm:StokesEvolutionarStrong}. By adding \eqref{eq:Galerkin1} and \eqref{eq:Galerkin2} we obtain 
\begin{align*}
& || \partial_t \bu^n ||_H^2  + \frac{1}{2} \cdot \frac{{\rm d}}{{\rm d}t} \left( || \bu^n ||_V^2  + \int\limits_{\Omega} |\bm{Du}^n |^2 \right) +   \int\limits_{\Omega} \bm{Du}^n : \bm D L^n  \\
&   = \left< \bm F, \partial_t \bu^n + L^n \right> - \int\limits_{\Omega} ( \bu^n \cdot \nabla) \bu^n  \cdot  ( \partial_t \bu^n + L^n) - \left(  \int\limits_{\partial \Omega} \bm s^n \cdot L^n +  \int\limits_{\partial \Omega} \bm s^n \cdot \partial_t \bu^n  \right) .
\end{align*}
As before, we know that
\begin{align*}
(L^n, L^n)_H  = || L^n ||_H^2 = (\bu^n, L^n)_{V} = \int\limits_{\Omega} \bm{Du}^n : \bm D L^n + \alpha \int\limits_{\partial \Omega} \bu^n \cdot L^n,
\end{align*}
and therefore
\begin{align*}
\int\limits_{\Omega} \bm{Du}^n : \bm D L^n = || L^n ||_H^2 - \alpha \int\limits_{\partial \Omega} \bu^n \cdot L^n.
\end{align*}
We rewrite the identity above in the following form
\begin{align*}
& ||  \partial_t \bu^n ||_H^2  + \frac{1}{2} \cdot \frac{{\rm d}}{{\rm d}t} \left( || \bu^n ||_{V}^2  + \int\limits_{\Omega} |\bm{Du}^n |^2 \right) +  || L^n ||_H^2  \\
&   = \left< \bm F, \partial_t \bu^n + L^n \right> - \int\limits_{\Omega} ( \bu^n \cdot \nabla) \bu^n  \cdot  ( \partial_t \bu^n + L^n)  \\
& \quad - \left(  \int\limits_{\partial \Omega} \bm s^n \cdot L^n +  \int\limits_{\partial \Omega} \bm s^n \cdot \partial_t \bu^n  \right) + \alpha \int\limits_{\partial \Omega} \bu^n \cdot L^n .
\end{align*}

Next, we integrate this equation over $(0, t)$, and thanks to Hölder's and Young's inequalities we get
\begin{align}
\begin{split}
\int\limits_0^t  || \partial_t \bu^n ||_H^2  & +  \left( || \bu^n ||_V^2  + \int\limits_{\Omega} |\bm{Du}^n |^2 \right)(t) +  \int\limits_0^t  || L^n ||_H^2  \label{eq:GalerkinNerovnost} \\
&   \leq \left( || \bu^n ||_V^2  + \int\limits_{\Omega} |\bm{Du}^n |^2 \right)(0)+  c \int \limits_0^t \int\limits_{\Omega} |\bu^n|^2 |\nabla \bu^n|^2  \\
& \quad + c \int \limits_0^t \left[  || \bm F ||_H^2  + \int\limits_{\partial \Omega} |\bu^n|^2 + \int\limits_{\partial \Omega} |\bm s^n|^2  \right].    
\end{split}
\end{align}
As we already saw in the proof of \autoref{thm:StokesEvolutionarStrong}, there holds
$$ || \bu^n ||_{1,4}^2 \leq C || L^n ||_H^2.$$
It gives us a way to deal with the convective term. Recall that because of \autoref{thm:NavierStokesRegular} we have $\{ \bu^n \}_n$ uniformly in $L^\infty (0, T; V)$ and we know that $W^{1, 2}(\Omega) \hookrightarrow L^q (\Omega)$ for any $q > 2$. Let us now consider any $\alpha \in (0, 1)$ and choose $\frac{1}{q} = \frac{1- \alpha}{4}$. For $\frac{1}{q} + \frac{1}{p} = \frac{1}{2}$ we get $p \in (2, 4)$, and therefore
\begin{align*}
 \int \limits_0^t \int\limits_{\Omega} |\bu^n|^2 |\nabla \bu^n|^2 
 & \leq   \int\limits_0^t  ||\bu^n||_q^2 ||\nabla \bu^n||_p^2 
  \leq C \int\limits_0^t  ||\nabla \bu^n||_p^2  \\
 & \leq C \int\limits_0^t  ||\nabla \bu^n||_2^{2\alpha} ||\nabla \bu^n||_4^{2(1-\alpha)}
  \leq C \int\limits_0^t  ||\nabla \bu^n||_4^{2(1-\alpha)} \\
  & \leq C \int\limits_0^t 1 +  \varepsilon \int\limits_0^t  ||\nabla \bu^n||_4^2 
  \leq CT +  \varepsilon \int\limits_0^t   || L^n ||_H^2 .
\end{align*}
We used Hölder's inequality and the uniform estimate for $\bu^n$, then the classical interpolation and the uniform estimate for $\nabla \bu^n$, lastly, Young's inequality (because $2(1-\alpha) < 2$) together with the estimate $ || \bu^n ||_{1,4}^2 \leq C || L^n ||_H^2$. Thus, from \eqref{eq:GalerkinNerovnost}, we finally obtain
\begin{align*}
\int\limits_0^t  || \partial_t \bu^n ||_H^2  &+   || \bu^n (t)||_{V}^2  +  \int\limits_0^t  || \bu^n ||_{1,4}^2  \\
& \leq  || \bu^n (0)||_{V}^2 +  c \int \limits_0^t \left[ 1+ || \bm F ||_H^2  + \int\limits_{\partial \Omega} |\bu^n|^2 + \int\limits_{\partial \Omega} |\bm s^n|^2  \right]  .    
\end{align*}
Thanks to the boundedness of the right-hand side we have the desired uniform control of $\bu^n$ in $L^2 (0, T; W^{1,4} (\Omega))$, which completes the proof.

\qed \end{proof}

\begin{remark}
In contrast with the Stokes problem we really need information about the time derivatives of our data (to control the convective term). Therefore, the previous lemma is not useful. As we will see, we are able to achieve $\bu \in L^2(0, T; W^{1,4} (\Omega))$ by use of the previous stationary theory with even weaker assumptions.
\end{remark}

Here, we will replicate \autoref{thm:StokesEvolutionarFirstStep} for our non-linear setting.

\begin{lemma}\label{thm:NavierStokesFirstStep}
Let all the assumptions of \autoref{thm:NavierStokesRegular} hold. Let us further assume that $\Omega \in \mathcal{C}^{1, 1}$ and for some $1< p \leq + \infty$ and $q \in (1, 4]$ there hold
\begin{align*}
\bm f & \in L^p(0, T; L^{t(q)}(\Omega) ), \, \bm h \in L^p(0, T; W^{-\frac{1}{q}, q}(\partial \Omega) ).
\end{align*}
Then the unique weak solution of \eqref{eq:Equation}--\eqref{eq:DerivativeBoundary} satisfies
\begin{align*}
\bu & \in L^p (0, T; W^{1, q} (\Omega)). 
\end{align*}
If the previous holds with $p = 2$ and, moreover,
\begin{align*}
\bm f & \in L^2(0, T; L^2(\Omega) ) , \, \bm h \in L^2(0, T; W^{\frac{1}{2}, 2}(\partial \Omega) ),
\end{align*}
then there also holds
\begin{align*}
\bu & \in L^2 (0, T; W^{2, 2} (\Omega)). 
\end{align*}
\end{lemma}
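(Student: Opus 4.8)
The plan is to freeze the time variable and read the weak identity \eqref{eq:WF} as a \emph{stationary} Stokes problem of the form \eqref{eq:StokesStationary1}--\eqref{eq:StokesStationary4}, with the time derivative $\partial_t\bu$, the convective term $(\bu\cdot\nabla)\bu$ and the boundary nonlinearity $\bm s(\bu)$ absorbed into the data; the two conclusions then follow by applying the stationary estimates of \autoref{thm:StokesStationarWeak} and \autoref{thm:StokesStationarHilbert} pointwise in $t$ and integrating in time. Concretely, the hypotheses of \autoref{thm:NavierStokesRegular} provide $\partial_t\bu\in L^\infty(0,T;H)\cap L^2(0,T;V)$ and $\bu\in L^\infty(0,T;V)$, so in \eqref{eq:WF} the pairings $\langle\partial_t\bu,\bm\varphi\rangle$ and $\langle\bm F,\bm\varphi\rangle$ split into an interior and a boundary integral; adding $\alpha\int_{\Bdry}\bu\cdot\bm\varphi$ to both sides (any fixed $\alpha>0$, e.g.\ the slip parameter) one arrives, for a.e.\ $t$ and every $\bm\varphi\in V$, at
\begin{align*}
\int_\Omega\bm{Du}:\bm{D\varphi}+\alpha\int_{\Bdry}\bu\cdot\bm\varphi
&=\int_\Omega\bm f_\ast\cdot\bm\varphi+\int_{\Bdry}\bm h_\ast\cdot\bm\varphi , \\
\bm f_\ast&:=\bm f-\partial_t\bu-(\bu\cdot\nabla)\bu , \qquad
\bm h_\ast:=\beta\bm h-\beta\partial_t\bu+\alpha\bu-\bm s(\bu) ,
\end{align*}
which is exactly the weak form of \eqref{eq:StokesStationary1}--\eqref{eq:StokesStationary4} with data $(\bm f_\ast,\bm h_\ast)$ (only the tangential part of $\bm h_\ast$ is relevant, since $\bm\varphi\cdot\bm n=0$). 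Since that stationary problem is uniquely solvable, the frozen profile $\bu(t)$ coincides with the solution provided by the stationary theorems, so everything reduces to placing $(\bm f_\ast,\bm h_\ast)$ in the right spaces and using linearity of the stationary bound.

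For the first assertion I would check $\bm f_\ast\in L^p(0,T;L^{t(q)}(\Omega))$ and $\bm h_\ast\in L^p(0,T;W^{-1/q,q}(\Bdry))$. The data $\bm f$, $\bm h$ lie there by hypothesis. Since $t(q)\le2$, one has $\partial_t\bu\in L^\infty(0,T;L^2(\Omega))\hookrightarrow L^\infty(0,T;L^{t(q)}(\Omega))$, while its trace lies in $L^\infty(0,T;L^2(\Bdry))$, and $L^2(\Bdry)\hookrightarrow W^{-1/q,q}(\Bdry)$ holds precisely because $\Bdry$ is one-dimensional and $q\le4$ — this is where the restriction on $q$ enters. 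In two space dimensions, $\bu\in L^\infty(0,T;W^{1,2}(\Omega))$ gives $\bu\in L^\infty(0,T;L^r(\Omega))$ for all $r<\infty$ with $\nabla\bu\in L^\infty(0,T;L^2(\Omega))$, so $(\bu\cdot\nabla)\bu\in L^\infty(0,T;L^m(\Omega))$ for every $m<2$, in particular $m=t(q)$. Finally, $\bm s$ being Lipschitz with $\bm s(\bm{0})=\bm{0}$ gives $|\bm s(\bu)|\le C|\bu|$, hence $\alpha\bu-\bm s(\bu)\in L^\infty(0,T;L^2(\Bdry))\hookrightarrow L^\infty(0,T;W^{-1/q,q}(\Bdry))$. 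Applying \autoref{thm:StokesStationarWeak} at a.e.\ $t$ and taking the $L^p$-norm in time (or $\sup_t$ when $p=+\infty$) yields $\bu\in L^p(0,T;W^{1,q}(\Omega))$.

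For the last assertion the key point is that $\bu\in L^\infty(0,T;W^{1,2}(\Omega))$ alone does \emph{not} put $(\bu\cdot\nabla)\bu$ into $L^2(0,T;L^2(\Omega))$, so I would first run the argument above with $p=2$ and $q=4$ — legitimate because the extra hypotheses $\bm f\in L^2(0,T;L^2)$, $\bm h\in L^2(0,T;W^{1/2,2}(\Bdry))$ imply $\bm f\in L^2(0,T;L^{4/3}(\Omega))$ and $\bm h\in L^2(0,T;W^{-1/4,4}(\Bdry))$ — obtaining $\bu\in L^2(0,T;W^{1,4}(\Omega))$. Then $\|(\bu\cdot\nabla)\bu\|_{L^2(\Omega)}\le\|\bu\|_{L^4(\Omega)}\|\nabla\bu\|_{L^4(\Omega)}$ with $\|\bu\|_{L^4}$ bounded uniformly in $t$ and $\|\nabla\bu\|_{L^4}\in L^2(0,T)$, so $(\bu\cdot\nabla)\bu\in L^2(0,T;L^2(\Omega))$ and $\bm f_\ast\in L^2(0,T;L^2(\Omega))$. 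On the boundary, $\partial_t\bu\in L^2(0,T;V)$ gives $\partial_t\bu|_{\Bdry}\in L^2(0,T;W^{1/2,2}(\Bdry))$, and the pointwise Gagliardo-seminorm inequality $[\bm s(\bu)]_{1/2,2}\le\mathrm{Lip}(\bm s)\,[\bu]_{1/2,2}$ together with $\bu\in L^\infty(0,T;V)$ gives $\alpha\bu-\bm s(\bu)\in L^\infty(0,T;W^{1/2,2}(\Bdry))$; hence $\bm h_\ast\in L^2(0,T;W^{1/2,2}(\Bdry))$. Now \autoref{thm:StokesStationarHilbert}, used pointwise in $t$ and integrated, gives $\bu\in L^2(0,T;W^{2,2}(\Omega))$. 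The main obstacle in the whole scheme is exactly this nonlinear bookkeeping: the two-step bootstrap through $W^{1,4}$ (without which the convective term fails to be $L^2$ in space) and the fact that post-composition with the Lipschitz map $\bm s$ does not destroy the $W^{1/2,2}$-regularity of the trace of $\bu$; everything else reduces to crude two-dimensional Sobolev embeddings.
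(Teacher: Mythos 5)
Your proposal is correct and follows essentially the same route as the paper: both treat $\partial_t\bu$, $(\bu\cdot\nabla)\bu$ and $\bm s(\bu)$ as data for the stationary Stokes problem and apply \autoref{thm:StokesStationarWeak} (resp.\ \autoref{thm:StokesStationarHilbert}) at a.e.\ time, using $\partial_t\bu\in L^\infty(0,T;H)\cap L^2(0,T;V)$ and $\bu\in L^\infty(0,T;V)$ from \autoref{thm:NavierStokesRegular} together with the embeddings forced by $q\le 4$. The only cosmetic difference is in the $W^{2,2}$ step, where you bound the convective term by $\|\bu\|_{L^4}\|\nabla\bu\|_{L^4}$ after first deriving $\bu\in L^2(0,T;W^{1,4}(\Omega))$, while the paper uses $\bu\in L^p(0,T;W^{1,q}(\Omega))\hookrightarrow L^p(0,T;L^\infty(\Omega))$ and the split $\|\bu\|_{L^\infty}\|\nabla\bu\|_{L^2}$ — the same bootstrap in substance.
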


\begin{proof}
We wish to apply \autoref{thm:StokesStationarWeak}, i.e. we need to check that
\begin{align*}
\bm f - \partial_t \bu - (\bu \cdot \nabla) \bu & \in L^p(0, T; L^{t(q)}(\Omega) ), \\
\beta \bm h - \beta \partial_t \bu + \alpha \bu - \bm s(\bu)  & \in L^p(0, T; W^{-\frac{1}{q}, q}(\partial \Omega)). 
\end{align*}
For $\bm f$ and $\bm h$ it holds due to our assumptions and inclusions for $\partial_t \bu$ can be verified in the same way as in \autoref{thm:StokesEvolutionarFirstStep}. Just to recall, it follows from the fact that $\partial_t \bu \in L^\infty (0, T; H)$, which is true because of \autoref{thm:NavierStokesRegular}. Next, because $\bm s$ is Lipschitz and $\bu \in  L^\infty (0, T; W^{1, 2}(\Omega))$, we even have that $\alpha \bu - \bm s (\bu) \in L^\infty (0, T; W^{\frac{1}{2}, 2}(\partial \Omega))$. Finally, because  $\bu \in  L^\infty (0, T; L^q(\Omega))$ for any $q < + \infty$ and $\nabla \bu \in L^\infty (0, T; L^2(\Omega))$ we also get $(\bu \cdot \nabla) \bu \in L^\infty(0, T; L^{t(q)}(\Omega) )$, which finishes the first part of the proof.

To show the special case with $p = 2$ we want to use \autoref{thm:StokesStationarHilbert}, which means to verify
\begin{align*}
\bm f - \partial_t \bu - (\bu \cdot \nabla ) \bu & \in L^2(0, T; L^2(\Omega) ), \\
\beta \bm h  -  \beta \partial_t \bu + \alpha \bu - \bm s(\bu) & \in L^2(0, T; W^{\frac{1}{2}, 2}(\partial \Omega)). 
\end{align*} 
Up to the convective term is all clear, because of $\bu$, $\partial_t \bu \in L^2(0, T; W^{1, 2}(\Omega))$. To show that $(\bu \cdot \nabla ) \bu \in L^2(0, T; L^2(\Omega) )$ we recall that at this point we have $\bu \in L^p (0, T; W^{1, q} (\Omega)) \hookrightarrow L^p (0, T; L^\infty(\Omega))$ and $\nabla \bu \in L^\infty(0, T; L^2(\Omega))$, from which the conclusion follows by Hölder's inequality.
\qed \end{proof}

In correspondence with the previous section, we now develop $L^p-L^q$ regularity for finite $p$ and then also maximal time regularity, i.e. for $p = + \infty$.

\begin{theorem}[$L^p-L^q$ regularity of NS]\label{thm:NavierStokesp-q}
Let all the assumptions of \autoref{thm:NavierStokesRegular} hold. Let us further assume that $\bm s'$ is bounded and for some $2 < \sigma < 4 $ there holds 
\begin{align*}
\bm f & \in L^\infty(0, T; L^{t(\sigma)}(\Omega) ), \, \bm h \in L^\infty(0, T; W^{-\frac{1}{\sigma}, \sigma}(\partial \Omega) ).
\end{align*}
Let $ 2 < p < +\infty$ and
\begin{align*}
&\partial_t \bm F \in L^2 (0, T; H), \\
&\bm f \in L^p (0, T; L^p(\Omega)), \, \bm h \in L^p (0, T; W^{1-\frac{1}{p}, p}(\partial \Omega)), 
\end{align*}
Then the unique weak solution of \eqref{eq:Equation}--\eqref{eq:DerivativeBoundary} satisfies, for some $ q  > 2$, that
\begin{align*}
\bu & \in L^\infty_{\text{loc}} (0, T; W^{1, q}(\Omega)) , \\
\bu & \in L^p_{\text{loc}} (0, T; W^{2,q}(\Omega) ) , \\
\pi & \in L^p_{\text{loc}} (0, T; W^{1,q}(\Omega).
\end{align*}
\end{theorem}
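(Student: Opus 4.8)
The plan is to follow the scheme of \autoref{thm:StokesEvolutionarRegular}: for a.e.\ fixed time, move $\partial_t\bu$, the convective term and the boundary nonlinearity to the right-hand side and apply the stationary $W^{2,q}$ theory of \autoref{thm:StokesStationarStrong}. The starting point is \autoref{thm:NavierStokesFirstStep}: applying it with the exponent $\sigma$ (in the notation of that lemma, $p=\infty$, $q=\sigma$) gives, under the present hypotheses, the base regularity
\[
	\bu \in L^\infty(0,T;W^{1,\sigma}(\Omega)) \hookrightarrow L^\infty(0,T;L^\infty(\Omega)),
	\qquad \nabla\bu \in L^\infty(0,T;L^\sigma(\Omega)),
\]
using $W^{1,\sigma}(\Omega)\hookrightarrow L^\infty(\Omega)$ in two dimensions for $\sigma>2$. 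This is the regularity that will feed all the nonlinear estimates below.

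The crucial step is to upgrade the boundary trace of $\partial_t\bu$, exactly as in the Stokes case, since \autoref{thm:StokesStationarStrong} needs the boundary datum in $W^{1-1/q,q}(\Bdry)$ while \autoref{thm:NavierStokesRegular} only provides $\partial_t\bu \in L^2(0,T;W^{1/2,2}(\Bdry))$. Here we use that, by \autoref{thm:NavierStokesRegular}, the function $\bv=\partial_t\bu$ solves the \emph{linear} evolutionary Stokes system \eqref{eq:StokesEvolutionary1}--\eqref{eq:StokesEvolutionary4} with force $\tilde{\bm F}=(\tilde{\bm f},\tilde{\bm h})$, $\tilde{\bm f}=\partial_t\bm f-(\bv\cdot\nabla)\bu-(\bu\cdot\nabla)\bv$, $\tilde{\bm h}=\partial_t\bm h+\frac1\beta(\alpha-\bm s'(\bu))\bv$. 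One checks $\tilde{\bm F}\in L^2(0,T;H)$: indeed $\partial_t\bm F\in L^2(0,T;H)$ by hypothesis; $(\alpha-\bm s'(\bu))\bv\in L^2(0,T;L^2(\Bdry))$ because $\bm s'$ is bounded and $\bv\in L^2(0,T;V)$; and for the bilinear terms one combines $\bv\in L^\infty(0,T;L^2(\Omega))\cap L^2(0,T;W^{1,2}(\Omega))$, hence $\bv\in L^2(0,T;L^r(\Omega))$ for every finite $r$ by the two-dimensional parabolic interpolation, with $\nabla\bu\in L^\infty(0,T;L^\sigma(\Omega))$ and $\bu\in L^\infty(0,T;L^\infty(\Omega))$ to get $(\bv\cdot\nabla)\bu,(\bu\cdot\nabla)\bv\in L^2(0,T;L^2(\Omega))$. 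Then \autoref{thm:StokesEvolutionarStrong}(ii) applied to $\bv$ (note $\bv(0)=\partial_t\bu(0)\in H$, which is why the result is only local in time) yields $\partial_t\bu\in L^\infty_{\text{loc}}(0,T;W^{1,2}(\Omega))\cap L^2_{\text{loc}}(0,T;W^{1,4}(\Omega))$, which interpolates to $\partial_t\bu\in L^p_{\text{loc}}(0,T;W^{1,q}(\Omega))$ for some $q>2$; decreasing $q$ if needed we may assume $q\le\min\{\sigma,p\}$.

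To conclude, note that both $\bu$ (already in $L^\infty(0,T;W^{1,\sigma}(\Omega))\subset L^p_{\text{loc}}(0,T;W^{1,q}(\Omega))$) and $\partial_t\bu$ lie in $L^p_{\text{loc}}(0,T;W^{1,q}(\Omega))$, hence $\bu\in L^\infty_{\text{loc}}(0,T;W^{1,q}(\Omega))$. Rewriting \eqref{eq:Equation}, \eqref{eq:Dynamic} for a.e.\ fixed $t$ as the stationary Stokes system \eqref{eq:StokesStationary1}--\eqref{eq:StokesStationary4} with interior datum $\bm f-\partial_t\bu-(\bu\cdot\nabla)\bu$ and boundary datum $\beta\bm h-\beta\partial_t\bu+\alpha\bu-\bm s(\bu)$, one sees the interior datum is in $L^p_{\text{loc}}(0,T;L^q(\Omega))$ ($\bm f\in L^p(0,T;L^p(\Omega))\hookrightarrow L^p(0,T;L^q(\Omega))$, $\partial_t\bu\in L^p_{\text{loc}}(0,T;L^q(\Omega))$, and $(\bu\cdot\nabla)\bu\in L^\infty_{\text{loc}}(0,T;L^q(\Omega))$ since $W^{1,q}(\Omega)\hookrightarrow L^\infty(\Omega)$), while the boundary datum is in $L^p_{\text{loc}}(0,T;W^{1-1/q,q}(\Bdry))$ ($\bm h\in L^p(0,T;W^{1-1/p,p}(\Bdry))\hookrightarrow L^p(0,T;W^{1-1/q,q}(\Bdry))$, $\partial_t\bu\in L^p_{\text{loc}}(0,T;W^{1,q}(\Omega))$ restricts to $W^{1-1/q,q}(\Bdry)$ by trace, and $\alpha\bu-\bm s(\bu)\in L^\infty_{\text{loc}}(0,T;W^{1,q}(\Omega))$ because the Nemytskii map of the Lipschitz function $\bm s$ preserves $W^{1,q}$). \autoref{thm:StokesStationarStrong} then gives a pointwise-in-time $W^{2,q}\times W^{1,q}$ bound for $(\bu(t),\pi(t))$; raising it to the $p$-th power and integrating over compact subintervals yields $\bu\in L^p_{\text{loc}}(0,T;W^{2,q}(\Omega))$ and $\pi\in L^p_{\text{loc}}(0,T;W^{1,q}(\Omega))$.

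The main obstacle is the middle step — showing $\tilde{\bm F}\in L^2(0,T;H)$, i.e.\ absorbing the bilinear terms $(\bv\cdot\nabla)\bu$, $(\bu\cdot\nabla)\bv$ — and this is precisely where the extra hypotheses $\bm f\in L^\infty(0,T;L^{t(\sigma)}(\Omega))$, $\bm h\in L^\infty(0,T;W^{-1/\sigma,\sigma}(\partial\Omega))$ enter, through the base regularity $\bu\in L^\infty(0,T;W^{1,\sigma}(\Omega))$ of \autoref{thm:NavierStokesFirstStep}. A secondary, purely bookkeeping issue is to keep track of a single exponent $q>2$ that survives all constraints (the value of $\sigma$, the interpolation of $\bv$, and $q\le p$), and to observe that, because only $\partial_t\bu(0)\in H$ rather than $V$, the improved regularity of $\bv$, and hence the conclusion, holds only locally in time.
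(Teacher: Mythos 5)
Your proposal is correct and follows essentially the same route as the paper: base regularity $\bu \in L^\infty_{\text{loc}}(0,T;W^{1,\sigma}(\Omega))$ from \autoref{thm:NavierStokesFirstStep}, then upgrading $\bv=\partial_t\bu$ via its linearized equation with $\tilde{\bm F}\in L^2_{\text{loc}}(0,T;H)$ and \autoref{thm:StokesEvolutionarStrong}(ii), interpolation to $L^p_{\text{loc}}(0,T;W^{1,q})$, and the stationary estimate of \autoref{thm:StokesStationarStrong}. The only (harmless) difference is bookkeeping: you bound $(\bv\cdot\nabla)\bu$ using $\bv\in L^2(0,T;L^r(\Omega))$ for all finite $r$, where the paper invokes $\bv\in L^\infty(0,T;L^q(\Omega))$; both rest on the same regularity of $\bv$.
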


\begin{proof}
Due to \autoref{thm:NavierStokesFirstStep} we immediately get
$$ \bu \in L^\infty_{\text{loc}} (0, T; W^{1, \sigma}(\Omega)) $$
and thus
$$ (\bu \cdot \nabla ) \bu  \in L^\infty_{\text{loc}} (0, T; L^{\sigma} (\Omega) ).$$ 
From \autoref{thm:NavierStokesRegular} we have $\partial_t \bu \in L^\infty (0, T; H) \cap L^2 (0, T; V)$ and it interpolates  into 
$$\partial_t \bu  \in L^p (0, T; L^{q} (\Omega) ), $$ 
where $q > 2$. Therefore,
$$ \bm f - \partial_t \bu -  (\bu \cdot \nabla ) \bu \in  L^p_{\text{loc}} (0, T; L^q (\Omega)),$$
which means that this (interior) term has the desired regularity to apply \autoref{thm:StokesStationarStrong}. It remains to show
$$\beta \bm h  - \beta \partial_t  \bu + \alpha \bu - \bm s (\bu ) \in L^p_{\text{loc}} (0, T; W^{1 - \frac{1}{q}, q}(\partial \Omega)).$$
The only problematic term is the time derivative, which needs to be improved.

To do so, we recall that thanks to \autoref{thm:NavierStokesRegular} there holds 
\begin{align*}
\langle \partial_t \bv, \bm \varphi \rangle  +    (\bv, \bm \varphi)_V   = \langle (\tilde{\bm f}, \tilde{\bm h}), \bm \varphi \rangle ,
\end{align*}
where
\begin{align*}
\tilde{\bm f} &= \partial_t \bm f  - (\bv \cdot \nabla ) \bu - (\bu \cdot \nabla ) \bv , \\
\tilde{\bm h} &= \partial_t \bm h + \frac{1}{\beta} (\alpha - \bm s'(\bu)) \bv  .
\end{align*}

Let us verify that $(\tilde{\bm f} , \tilde{\bm h}) \in L^2_{\text{loc}}  (0, T; H)$. In view of \autoref{lem:Decouple}, it is enough to show that
$\tilde{\bm f} \in L^2_{\text{loc}}  (0, T; L^2(\Omega))$ and $\tilde{\bm h} \in L^2_{\text{loc}} (0, T; L^2(\partial \Omega))$. For terms with time derivatives it follows from the assumptions.
 Because of the fact that $\sigma > 2$ we have $\bu \in L^\infty_{\text{loc}}(0, T; L^\infty(\Omega))$ and from \autoref{thm:NavierStokesRegular} follows $\nabla \bv \in L^2(0, T; L^2(\Omega))$. This information implies $(\bu \cdot \nabla ) \bv \in L^2_{\text{loc}}(0, T; L^2(\Omega))$. Next, because $\nabla \bu \in L^\infty_{\text{loc}} (0, T; L^\sigma(\Omega))$, $\sigma > 2$, and $\bv \in L^\infty (0, T; L^q(\Omega))$, for any $q < +\infty$, the Hölder's inequality gives $ (\bv \cdot \nabla ) \bu \in L^2 (0, T; L^2(\Omega))$. Therefore, $\tilde{\bm f} \in L^2_{\text{loc}} (0, T; L^2(\Omega))$. The integrability of boundary terms is now clear. Let us note that we implicitly used $\bm s(\bu) \cdot \bm n = 0$.

\iffalse

 To get it we can invoke \autoref{lem:Decouple}. Nevertheless, let us show that we can prove it without such a result. We need two things. First, $L^2$ integrability of both components, which we just did in the previous paragraph. Second, both components need to be compatible (see the definition of $H$). The couple $(\partial_t \bm f, \partial_t \bm h) \in L^2_{\text{loc}}  (0, T; H)$ by the assumption. The boundary term is also good; it is actually the trace of $W^{1, 2}(\Omega)$ function. Therefore, we have even $((\alpha - \bm s'(\bu)) \bv, \alpha - \bm s'(\bu)) \bv) \in L^2_{\text{loc}}  (0, T; V)$, see the Section 1.2. Thus, the remainings terms are
$$- (\bv \cdot \nabla ) \bu - (\bu \cdot \nabla ) \bv -(\alpha - \bm s'(\bu)) \bv \in L^2_{\text{loc}}  (0, T; L^2(\Omega)).$$
As we saw in the Section 1.2, this function can be completed by $\bm  0$ on the boundary and we then obtain 
$$ ( - (\bv \cdot \nabla ) \bu - (\bu \cdot \nabla ) \bv -(\alpha - \bm s'(\bu)) \bv, \bm 0 )  \in L^2_{\text{loc}}  (0, T; H). $$ 

\fi

Because the right-hand side $ (\tilde{\bm f}, \tilde{\bm h})$ of the evolutionary Stokes system belongs to $L^2_{\text{loc}}  (0, T; H)$, we can invoke \autoref{thm:StokesEvolutionarStrong}(ii) to achieve
$$\partial_t \bu = \bv \in L^\infty_{\text{loc}} (0, T; V) \cap  L^2_{\text{loc}} (0, T; W^{1,4} (\Omega)),  $$
which gives us for certain $q > 2$ that
$$ \partial_t  \bu \in L^p_{\text{loc}} (0, T; W^{1, q}(\Omega)), $$
by interpolation. This implies the desired regularity and \autoref{thm:StokesStationarStrong} gives us the last two inclusions in the assertion of the theorem. The first inclusion is a simple corollary of the fact that both $\bu$ and $\partial_t  \bu$ belong to $L^p_{\text{loc}} (0, T; W^{1, q}(\Omega))$ and $ p > 2$. Let us remark that the use of \autoref{thm:StokesEvolutionarStrong} above, gives us also information about the second time derivative, more specifically
$$ \partial_{tt}  \bu \in L^2_{\text{loc}} (0, T; H). $$
\qed \end{proof}

\begin{theorem}[Maximal regularity of NS]\label{thm:NavierStokesMaximal}
Let all the assumptions of \autoref{thm:NavierStokesRegular} hold and let us further assume that $\Omega \in \mathcal{C}^{1, 1}$ and $\bm s'$ is bounded.
\begin{itemize}
\item[(i)] Suppose that there hold
\begin{align*}
&\partial_t \bm F  \in L^2 (0, T; H), \\
&\bm f \in L^\infty (0, T; L^2(\Omega)) , \, \bm h \in L^\infty (0, T; W^{\frac{1}{2}, 2}(\partial \Omega)). 
\end{align*}
Then the unique weak solution of \eqref{eq:Equation}--\eqref{eq:DerivativeBoundary} satisfies
\begin{align*}
\bu & \in L^\infty_{\text{loc}} (0, T; W^{2,2}(\Omega) ) , \\
\pi & \in L^\infty_{\text{loc}} (0, T; W^{1,2}(\Omega).
\end{align*}
\item[(ii)] Suppose that $\bm s \in \mathcal{C}^2(\mathbb{R}^2)$, $\bm s''$ is bounded and for some $ 2 < p < +\infty$ there hold
\begin{align*}
&\partial_t \bm F  \in L^2 (0, T; H), \, \partial_{tt} \bm F \in L^2 (0, T;  V^*),  \\
&\bm f  \in L^\infty (0, T; L^p(\Omega)), \, \bm h \in L^\infty (0, T; W^{1-\frac{1}{p}, p}(\partial \Omega)), \\
&\partial_t \bm f \in L^\infty (0, T; L^{t(p)}(\Omega)), \, \partial_t \bm h \in L^\infty (0, T; W^{-\frac{1}{p}, p}(\partial \Omega)).
\end{align*}
Then we have for some $q > 2$ that
\begin{align*}
\bu & \in L^\infty_{\text{loc}} (0, T; W^{2,q}(\Omega) ) , \\
\pi & \in L^\infty_{\text{loc}} (0, T; W^{1,q}(\Omega) ) .
\end{align*}
\end{itemize}

\end{theorem}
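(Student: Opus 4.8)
\noindent\emph{Proof strategy.} The argument mirrors that of \autoref{thm:StokesEvolutionarMaximal}, with the convective term and the boundary nonlinearity absorbed into the data as in \autoref{thm:NavierStokesp-q}. For a.e.\ $t\in(0,T)$ the weak solution solves the \emph{stationary} Stokes system \eqref{eq:StokesStationary1}--\eqref{eq:StokesStationary4} with
\[
	\bm f - \partial_t\bu - (\bu\cdot\nabla)\bu \quad\text{in }\Omega,
	\qquad
	\beta\bm h - \beta\,\partial_t\bu + \alpha\bu - \bm s(\bu) \quad\text{on }\Bdry,
\]
so the whole point is to show that, for a suitable $q$, the first expression lies in $L^\infty_{\text{loc}}(0,T;L^q(\Omega))$ and the second in $L^\infty_{\text{loc}}(0,T;W^{1-1/q,q}(\Bdry))$ (with $q=2$ and $W^{1/2,2}$ in part (i)), and then to invoke \autoref{thm:StokesStationarHilbert}, resp.\ \autoref{thm:StokesStationarStrong}, pointwise in $t$, the uniform-in-$t$ control of the data passing to the conclusion. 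From \autoref{thm:NavierStokesRegular} I already have $\partial_t\bu\in L^\infty(0,T;H)\cap L^2(0,T;V)$, $\bu\in L^\infty(0,T;V)$, together with the evolutionary Stokes identity solved by $\bv:=\partial_t\bu$ with right-hand side $\tilde{\bm F}=(\tilde{\bm f},\tilde{\bm h})$.

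For part (i) I would first improve $\bu$ itself. Since $\bm f\in L^\infty(0,T;L^2(\Omega))\hookrightarrow L^\infty(0,T;L^{t(q)}(\Omega))$ and, $\Bdry$ being one-dimensional, $\bm h\in L^\infty(0,T;W^{1/2,2}(\Bdry))\hookrightarrow L^\infty(0,T;L^2(\Bdry))\hookrightarrow L^\infty(0,T;W^{-1/q,q}(\Bdry))$ for $q\in(2,4]$, \autoref{thm:NavierStokesFirstStep} with $p=\infty$ gives $\bu\in L^\infty_{\text{loc}}(0,T;W^{1,q}(\Omega))\hookrightarrow L^\infty_{\text{loc}}(0,T;L^\infty(\Omega))$; with $\nabla\bu\in L^\infty(0,T;L^2(\Omega))$ this yields $(\bu\cdot\nabla)\bu\in L^\infty_{\text{loc}}(0,T;L^2(\Omega))$, so the interior datum is where it should be. The delicate boundary term is $\beta\,\partial_t\bu$, for which an $L^2(\Bdry)$ trace does not suffice; I must show $\partial_t\bu\in L^\infty_{\text{loc}}(0,T;V)$. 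To this end I check $\tilde{\bm F}\in L^2_{\text{loc}}(0,T;H)$: the terms $\partial_t\bm f,\partial_t\bm h$ are controlled by $\partial_t\bm F\in L^2(0,T;H)$; $(\bu\cdot\nabla)\bv$ by $\bu\in L^\infty_{\text{loc}}(0,T;L^\infty)$ and $\nabla\bv\in L^2(0,T;L^2)$; $(\bv\cdot\nabla)\bu$ by H\"older with $\bv\in L^2(0,T;L^r)$, $r<\infty$, and $\nabla\bu\in L^\infty_{\text{loc}}(0,T;L^\sigma)$, $\sigma>2$; and $\tfrac1\beta(\alpha-\bm s'(\bu))\bv$ by the boundedness of $\bm s'$ and $\bv|_{\Bdry}\in L^\infty(0,T;L^2(\Bdry))$, the couple structure being handled by \autoref{lem:Decouple}. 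Then \autoref{thm:StokesEvolutionarStrong}(ii) applied to $\bv$ gives $\bv\in L^\infty_{\text{loc}}(0,T;V)\cap L^2_{\text{loc}}(0,T;W^{1,4}(\Omega))$ and, incidentally, $\partial_{tt}\bu\in L^2_{\text{loc}}(0,T;H)$, so $\beta\,\partial_t\bu\in L^\infty_{\text{loc}}(0,T;W^{1/2,2}(\Bdry))$; since $\bm s$ is Lipschitz, $\alpha\bu-\bm s(\bu)\in L^\infty(0,T;W^{1,2}(\Omega))$ has trace in the same space, and \autoref{thm:StokesStationarHilbert} finishes part (i).

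For part (ii) I bootstrap once more. Part (i) gives $\bu\in L^\infty_{\text{loc}}(0,T;W^{2,2}(\Omega))\hookrightarrow L^\infty_{\text{loc}}(0,T;W^{1,r}(\Omega))$ for all $r<\infty$ and $\bv\in L^\infty_{\text{loc}}(0,T;V)$, whence $(\bu\cdot\nabla)\bu\in L^\infty_{\text{loc}}(0,T;L^q(\Omega))$ for all $q<\infty$ and, with $\bm f\in L^\infty(0,T;L^p(\Omega))$, the interior datum lies in $L^\infty_{\text{loc}}(0,T;L^q(\Omega))$ for $q\in(2,p]$. For the boundary I must push $\partial_t\bu$ to $L^\infty_{\text{loc}}(0,T;W^{1,q}(\Omega))$, $q>2$, so that its trace lies in $W^{1-1/q,q}(\Bdry)$; this I obtain by applying \autoref{thm:StokesEvolutionarFirstStep} to $\bv$, after verifying $\tilde{\bm F},\partial_t\tilde{\bm F}\in L^2_{\text{loc}}(0,T;V^*)$ — this is where $\partial_{tt}\bm F\in L^2(0,T;V^*)$, $\partial_{tt}\bu\in L^2_{\text{loc}}(0,T;H)$ and the boundedness of $\bm s''$ enter, since $\partial_t\tilde{\bm h}$ produces $\bm s''(\bu)[\bv,\bv]$ and $(\alpha-\bm s'(\bu))\partial_{tt}\bu$ while $\partial_t\tilde{\bm f}$ produces $(\partial_{tt}\bu\cdot\nabla)\bu$, $(\bu\cdot\nabla)\partial_{tt}\bu$ and $(\bv\cdot\nabla)\bv$ — together with $\tilde{\bm f}\in L^\infty_{\text{loc}}(0,T;L^{t(q)}(\Omega))$ and $\tilde{\bm h}\in L^\infty_{\text{loc}}(0,T;W^{-1/q,q}(\Bdry))$ for $q\in(2,4)$, which follow from $\partial_t\bm f\in L^\infty(0,T;L^{t(p)}(\Omega))$, $\partial_t\bm h\in L^\infty(0,T;W^{-1/p,p}(\Bdry))$, the spatial bounds on $\bu,\bv$, and $L^2(\Bdry)\hookrightarrow W^{-1/q,q}(\Bdry)$ (valid for $q\le 4$ on a one-dimensional boundary). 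This yields $\bv\in L^\infty_{\text{loc}}(0,T;W^{1,q}(\Omega))$, hence $\beta\,\partial_t\bu$ has trace in $L^\infty_{\text{loc}}(0,T;W^{1-1/q,q}(\Bdry))$; the term $\alpha\bu-\bm s(\bu)$ is even in $L^\infty_{\text{loc}}(0,T;W^{1,q}(\Omega))$ because $\bm s'$ is bounded and $\nabla\bu\in L^q$. Collecting, \autoref{thm:StokesStationarStrong} applied pointwise in $t$ gives $\bu\in L^\infty_{\text{loc}}(0,T;W^{2,q}(\Omega))$ and $\pi\in L^\infty_{\text{loc}}(0,T;W^{1,q}(\Omega))$.

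I expect the main obstacle to be this second step of the $\partial_t\bu$-bootstrap in part (ii): verifying $\partial_t\tilde{\bm F}\in L^2_{\text{loc}}(0,T;V^*)$ is the only place where the hypotheses $\partial_{tt}\bm F\in L^2(0,T;V^*)$ and $\bm s''$ bounded are genuinely needed, and it requires a careful treatment of the terms containing $\partial_{tt}\bu$ (which lives only in $L^2_{\text{loc}}(0,T;H)$, so e.g.\ $(\bu\cdot\nabla)\partial_{tt}\bu$ must be paired against $V$ only after integrating by parts), together with bookkeeping of the Sobolev exponents that keeps the target $q$ in the admissible range $(2,4)$ so that the boundary embeddings of $W^{1/2,2}(\Bdry)$ and $L^2(\Bdry)$ into the negative-order spaces remain valid. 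The rest is routine interpolation and the stationary estimates already collected in Section~2.
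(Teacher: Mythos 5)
Your argument is correct and essentially coincides with the paper's proof: in part (i) you upgrade $\partial_t\bu$ to $L^\infty_{\text{loc}}(0,T;V)$ by checking $\tilde{\bm F}\in L^2_{\text{loc}}(0,T;H)$ and applying the evolutionary Stokes theory to $\bv=\partial_t\bu$ before invoking the stationary $W^{2,2}$ estimate, and in part (ii) you improve $\partial_t\bu$ to $L^\infty_{\text{loc}}(0,T;W^{1,q})$ by verifying $\partial_t\tilde{\bm F}\in L^2_{\text{loc}}(0,T;V^*)$ (which is precisely where $\partial_{tt}\bm F$, the boundedness of $\bm s''$ and the interior/boundary decoupling enter) before the stationary $W^{2,q}$ estimate — exactly the paper's route, including the identification of the problematic terms $(\bu\cdot\nabla)\partial_t\bv$ and $(\alpha-\bm s'(\bu))\partial_t\bv$. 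The only cosmetic differences are that you start part (ii) from the conclusion of part (i) instead of re-running the argument of \autoref{thm:NavierStokesp-q}, and that you cite \autoref{thm:StokesEvolutionarFirstStep} for $\bv$, which is just the packaged form of \autoref{thm:StokesEvolutionarStrong}(i) followed by \autoref{thm:StokesStationarWeak} that the paper spells out.
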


\begin{proof}
Concerning the first part of the theorem we use \autoref{thm:NavierStokesRegular} and then \autoref{thm:NavierStokesFirstStep} to get
\begin{align*}
\bm f - \partial_t \bu - (\bu \cdot \nabla ) \bu  \in L^\infty_{\text{loc}} (0, T; L^2(\Omega)). 
\end{align*}
However, on the boundary, we have just 
$$\partial_t \bu \in L^\infty (0, T; L^2(\partial \Omega)) \cap L^2(0, T; W^{\frac{1}{2}, 2}(\partial \Omega)), $$ which is not enough. In the same fashion as in the previous theorem, we obtain $ \partial_t \bu \in L^\infty_{\text{loc}} (0, T; V),$ which gives us $ -\beta \partial_t \bu  \in L^\infty_{\text{loc}} (0, T; W^{\frac{1}{2}, 2}(\partial \Omega)) $. Therefore
\begin{align*}
\beta \bm h -\beta \partial_t \bu + \alpha \bu - \bm s(\bu)  \in L^\infty_{\text{loc}} (0, T; W^{\frac{1}{2}, 2}(\partial \Omega))
\end{align*}
and we can use \autoref{thm:StokesStationarHilbert} to finish the proof of (i).

To show (ii), we proceed as in \autoref{thm:NavierStokesp-q} to obtain
\begin{align*}
&\bu \in L^\infty_{\text{loc}} (0, T; W^{1, q}(\Omega))  , \\
&\bv \in L^\infty_{\text{loc}} (0, T; V) \cap  L^2_{\text{loc}} (0, T; W^{1,4} (\Omega)), \\
&\partial_{t}  \bv \in L^2_{\text{loc}} (0, T; H) ,
\end{align*}
for some $q > 2$. Together with $L^\infty_{\text{loc}} (0, T; V) \hookrightarrow L^\infty_{\text{loc}} (0, T; L^q(\Omega)) $ we see that
$$ \bm f - \partial_t \bu - (\bu \cdot \nabla ) \bu  \in  L^\infty_{\text{loc}} (0, T; L^q (\Omega)) $$
holds. This is exactly the regularity, of the ``interior'' term, which is needed to apply \autoref{thm:StokesStationarStrong}.  Hence, to use it, we need to achieve
$$ \partial_t  \bu \in L^\infty_{\text{loc}} (0, T; W^{1, q}(\Omega)). $$
Then,
$$ \beta \bm h - \beta \partial_t \bu +\alpha \bu - \bm s(\bu)   \in  L^\infty_{\text{loc}} (0, T; W^{1- \frac{1}{q}, q} (\partial \Omega))$$ 
will follow and \autoref{thm:StokesStationarStrong} gives the result.

To improve the time derivative we move $\partial_t \bv$ in 
\begin{align*}
\langle \partial_t \bv, \bm \varphi \rangle  +    (\bv, \bm \varphi)_V   = \langle (\tilde{\bm f}, \tilde{\bm h}), \bm \varphi \rangle ,
\end{align*}
where
\begin{align*}
\tilde{\bm f} &= \partial_t \bm f  - (\bv \cdot \nabla ) \bu - (\bu \cdot \nabla ) \bv , \\
\tilde{\bm h} &= \partial_t \bm h + \frac{1}{\beta} (\alpha - \bm s'(\bu)) \bv  ,
\end{align*}
to the right-hand side and use \autoref{thm:StokesStationarWeak}; it is actually nothing else than use of \autoref{thm:NavierStokesFirstStep} for $\bv$ instead of $\bu$. Therefore, we need to check
\begin{align*}
\partial_t \bm f - \partial_t \bv - (\bv \cdot \nabla ) \bu  - (\bu \cdot \nabla ) \bv & \in L^\infty_{\text{loc}}(0, T; L^{t(q)}(\Omega) ),  \\
\beta \partial_t \bm h - \beta \partial_t \bv + \alpha  \bv - \bm s'(\bu) \bv  & \in L^\infty_{\text{loc}}(0, T; W^{-\frac{1}{q}, q}(\partial \Omega)). 
\end{align*}
For most terms it is straightforward. Our data $(\partial_t \bm f, \partial_t \bm h)$ are improved in the assumptions of the theorem, the boundary term $\alpha  \bv - \bm s'(\bu) \bv $ is clear thanks to $\text{tr}\, \bv \in L^{\infty}_{\text{loc}}(0, T; W^{\frac{1}{2}, 2}(\partial \Omega))$ and boundedness of $\bm s'$. The nonlinear term  $(\bv \cdot \nabla ) \bu  + (\bu \cdot \nabla ) \bv$ belongs to $L^\infty(0, T; L^{t(q)}(\Omega) )$ because of the fact that both $\bu$ and $\bv$ belong to $L^\infty_{\text{loc}} (0, T; W^{1, 2}(\Omega))$. 
The only problem can occur in the time derivative $\partial_t \bv$; we need to improve it.

Let us again take a look at the equation
\begin{align*}
\langle \partial_t \bv, \bm \varphi \rangle  +    (\bv, \bm \varphi)_V   = \langle (\tilde{\bm f}, \tilde{\bm h}), \bm \varphi \rangle 
\end{align*}
and notice that, if we show $(\partial_t \tilde{\bm f}, \partial_t \tilde{\bm h}) \in L^2(0, T; V^*)$, then \autoref{thm:StokesEvolutionarStrong}(i) can be used and gives us 
$$ \partial_t \bv \in L^\infty_{\text{loc}}(0, T; H). $$
Of course, as we already saw in \autoref{thm:StokesEvolutionarFirstStep}, this regularity is enough to establish that both $ \partial_t \bv \in L^\infty_{\text{loc}}(0, T; L^{t(q)}(\Omega) )$  and  $\partial_t \bv \in L^\infty_{\text{loc}}(0, T; W^{-\frac{1}{q}, q}(\partial \Omega))$ are satisfied.

To finish the proof it remains to show $(\partial_t \tilde{\bm f}, \partial_t \tilde{\bm h}) \in L^2(0, T; V^*)$. First, we verify that
\begin{align*}
\partial_t \tilde{\bm f} &= \partial_{tt} \bm f -2(\bv \cdot \nabla ) \bv - ( \bu \cdot \nabla)( \partial_t \bv ) - (\partial_t \bv \cdot \nabla ) \bu \in L^2(0, T; (W^{1, 2}_{\sigma, \bm n} (\Omega))^* ) ,\\
\partial_t \tilde{\bm h}  &= \partial_{tt} \bm h + \frac{1}{\beta} (\alpha  - \bm s'(\bu)) \partial_t \bv  - \frac{1}{\beta} \bm s''(\bu) \bv \cdot \bv  \in  L^2(0, T; L^2(\partial \Omega))  .
\end{align*}
The worst terms are $( \bu \cdot \nabla)( \partial_t \bv )$ and $(\alpha  - \bm s'(\bu)) \partial_t \bv $. Nevertheless, our regularity of $\bu$ and $\bv$ is just enough to establish the required inclusions (together with the prescribed assumptions on $\partial_{tt} \bm f, \partial_{tt} \bm h$ and boundedness of $\bm s', \bm s''$). Second, we need the compatibility of the right-hand sides. As we already explained above the \autoref{lem:Decouple}, $(W^{1, 2}_{\sigma, \bm n} (\Omega))^* \times L^2(\partial \Omega) \hookrightarrow V^*$, and therefore $(\partial_t \tilde{\bm f}, \partial_t \tilde{\bm h}) \in L^2(0, T; V^*)$ indeed holds.

\iffalse

The most problematic are the boundary terms. Observe that $\bm s''(\bu) \bv \cdot \bv $ is the trace of Sobolev function and we thus can do the same trick as in the previous theorem. On the other hand, $(\alpha  - \bm s'(\bu)) \partial_t \bv$ is not trace of something. But, we know that $( (\alpha  - \bm s'(\bu)) \partial_t \bv, (\alpha  - \bm s'(\bu)) \partial_t \bv) \in L^2_{\text{loc}} (0, T; H)$, which means that this term can be handled too. Therefore, we need to add terms $\pm \bm s''(\bu) \bv \cdot \bv$ and $\pm (\alpha  - \bm s'(\bu)) \partial_t \bv$ to $\partial_t \tilde{\bm f}$; everything then works nicely and $(\partial_t \tilde{\bm f}, \partial_t \tilde{\bm h}) \in L^2(0, T; V^*)$ indeed holds and the proof is complete now.
\fi

\qed \end{proof}

\subsection{Regularity for systems with quadratic growth}

Here, we show the final regularity result, i.e. \autoref{thm:FirstMainTheorem}. Of course, its simple case $\bm S = \nu \bm {Du}$, with $\nu > 0$ constant, was treated in detail in \autoref{thm:NavierStokesMaximal}. Thus, from now on, we focus on the general case of the Cauchy stress $\bm S$ with a potential $U$, $U(0) = 0$, which is a $\mathcal{C}^3 (\mathbb{R}^+)$ function satisfying the estimates
\begin{align*} 
( \bm S ( \bm{D} ) - \bm S (\bm{E} ) ) : (\bm{D} - \bm{E}) & \geq c_1 | \bm{D} - \bm{E} |^2  ,  \\
 |\partial_{\bm D} U( |\bm D|^2)| =  |\bm S (\bm{D}) | & \leq c_2  |\bm D  | , \\
\partial_{\bm D}^2 U( |\bm D|^2) \bm{E} : \bm{E}  =  \partial_{\bm D} \bm S ( \bm{D} ) \bm{E} : \bm{E}  & \geq  c_1 | \bm E |^2 , \\
|\partial_{\bm D}^2 U( |\bm D|^2) | + |\partial_{\bm D}^3 U( |\bm D|^2) |  & \leq C,
\end{align*}
for all symmetrical $2\times 2$ matrices $\bm D, \bm E$.

\iffalse

\begin{theorem}\label{thm:QuadraticMaximal}
Let all assumptions of \autoref{thm:LinearMaximal} hold. Then there is $q > 2$ such that the unique weak solution of \eqref{eq:Equation}--\eqref{eq:GrowthBoundary} with $\bm S$ defined as above satisfies
\begin{align*}
\bu & \in L^\infty_{\text{loc}} (0, T; W^{2,q}(\Omega) ) , \\
\pi & \in L^\infty_{\text{loc}} (0, T; W^{1,q}(\Omega) ) .
\end{align*}
\end{theorem}

\fi

\begin{proof}[\textbf{Proof of \autoref{thm:FirstMainTheorem}.}]
We will not provide all the details; we only sketch how to modify previously developed methods, i.e. how to deal with the new non-linear term.

\underline{Step 1: Galerkin.} We start with repeating the proof \autoref{thm:NavierStokesRegular}. When differentiating the equation with respect to time we get the following expression coming from the elliptic term 
\begin{align*}
\int\limits_{\Omega} \partial_t \left( \bm S ( \bm{Du}^n ) \right)  : \bm{D}(\partial_t \bu^n) 
& = 
\int\limits_{\Omega} \partial_{\bm D} ( \bm S ( \bm{Du}^n ) ) \bm{D}(\partial_t \bu^n) : \bm{D}(\partial_t \bu^n) \\
&= 
\int\limits_{\Omega} \partial_{\bm D}^2 U( |\bm {Du}^n|^2 )   \bm{D}(\partial_t \bu^n) : \bm{D}(\partial_t \bu^n) \\
& \geq 
c_1 \int\limits_{\Omega}  | \bm{D}(\partial_t \bu^n) |^2 ,
\end{align*} 
where we used our assumption $\partial_{\bm D}^2 U( |\bm D|^2) \bm{E} : \bm{E}  \geq  c_1 | \bm E |^2$. We are thus able to control $L^2$-norm of $\bm{D}(\partial_t \bu^n)$ just as in the linear case. The rest of the proof is the same and we obtain
\begin{align*}
&\partial_t \bu \in L^\infty_{\text{loc}} (0, T; H) \cap L^2_{\text{loc}} (0, T; V) , \\
&\bu \in L^\infty_{\text{loc}} (0, T; V) .
\end{align*}
The corresponding problem for $\bv=\partial_t \bu$ will have the form
\begin{align*}
\langle \partial_t \bv, \bm \varphi \rangle  +  2\int\limits_{\Omega} U'( |\bm {Du}|^2)  \bm{Dv} : \bm{D\varphi} + \alpha \int\limits_{\partial\Omega} \bv \cdot \bm \varphi     = \langle \tilde{\bm F}, \bm \varphi  \rangle ,
\end{align*}
where 
\begin{align*}
\tilde{\bm F} &= (\tilde{\bm f}, \tilde{\bm g}), \\
\tilde{\bm f} &= \partial_t \bm f - 4 U''(|\bm {Du} |^2) \bm {Du} \bm {Du}  \bm {Dv}   - (\bv  \cdot \nabla) \bu - (\bu \cdot \nabla) \bv , \\
\tilde{\bm h} &= \partial_t \bm h + \frac{1}{\beta} (\alpha - \bm s'(\bu)) \bv  .
\end{align*}
Let us note that due to $|\partial_{\bm D}^2 U( |\bm D|^2) | \leq C$ we have the estimate 
$$4 |U''(|\bm {Du} |^2) \bm {Du} \bm {Du}  \bm {Dv} | \leq C | \bm {Dv}|.$$ It means that all terms are sufficiently integrable.

\underline{Step 2: Auxiliary result $\bu \in L^{\infty}_{\text{loc}} (0, T; W^{1, q} (\Omega)) $.} Here, we repeat the proof of \autoref{thm:NavierStokesFirstStep}. Recall that the leading elliptic term is given by $\bm S (\bm{Du}) = 2U'( |\bm{Du}|^2) \bm{Du}$. Therefore, there is no problem, because we can denote 
$$\bm A(t, x) := 2U'( |\bm{Du}|^2) $$ 
and use \autoref{thm:StokesStationarWeak}, together with the final remark in Section 2.2, to obtain
\begin{align*}
\bu & \in L^{\infty}_{\text{loc}} (0, T; W^{1, q} (\Omega)) 
\end{align*}
for some $q > 2$.

\underline{Step 3: First improvement of $\bv$.} Now, we replicate the method used in \autoref{thm:NavierStokesp-q}, i.e. we use \autoref{thm:StokesEvolutionarStrong}(ii) for the system
\begin{align*}
\langle \partial_t \bv, \bm \varphi \rangle  +  \int\limits_{\Omega} \bm{Dv} : \bm{D\varphi} + \alpha \int\limits_{\partial\Omega} \bv \cdot \bm \varphi     = \langle ( \tilde{\bm f} -  2U'( |\bm{Du}|^2) \bm{Du} + \bm{Dv}, \tilde{\bm h}), \bm \varphi  \rangle .
\end{align*}
The worst term in the first component is $\bm{Dv}$, but from the first step we already have $\bv \in L^2_{\text{loc}}(0, T; V)$, therefore, we achieve
$$( \tilde{\bm f} -  \bm A \bm{Dv} + \bm{Dv}, \tilde{\bm h}) \in L^2_{\text{loc}} (0, T; H).$$ \autoref{thm:StokesEvolutionarStrong} then gives us
\begin{align*}
&\bv \in L^\infty_{\text{loc}}(0, T; V) \cap L^2_{\text{loc}}(0, T; W^{1, 4}(\Omega)), \\
&\partial_t \bv \in L^2_{\text{loc}}(0, T; H).
\end{align*}
At this point we have $L^\infty_{\text{loc}} (0, T; L^q (\Omega))$ regularity of the interior term 
$$ \bm f - \partial_t \bu - (\bu \cdot \nabla ) \bu$$
and we need to improve $\partial_t \bu $ by one space derivative to control also the boundary term 
$$\beta \bm h - \beta \partial_t \bu + \alpha \bu - \bm s(\bu) $$
in the space $ L^\infty_{\text{loc}} (0, T; W^{1 - \frac{1}{q}, q} (\partial \Omega))$.

\underline{Step 4: Improvement of $\partial_t \bv$.} Here, just like in the proof of \autoref{thm:NavierStokesMaximal}, we show 
$$( \partial_t  \tilde{\bm f} -  \partial_t \left( 2U'( |\bm{Du}|^2) \bm{Du} \right)  + \bm D (\partial_t \bv), \partial_t \tilde{\bm h}) \in L^2_{\text{loc}} (0, T; V^*) .$$
We see that we have just enough information to guarantee it; let us just note that in $\partial_t  \tilde{\bm f}$ is contained the third derivative of $U$. Therefore, we use the second part of \autoref{thm:StokesEvolutionarStrong} and obtain
\begin{align*}
&\partial_t \bv \in L^{\infty}_{\text{loc}}(0, T; H).
\end{align*}

\underline{Step 5: Second improvement of $\bv$.} At this point, we move the time derivative of $\bv$, in the equation
\begin{align*}
\langle \partial_t \bv, \bm \varphi \rangle  +  \int\limits_{\Omega} \bm A  \bm{Dv} : \bm{D\varphi} + \alpha \int\limits_{\partial\Omega} \bv \cdot \bm \varphi     = \langle \tilde{\bm F}, \bm \varphi  \rangle ,
\end{align*}
to the right-hand side. Recall that $\bm A (t, x) = 2U'( |\bm{Du}|^2) $. As we already saw several times, $ \partial_t \bv \in L^\infty_{\text{loc}}(0, T; L^{t(q)}(\Omega) )$  and  $\partial_t \bv \in L^\infty_{\text{loc}}(0, T; W^{-\frac{1}{q}, q}(\partial \Omega))$ now hold. As above,
$$\left|4 U''(|\bm {Du} |^2) \bm {Du} \bm {Du}  \bm {Dv} \right| \leq C \left|\bm{Dv} \right| \in L^\infty_{\text{loc}}(0, T; L^{t(q)}(\Omega)).$$ 
Therefore, we can apply \autoref{thm:StokesStationarWeak} to this problem and get
\begin{align*}
&\bv \in L^{\infty}_{\text{loc}}(0, T; W^{1, q}(\Omega)).
\end{align*}

\underline{Step 6: Final conclusion.} Because \autoref{thm:StokesStationarStrong} holds also with the matrix $\bm A $ in the leading elliptic term, 
we can apply it to the system
\begin{align*}
\int\limits_{\Omega} \bm A \bm{Du} : \bm{D\varphi} + \alpha \int\limits_{\partial\Omega} \bu \cdot \bm \varphi     = \langle \bm F, \bm \varphi  \rangle  - \langle \partial_t \bu, \bm \varphi \rangle  .
\end{align*}
Thanks to $\partial_t \bu\in L^{\infty}_{\text{loc}}(0, T; W^{1, q}(\Omega))$ we get the desired regularity and the proof is complete.

\qed \end{proof}

\section{Dimension of the attractor}

We will now derive explicit estimates of the (fractal) dimension
of $\attr \subset H$, the global attractor to the system
\begin{align}	
\partial_t \bu - \diver \nu \bm{Du} + (\bu \cdot \nabla)\bu + \nabla\pi &= \bm f \qquad \textrm{in $(0, T) \times \Omega$}, \label{est-r1} \\
 \diver \bu & = 0 \qquad \textrm{in $(0, T) \times \Omega$}, \\				
\beta \partial_t \bu + \alpha \bu + [(\nu \bm{Du}) \bm n]_{\tau} &= \beta \bm h \qquad \textrm{on $(0, T) \times \partial \Omega$},\label{est-r2} \\
\quad \bu \cdot \bm n & = 0 \qquad \textrm{on $(0, T) \times \partial \Omega$}, \\
\bu(0) & = \bu_0 \qquad \textrm{in $\overline{\Omega}$} 
\end{align}
in terms of the data of the problem, that is to say,
the external forces $\bm f$ and $\bm h$, the constants $\nu$,
$\alpha$, $\beta$ and the characteristic length $\ell = \diam \Omega$. 
\par
We will now focus on the autonomous problem, i.e. the right-hand side
$\bm F = (\bm f, \bm h)$ is independent of time. Because of its uniqueness, the solution
semigroup $S(t):H \to H$, for $t\ge 0$, is well defined and continuous,
cf. \autoref{thm:NavierStokesUniqueness}. Existence of the global attractor
is also straightforward, see e.g. \cite[Theorem 1.2]{PrPr23}.
\par
We will apply the method of Lyapunov exponents, see \autoref{thm:Dimension}.
There are two main ingredients here. First, we need to verify the
differentiability of the solution operator. This crucially relies on
the regularity $\bu \in L^{\infty}(0,T;W^{2,q}(\Omega))$, for
some $q>2$, which is provided by \autoref{thm:FirstMainTheorem}. Note that as 
$\bm F $ does not depend on time, its assumptions
reduce to $\bm f \in L^p(\Omega)$, $\bm h \in W^{1-1/p,p}(\Bdry)$ 
for a certain $p>2$. Second, we want to estimate the trace of the
linearized operator.
\par
For the sake of simplicity, we only work with 
linear constitutive relations, but the whole procedure also works
if $\bm S$, $\bm s$ are non-linear functions with bounded derivatives.

\subsection{Differentiability of the solution operator}

Before we start, we need to make some notation and preparation. Two explicit a priori estimates are crucial here, namely:
\begin{align*}	
B_0 &= \sup_{\bu_0 \in \attr} \norm{\bu_0}{H}{},
	\\				
B_1 &= \sup_{\bu_0 \in \attr} \limsup_{t\to\infty}
	\frac1t \int\limits_0^t \norm{\bm{D u}}{\ldvaom}{2} \,{\rm d}\tau .
\end{align*}
The last integral is taken along solutions starting from $\bu_0$. We work with $\bm F \in H$ (and even better).
Testing the equation by $\bu$ in \eqref{eq:WF} and using \eqref{eq:Coercivity}, \eqref{eq:CoercivityBoundary} we obtain
\[
	\frac{1}{2} \cdot \ddt \norm{\bu}{H}{2} + c_1 \int\limits_{\Omega} |\bm{Du}|^2 + \alpha  c_3 \int\limits_{\partial \Omega} \left( |\bu|^2 +  |\bu|^s \right)
	= (\bm F,\bu)_H .
\] 
The following simple estimates will
be used repeatedly:
\begin{align}		\label{malfa}
	\norm{\bu}{V}{2} &\ge \calfa \norm{\bu}{W^{1,2}(\Omega)}{2},
		\qquad \calfa := \min\{1,\alpha\},	
\\		\label{Mbeta}
	\norm{\bu}{H}{2} &\le \cbeta \norm{\bu}{L^2(\Omega\times\partial \Omega)}{2},
		\qquad \cbeta := \max\{1,\beta\},    
\end{align}
where $L^2(\Omega\times \partial \Omega)=
L^2(\Omega) \times L^2(\partial \Omega)$ has the standard norm.

We can estimate
\begin{align*}
	 \ddt \norm{\bu}{H}{2} + 2c_1 \int\limits_{\Omega} |\bm{Du}|^2 + 2\alpha  c_3   \int\limits_{\partial \Omega} |\bu|^2
	& \leq 2|| \bm F ||_H || \bu ||_H  \\
	 \ddt \norm{\bu}{H}{2} + 2m || \bu ||_V^2
	& \leq 2|| \bm F ||_H || \bu ||_H  \\ 
     \ddt \norm{\bu}{H}{2}
	& \leq 2|| \bu ||_H \left( || \bm F ||_H  - m \frac{\calfa}{\cbeta} || \bu ||_H \right) ,
\end{align*}
where 
\begin{align} \label{eq:m}
   m :=   \min\{ c_1,  c_3  \}.
\end{align}
It follows that
\begin{align*}	
	B_0 &\le \frac{1}{m} \cdot \frac{\cbeta}{\calfa} \norm{\bm F}{H}{},
	\\			
	B_1 & \le \frac{B_0}{c_1} \norm{\bm F}{H}{} \le \frac{1}{m c_1} \cdot \frac{\cbeta}{\calfa} 
			\norm{\bm F}{H}{2} .
\end{align*}
Moreover, 
\begin{align*}
    \mathcal{B} := \overline{ \bigcup_{t \geq \tau} S(t) B(0, B_0) }
\end{align*}
is uniformly absorbing, positively invariant, and closed set for any fixed $\tau > 0$.

Now, we consider a formal linearization of our system \eqref{eq:Equation}--\eqref{eq:Initial}, i.e. 
\begin{align}
\partial_t \bU - \text{div}\,[\partial_{\bm D} \bm S (\bm{Du}) \bm{DU} ] + (\bU \cdot \nabla ) \bu + (\bu \cdot \nabla ) \bU + \nabla \sigma &= 0   , \label{eq:LinearEquation} \\
\text{div}\ \bU &= 0  \label{eq:LinearDiv} 
\end{align}
in $(0,T) \times \Omega$ together with
\begin{align}
\beta \partial_t \bU +   \bm s' (\bu)\bU  + [ (\partial_{\bm D} \bm S (\bm{Du}) \bm{DU}) \bm n ]_\tau &= 0 \quad \text{ on } (0,T) \times \partial \Omega , \label{eq:LinearDynamic} \\
\bU \cdot \bm n &= 0  \quad \text{ on } (0,T) \times \partial \Omega , \label{eq:LinearImpermeability}\\
\bU(\bm 0) &= \bv_0 - \bu_0  \quad \text{ in } \overline{\Omega} . \label{eq:LinearInitial}
\end{align}
Due to \eqref{eq:DerivativeBoundary}, \eqref{eq:StressDerivative} it clearly has a unique weak solution. We can prove the following.

\begin{theorem}
The solution operator $\mathcal{L}_t$ of \eqref{eq:LinearEquation}--\eqref{eq:LinearInitial} is a uniform quasidifferential to $S_t$ on $\mathcal{B}$, i.e. for any fixed $t > 0$ there holds
\begin{align}
|| \bv(t) - \bu(t) - \bU(t) ||_H = o ( ||\bv_0 - \bu_0 ||_H ), \quad ||\bv_0 - \bu_0 ||_H \rightarrow 0 , \label{eq:QuasiDiff} 
\end{align}
where $\bv, \bu$ solve \eqref{eq:Equation}--\eqref{eq:StressDerivative} with $\bv_0, \bu_0 \in \mathcal{B}$ respectively and $\bm U$ solves \eqref{eq:LinearEquation}--\eqref{eq:LinearInitial}.
\end{theorem}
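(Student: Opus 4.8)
The plan is to estimate the remainder $\bm\theta := \bv - \bu - \bU$ by a direct energy argument. Setting $\bm w := \bv - \bu$ and subtracting the equations for $\bv$ and $\bu$, and then subtracting \eqref{eq:LinearEquation}--\eqref{eq:LinearImpermeability}, one finds that $\bm\theta$ satisfies, with $\bm\theta(0) = \bm 0$, a Stokes-type system with principal part $\partial_t\,\cdot\, - \diver[\partial_{\bm D}\bm S(\bm{Du})\bm D\,\cdot\,]$ in $\Omega$ and $\beta\partial_t\,\cdot\, + \bm s'(\bu)\,\cdot\, + [(\partial_{\bm D}\bm S(\bm{Du})\bm D\,\cdot\,)\bm n]_\tau$ on $\partial\Omega$, and with a right-hand side consisting of the transport terms $-(\bm\theta\cdot\nabla)\bu - (\bu\cdot\nabla)\bm\theta$, the bilinear term $-(\bm w\cdot\nabla)\bm w$, and --- when $\bm S$, $\bm s$ are genuinely nonlinear --- the Taylor remainders $R_{\bm S} := \bm S(\bm{Dv}) - \bm S(\bm{Du}) - \partial_{\bm D}\bm S(\bm{Du})\bm{Dw}$ and $R_{\bm s} := \bm s(\bv) - \bm s(\bu) - \bm s'(\bu)\bm w$ (the pressure gradient being killed on testing by elements of $V$). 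Since $U\in\mathcal C^3$ has bounded second and third derivatives and $\bm s''$ is bounded, Taylor expansion gives $|R_{\bm S}|\le C|\bm{Dw}|^2$ and $|R_{\bm s}|\le C|\bm w|^2$; for the linear laws of \autoref{thm:SecondMainTheorem} both vanish identically.

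Testing this equation by $\bm\theta$, the coercivity assumptions \eqref{eq:StressDerivative}, \eqref{eq:DerivativeBoundary} bound the principal part from below by $\tfrac12\ddt\norm{\bm\theta}{H}{2} + c_1\norm{\bm{D\theta}}{\ldvaom}{2} + \alpha c_5\norm{\bm\theta}{\ldvage}{2}$, while $\int_\Omega(\bu\cdot\nabla)\bm\theta\cdot\bm\theta = 0$ because $\diver\bu = 0$ and $\bu\cdot\bm n = 0$. Hölder's inequality, the interpolation \eqref{eq:Interpolation2} and Korn's inequality give $\bigl|\int_\Omega(\bm\theta\cdot\nabla)\bu\cdot\bm\theta\bigr| \le \varepsilon\norm{\bm{D\theta}}{\ldvaom}{2} + C\norm{\bu}{V}{2}\norm{\bm\theta}{H}{2}$, and an integration by parts turns the source term into $\int_\Omega(\bm w\cdot\nabla)\bm w\cdot\bm\theta = -\int_\Omega(\bm w\cdot\nabla)\bm\theta\cdot\bm w \le \varepsilon\norm{\bm{D\theta}}{\ldvaom}{2} + C\norm{\bm w}{L^4(\Omega)}{4}$. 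The remainders are controlled by $\bigl|\int_\Omega R_{\bm S}:\bm{D\theta}\bigr| \le \varepsilon\norm{\bm{D\theta}}{\ldvaom}{2} + C\norm{\bm{Dw}}{L^4(\Omega)}{4}$ and $\bigl|\int_{\partial\Omega}R_{\bm s}\cdot\bm\theta\bigr| \le \varepsilon\norm{\bm\theta}{\ldvage}{2} + C\norm{\bm w}{L^4(\partial\Omega)}{4}$; here \autoref{thm:FirstMainTheorem} is used, since for $q>2$ it yields $\bu,\bv\in L^\infty_{\text{loc}}(0,T;W^{2,q}(\Omega))\hookrightarrow L^\infty_{\text{loc}}(0,T;W^{1,\infty}(\Omega))$, so $\bm{Dw}$ and $\trace\bm w$ are bounded in $L^\infty$ with constants depending only on $t$ and the bound on $\mathcal B$, whence $\norm{\bm{Dw}}{L^4(\Omega)}{4}\le C\norm{\bm{Dw}}{\ldvaom}{2}$ and $\norm{\bm w}{L^4(\partial\Omega)}{4}\le C\norm{\bm w}{\ldvage}{2}$. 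Absorbing the $\varepsilon$-terms on the left leaves
\[
   \ddt\norm{\bm\theta}{H}{2} + c\,\norm{\bm{D\theta}}{\ldvaom}{2} \le C\norm{\bu}{V}{2}\norm{\bm\theta}{H}{2} + C\bigl(\norm{\bm w}{L^4(\Omega)}{4} + \norm{\nabla\bm w}{\ldvaom}{2} + \norm{\bm w}{\ldvage}{2}\bigr).
\]

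Finally, $\norm{\bm w}{L^4(\Omega)}{4}\le c\,\norm{\bm w}{\ldvaom}{2}\norm{\bm w}{W^{1,2}(\Omega)}{2}\le c\,\norm{\bm w}{H}{2}\norm{\bm w}{V}{2}$, so by the continuous-dependence estimates \eqref{eq:ControlOfDifference1}--\eqref{eq:ControlOfDifference2} together with the bounds on $\int_0^t\norm{\bm w}{V}{2}$ and $\int_0^t\norm{\bm w}{\ldvage}{2}$ obtained in the proof of \autoref{thm:NavierStokesUniqueness}, the time integral of the whole bracket is at most $C(\norm{\bm w(0)}{H}{4} + \norm{\bm w(0)}{H}{2})$. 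As $\bm\theta(0) = \bm 0$ and $\int_0^t\norm{\bu}{V}{2}\,{\rm d}\tau$ is finite and bounded uniformly for $\bu_0\in\mathcal B$, Grönwall's inequality gives $\norm{\bm\theta(t)}{H}{2}\le C(t)\norm{\bm w(0)}{H}{4}$ once $\norm{\bm w(0)}{H}{}\le 1$, so $\norm{\bm\theta(t)}{H}{} = O(\norm{\bm w(0)}{H}{2}) = o(\norm{\bv_0 - \bu_0}{H}{})$ with $C(t)$ uniform on $\mathcal B$, which is \eqref{eq:QuasiDiff}; linearity of $\mathcal L_t$ in $\bv_0-\bu_0$ and the bound $\norm{\bU(t)}{H}{}\le C(t)\norm{\bv_0-\bu_0}{H}{}$ follow from the same energy method applied directly to \eqref{eq:LinearEquation}--\eqref{eq:LinearInitial}. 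The main obstacle is the quadratic source $(\bm w\cdot\nabla)\bm w$: one must verify that its accumulated effect over $(0,t)$ is $O(\norm{\bm w(0)}{H}{2})$, which rests precisely on the continuous-dependence estimates of \autoref{thm:NavierStokesUniqueness}; in the genuinely nonlinear case the additional obstacle --- absorbing the $|\bm{Dw}|^2$-type remainders --- is exactly what forces the use of the strong $W^{2,q}$-regularity of \autoref{thm:FirstMainTheorem}.
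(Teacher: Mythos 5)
Your setup (energy estimate for the remainder $\bm\theta=\bm w-\bU$, Taylor expansion of $\bm S$, $\bm s$, integration by parts on $(\bm w\cdot\nabla)\bm w$, Gr\"onwall) is the same as the paper's, and everything up to the differential inequality is fine. The gap is in how you integrate the source terms in time. By replacing $\norm{\bm{Dw}}{L^4(\Omega)}{4}$ by $C\norm{\bm{Dw}}{\ldvaom}{2}$ and $\norm{\bm w}{L^4(\Bdry)}{4}$ by $C\norm{\bm w}{\ldvage}{2}$ (using the $L^\infty$ bound from $W^{2,q}\hookrightarrow W^{1,\infty}$), you reduce the quartic sources to quadratic ones, and the only control available for those is the continuous-dependence estimate $\int_0^t\norm{\bm w}{V}{2}\le C\norm{\bm w(0)}{H}{2}$. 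Hence your accumulated source is $C\bigl(\norm{\bm w(0)}{H}{4}+\norm{\bm w(0)}{H}{2}\bigr)$, which is only $O(\norm{\bm w(0)}{H}{2})$, not $o(\norm{\bm w(0)}{H}{2})$. Your final line, ``Gr\"onwall gives $\norm{\bm\theta(t)}{H}{2}\le C(t)\norm{\bm w(0)}{H}{4}$ once $\norm{\bm w(0)}{H}{}\le 1$,'' silently discards the quadratic term; but for $\norm{\bm w(0)}{H}{}\le 1$ it is the quadratic term that dominates the quartic one, so what you have actually proved is only $\norm{\bm\theta(t)}{H}{}=O(\norm{\bv_0-\bu_0}{H}{})$, which does not give the little-o in \eqref{eq:QuasiDiff} and hence not differentiability.

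The fix is exactly where the paper spends its effort: you must extract a strictly superquadratic power of $\norm{\bm w(0)}{H}{}$ from the $|\bm{Dw}|^2$-type and boundary terms, not merely a bounded prefactor. The paper does this by interpolation rather than by the crude $L^\infty$ bound: it writes $\norm{\nabla\bm w}{4}{4}\le c\norm{\nabla\bm w}{2}{2+\gamma}\norm{\nabla\bm w}{1,q}{2-\gamma}$ (inequality \eqref{eq:Interpolation3}), uses the uniform bound $\bm w\in L^\infty(0,T;W^{2,q}(\Omega))$ from the strong regularity, the bound $\int_0^t\norm{\nabla\bm w}{2}{2}\le C\norm{\bm w(0)}{H}{2}$, and, crucially, the additional gain $\sup_t\norm{\nabla\bm w}{2}{}\le c\sup_t\norm{\bm w}{2}{1/2}\norm{\bm w}{2,2}{1/2}\le C\norm{\bm w(0)}{H}{1/2}$ coming from \eqref{eq:Interpolation4} and \eqref{eq:ControlOfDifference1}; this yields $\int_0^t\norm{\bm{Dw}}{4}{4}\le C\norm{\bm w(0)}{H}{2+\gamma/2}$, and the boundary quartic term is handled analogously through the trace and Korn inequalities. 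With those superquadratic bounds the Gr\"onwall step does give $\norm{\bm\theta(t)}{H}{2}\le C(t)\norm{\bm w(0)}{H}{2+\delta}$ and hence \eqref{eq:QuasiDiff}; without them your argument stops one power short.
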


\begin{proof}
We start with subtracting the equations for $\bm w := \bv - \bu$ and $\bU$ to obtain that 
\begin{align*}
\partial_t (\bm w - \bU) - \text{div}\, & \left[  \bm S (\bm {Dv}) - \bm S (\bm {Du})- \partial_{\bm D} \bm S (\bm{Du}) \bm{DU} \right] \\
& \qquad +(\bv \cdot \nabla ) \bv - (\bu \cdot \nabla ) \bu - (\bU \cdot \nabla )  \bu - (\bu \cdot \nabla ) \bU \\
& \qquad + \nabla \pi - \nabla \sigma = 0 .
\end{align*}
Next, we test it by $\bm w - \bU$, which leads to 
\begin{align}
\frac{1}{2} \cdot \frac{{\rm d}}{{\rm d}t} || \bm w - \bU||_H^2  + I_\Omega + I_{\partial \Omega} = J, \label{eq:Rovnost}
\end{align}
where
\begin{align*}
I_\Omega & := \int\limits_\Omega  \left[  \bm S (\bm {Dv}) - \bm S (\bm {Du})- \partial_{\bm D} \bm S (\bm{Du}) \bm{DU} \right] : \bm{D}(\bm w - \bU) , \\
I_{\partial \Omega} & := \int\limits_{\partial \Omega} \left[  \bm s (\bv) - \bm s (\bu)- \bm s' (\bu) \bU \right] : (\bm w - \bU)  , \\
J & := -\int\limits_\Omega  \left[ (\bv \cdot \nabla ) \bv -  (\bu \cdot \nabla ) \bu - (\bU \cdot \nabla ) \bu - ( \bu \cdot \nabla ) \bU  \right] \cdot (\bm w - \bU) .
\end{align*}

Now, we need to estimate these three integrals. Thanks to the differentiability of both $\bm S$ and $\bm s$ we can use the mean value theorem to find $\theta^1, \theta^2 \in [0, 1]$ such that 
\begin{align*}
I_\Omega 
&= \int\limits_\Omega  \left[  \partial_{\bm D} \bm S (\bm {Du} + \theta^1 \bm {Dw}) \bm {Dw} - \partial_{\bm D} \bm S (\bm{Du}) \bm{DU} \right] : \bm{D}(\bm w - \bU)  \\
&= I_\Omega^1 + I_\Omega^2 , \\
I_{\partial \Omega}
&= \int\limits_{\partial \Omega}  \left[   \bm s' (\bu + \theta^2 \bm w) \bm w - \bm s' (\bu) \bU \right] : (\bm w - \bU)  \\
&= I_{\partial \Omega}^1 + I_{\partial \Omega}^2 ,
\end{align*}
where
\begin{align*}  
I_\Omega^1 &= \int\limits_\Omega \partial_{\bm D} \bm S (\bm{Du}) \bm{D}(\bm w - \bU) : \bm{D}(\bm w - \bU), \\ 
I_\Omega^2 &= \int\limits_\Omega  \left[  \partial_{\bm D} \bm S (\bm {Du} + \theta^1 \bm {Dw}) \bm {Dw} - \partial_{\bm D} \bm S (\bm{Du}) \bm{Dw} \right] : \bm{D}(\bm w - \bU), \\
I_{\partial \Omega}^1 &= \int\limits_{\partial \Omega}  \bm s' (\bu) (\bm w - \bU) : (\bm w - \bU), \\ 
I_{\partial \Omega}^2 &= \int\limits_{\partial \Omega}  \left[  \bm s' (\bu + \theta^2 \bm w) \bm w - \bm s' (\bu) \bm w \right] : (\bm w - \bU).
\end{align*}
Because of \eqref{eq:StressDerivative} and \eqref{eq:DerivativeBoundary} we can estimate both $I_\Omega^1$ and $I_{\partial \Omega}^1$ as follows
$$ I_\Omega^1 + I_{\partial \Omega}^1 \geq c \int\limits_\Omega |\bm D (\bm w - \bU)|^2 + c \int\limits_{\partial \Omega} |\bm w - \bU|^2 \geq c || \bm w - \bU ||_{1,2}^2 ,  $$
where we also used Korn's inequality. Recall that derivatives of $\bm S$, $\bm s$ are actually Lipschitz, we can thus estimate the remaining two integrals in the following way
\begin{align*}
I_\Omega^2 
&
\leq \int\limits_\Omega |\bm {Dw} |^2 |\bm D (\bm w - \bU)| 
\leq c \int\limits_\Omega |\bm {Dw} |^4 + \varepsilon \int\limits_\Omega |\bm D (\bm w - \bU)|^2, \\
I_{\partial \Omega}^2 
& 
\leq \int\limits_{\partial \Omega} |\bm w |^2 |\bm w - \bU|  
\leq c \int\limits_{\partial \Omega} |\bm w |^4 + \varepsilon \int\limits_{\partial \Omega} |\bm w - \bU|^2 .
\end{align*}

Let us now rewrite the integral coming from the convective terms 
\begin{align*}
J 
&= \int\limits_\Omega  \left[ (\bu \cdot \nabla ) \bu -  (\bv \cdot \nabla ) \bv +  (\bU \cdot \nabla ) \bu + (\bu \cdot \nabla ) \bU  \right] \cdot (\bm w - \bU)  \\
&= \int\limits_\Omega  \left[ -(\bu \cdot \nabla ) \bm w + ( \bu \cdot \nabla ) \bU  -  ( \bm w \cdot \nabla ) \bv + ( \bU \cdot \nabla ) \bu  \right] \cdot (\bm w - \bU) \\
&= \int\limits_\Omega  \left[-(\bu \cdot \nabla) (\bm w - \bU)   -  (\bm w \cdot \nabla ) \bv + (\bU \cdot \nabla ) \bu  \right] \cdot (\bm w - \bU) \\
&= \int\limits_\Omega  \left[ -  (\bm w \cdot \nabla ) \bv + (\bU \cdot \nabla ) \bu  \right] \cdot (\bm w - \bU) \pm \int\limits_\Omega (\bm w \cdot \nabla ) \bu \cdot (\bm w - \bU) \\
&= \int\limits_\Omega  \left[  - (\bm w \cdot \nabla ) \bm w -  (\bm w \cdot \nabla ) \bu + (\bU \cdot \nabla ) \bu  \right] \cdot (\bm w - \bU) \\
&= -\int\limits_\Omega    (\bm w \cdot \nabla ) \bm w  \cdot (\bm w - \bU) - \int\limits_\Omega [ (\bm w -\bU) \cdot \nabla ] \bu   \cdot (\bm w - \bU) ,
\end{align*}
where from the first to second line we added $\pm \int\limits_\Omega \bu \nabla \bv \cdot (\bm w - \bU)$, from the third to fourth line the first term vanishes due to $\text{div}\, (\bm w - \bU) = 0$. Now, in the first integral, we use per partes and then Young's inequality gives us that
\begin{align*}
J &\leq \int\limits_\Omega  |\bm w|^2 | \nabla  (\bm w - \bU)| + \int\limits_\Omega |\bm w -\bU|^2 |\nabla \bu| \\
&\leq \varepsilon \int\limits_\Omega  | \nabla  (\bm w - \bU)|^2 + c\int\limits_\Omega  |\bm w|^4 + c\int\limits_\Omega |\bm w -\bU|^2 .
\end{align*}
Let us remark that here we have also used $\nabla \bu \in L^\infty (0, T; W^{1, 2}(\Omega))$.

Now, \eqref{eq:Rovnost}, together with the previous estimates, gives us the inequality
\begin{align*}
& \frac{1}{2} \cdot \frac{{\rm d}}{{\rm d}t} || \bm w - \bU||_H^2  + c || \bm w - \bU ||_{1,2}^2 \\
& \quad \leq  C \left( || \bm w ||_{L^4{(\Omega)}}^4 +  || \bm {Dw} ||_{L^4{(\Omega)}}^4 + || \bm w ||_{L^4{(\partial \Omega)}}^4  \right) +  C  \int\limits_\Omega |\bm w -\bU|^2     
\end{align*}
and due to Grönwall's inequality we obtain
$$  || (\bm w - \bU)(t)||_H^2   \leq 
 C e^{ct} \int\limits_0^t \left( || \bm w ||_{L^4{(\Omega)}}^4 +  || \bm {Dw} ||_{L^4{(\Omega)}}^4 + || \bm w ||_{L^4{(\partial \Omega)}}^4  \right)  . $$

In order to show \eqref{eq:QuasiDiff} we need to get
$$  \int\limits_0^t || \bm w ||_{L^4{(\Omega)}}^4 +  \int\limits_0^t|| \bm {Dw} ||_{L^4{(\Omega)}}^4 + \int\limits_0^t|| \bm w ||_{L^4{(\partial \Omega)}}^4 \leq C || \bm w_0||_H^{2+\delta}  $$
for some $\delta > 0$. Let us estimate integrals one by one. For the first one we have
\begin{align*}
 \int\limits_0^t ||\bm w||_4^4 \leq  \int\limits_0^t ||\bm w||_2^2 ||\bm w||_{1,2}^2 \leq C ||\bm w_0||_2^2  \int\limits_0^t  ||\bm w||_{1,2}^2 \leq C ||\bm w_0||_2^4 ,
\end{align*}
where we used interpolation \eqref{eq:Interpolation2} and estimates \eqref{eq:ControlOfDifference1}, \eqref{eq:ControlOfDifference2}. The next one is estimated as follows
\begin{align*}
\int\limits_0^t|| \bm {Dw} ||_{L^4{(\Omega)}}^4  & \leq \int\limits_0^t  ||\nabla \bm w ||_4^4  \leq   \int\limits_0^t  ||\nabla \bm w ||_2^{2+\alpha} ||\nabla \bm w ||_{1,q}^{2-\alpha} \\
& \leq \sup_{t\in (0, T) }  ||\nabla \bm w ||_{1, q}^{2-\alpha} \cdot \sup_{t\in (0, T) }  ||\nabla \bm w ||_2^{\alpha} \cdot \int\limits_0^t ||\nabla \bm w ||_2^2  \\
& \leq  C ||\bm w_0||_H^2 \cdot \sup_{t\in (0, T) }  ||\nabla \bm w ||_2^{\alpha} \leq C ||\bm w_0||_H^{2+\alpha / 2},
\end{align*}
where we used \eqref{eq:Interpolation3}, the fact $\bm w \in L^\infty (0, T; W^{2,q}(\Omega))$, estimates \eqref{eq:ControlOfDifference2} and the last inequality is due to the following estimate
\begin{align*}
\sup_{t\in (0, T) }  ||\nabla \bm w ||_2^{\alpha} & \leq \sup_{t\in (0, T) } \left(c || \bm w ||_2^{\alpha / 2} \cdot || \bm w ||_{2,2}^{ \alpha / 2} \right)  \\
& \leq C \cdot \sup_{t\in (0, T) }  || \bm w ||_2^{ \alpha / 2} \leq C ||\bm w_0||_H^{\alpha / 2}, 
\end{align*}
where \eqref{eq:Interpolation4}, $\bm w \in L^\infty (0, T; W^{2,2}(\Omega))$ and \eqref{eq:ControlOfDifference1} were needed. Concerning the last term we have
\begin{align*}
\int\limits_0^t|| \bm w ||_{L^4{(\partial \Omega)}}^4  & \leq   c\int\limits_0^t  || \bm w||_{1,4}^4 \leq C\int\limits_0^t \left( || \bm {Dv}||_4^4 + ||\text{tr}\, \bm w||_{L^2(\partial \Omega)}^4 \right) \\
& \leq  C ||\bm w_0||_H^{2+\alpha/2} + C \int\limits_0^t  || \bm w||_H^4 \leq  C ||\bm w_0||_H^{2+\alpha/2} + C ||\bm w_0||_H^{4} \\
& \leq  C ||\bm w_0||_H^{2+\alpha/2} ,
\end{align*}
where we used trace and Korn's inequalities, the previous estimate of the symmetrical gradient, and \eqref{eq:ControlOfDifference1}. By the choice $\delta = \alpha / 2$ we proved the desired estimate and the proof is complete.

\qed \end{proof}

\subsection{Trace estimates}
In view of suitable scaling (see Remark by the end of Appendix), we can assume that 
$\nu = \ell = 1$. In this setting, we have $m$ from \eqref{eq:m} equal to $1$, and therefore
\begin{align}	\label{est-B0}
	B_0 &= \sup_{\bu_0 \in \attr} \norm{\bu_0}{H}{} \le \frac{\cbeta}{\calfa} \norm{\bm F}{H}{},
	\\			\label{est-B1}
	B_1 &= \sup_{\bu_0 \in \attr} \limsup_{t\to\infty}
	\frac1t \int\limits_0^t \norm{\bm{D u}}{\ldvaom}{2} \,{\rm d}\tau  \le  \frac{\cbeta}{\calfa} 
			\norm{\bm F}{H}{2} .
\end{align}
We now need to estimate the $N$-trace of the linearized
equation, uniformly along the solutions on the attractor. More formally, writing the linearized equations \eqref{eq:LinearEquation}--\eqref{eq:LinearInitial} as 
\begin{equation} \label{lin-rce}
	\partial_t \bm U  = L(t,\bu_0) \bm U , 
\end{equation}
where $L(t,\bu_0)$ depends on a solution $\bu=\bu(t)$ with 
$\bu(0) = \bu_0 \in \attr$, we need to estimate
\begin{equation}	\label{N-trace}
	q(N) = \limsup_{t\to+\infty} \sup_{\bu_0 \in \attr}
			\sup_{\famn{\bm \varphi_j}} \frac1t \int_0^t
			\sum_{j=1}^N (L(\tau,\bu_0)\bm \varphi_j,\bm \varphi_j)\, {\rm d}\tau .
\end{equation}
The last supremum is taken over all families of functions
$\famn{\bm \varphi_j} \subset V$, which are orthonormal in $H$.
The quantity $q(N)$ provides an effective way to estimate the
global Lyapunov exponents, and a fortiori, of the attractor dimension, see \cite{Te97}.
In particular, if $q(N)<0$, then $\dimm{H} \attr \leq N$, cf.\  \autoref{thm:Dimension} 
in the Appendix.
\par
It follows that 
\begin{equation*}
- (L(\cdot,\bu_0)\bm \varphi_j,\bm \varphi_j) =
   \norm{\bm{D \varphi}_j}{\ldvaom}{2} 
+ \alpha \norm{\bm \varphi_j}{\ldvage}{2} 
- \intom (\bm \varphi_j \cdot \nabla) \bu \cdot \bm \varphi_j -
		 (\bu \cdot \nabla )\bm \varphi_j \cdot \bm \varphi_j
\end{equation*}
and thus
\begin{equation*}	
 \sum_{j=1}^N  (L(\cdot,\bu_0)\bm \varphi_j, \bm \varphi_j)
 \le - \calfa \sum_{j=1}^N \norm{\bm \varphi_j}{W^{1,2}(\Omega)}{2}
		+ \norm{\bm {Du}}{\ldvaom}{} \norm{\bm \rho}{\ldvaom}{}.
\end{equation*}
where $\bm \rho(x) = \sum_{j=1}^N |\bm \varphi_j(x)|^2$.
Invoking now \autoref{thm:LT2} below - recall that $\Omega$ 
has unit diameter, and  $\famn{\bm \varphi_j} $ are orthonormal in $H$,
hence suborthonormal in $L^2(\Omega)$ 
- we can estimate the second term as
\begin{align*}
	\norm{\bm{Du}}{\ldvaom}{} \norm{\bm \rho}{\ldvaom}{}
	&\le \frac{\calfa}{2\kappa} \norm{\bm \rho}{\ldvaom}{2}
		+ \frac{\kappa}{2\calfa}  \norm{\bm{Du}}{\ldvaom}{2}
\\
	&\le \frac{\calfa}{2}  \sum_{j=1}^N \norm{\bm \varphi_j}{W^{1,2}(\Omega)}{2}
		+ \frac{\kappa}{2\calfa}  \norm{\bm{Du}}{\ldvaom}{2} .
\end{align*}
This eventually yields
\begin{equation*}	
\sum_{j=1}^N  (L(\cdot,\bu_0)\bm \varphi_j,\bm \varphi_j)
	\le - \calfa  \sum_{j=1}^{N}  \norm{\bm \varphi_j}{W^{1,2}(\Omega)}{2}
		+ \calfa^{-1} \norm{\bm{Du}}{\ldvaom}{2} .
\end{equation*}
Also, by the min-max principle
\begin{equation*}
	\sum_{j=1}^{N}  \norm{\bm \varphi_j}{W^{1,2}(\Omega)}{2}
	\ge \sum_{j=1}^N \mu_j  \ge \cbeta^{-1} N^2 .
\end{equation*}
Here $\mu_j$ are eigenvalues of the corresponding Stokes
operator, see \autoref{thm:Basis}. The last inequality follows by the asymptotic
estimate $\mu_j \sim j$, see \autoref{thm:EA2d} below.
\par
Combining all the above with \eqref{est-B1}, we see that
\begin{equation*}
q(N) \le - \frac{\calfa}{\cbeta} N^2 + \calfa^{-1} B_1 \le 
		- \frac{\calfa}{\cbeta}  N^2 + \frac{\cbeta}{\calfa^2} \norm{\bm F}{H}{2}
\end{equation*}
and consequently, by \autoref{thm:Dimension} below, we obtain the desired estimate
\begin{equation}	\label{EST-1bd}
\dimm{H} \attr \le c_0 \frac{ \cbeta } {\calfa^{3/2}} \norm{\bm F}{H}{},
\end{equation}
where $c_0$ is some scale-invariant constant that only depends 
on the shape of $\Omega$. 

\subsection{Final evaluation of attractor dimension}
Recall that \eqref{EST-1bd} was actually obtained in terms of the
rescaled variables \eqref{backsc}, i.e., it should be written as
\begin{equation*}	
\dimm{\tilde H} \tilde{\attr} \le c_0 
        \frac{ M_{\tilde \beta}  } { {m_{\tilde \alpha}}^{3/2}} \norm{\tilde {\bm F}}{\tilde H}{},
\end{equation*}
But the rescaling does not affect attractor dimension. Observing also that
$\norm{\tilde {\bm F}}{\tilde H}{} = \ell^2 \nu^{-2} \norm{\bm F}{H}{}$, we
eventually come to 
\begin{equation}	\label{EST-final}
\dimm{H} {\attr} \le c_0 
        \frac{ \cbeta } { \calfa^{3/2}} \cdot \frac{\ell^2 \norm{\bm F}{H}{}}{\nu^2} \,,
\end{equation}
where (see \eqref{malfa}, \eqref{Mbeta} above)
\[
	\calfa = \min\{ 1 , \alpha \ell / \nu \}, 
	\qquad
	\cbeta = \max\{ 1 , \beta/\ell \} \,.
\]
%%%%% ZDE JSEM DISKUSI USEKL
Note these quantities are non-dimensional, as is the last term, which 
corresponds to the so-called Grashof number $G=|\Omega|\nu^{-2}\norm{\bm F}{H}{}$.
Hence, assuming that $\ell > \max\{ \beta, \nu/\alpha \}$, we recover the well-known estimate
$\dimm{L^2}{\attr} \le c_0 G$  for the Dirichlet boundary condition as a special (limiting) case.
%We can now distinguish several regimes:
%\begin{enumerate}
%\item If $\nu/\alpha < \ell < \beta$, 
%then $\calfa=1$, $\cbeta = \beta / \ell$, and
%\begin{equation*}
	%\dimm{H} \attr \le c_0 \frac{\beta \ell^2}{\nu^2} \norm{\bm F}{H}{} .
%\end{equation*}
%\item On the other hand, if $\ell$ is large so that
%$\ell > \max\{ \beta , \nu/\alpha \}$, we have $\calfa=\cbeta=1$
%and hence
%\begin{equation*}
	%\dimm{H} \attr \le c_0 \frac{\ell^3}{\nu^2} \norm{\bm F}{H}{} .
%\end{equation*}
%\item In case of $\ell$ small, i.e. $\ell < \min\{ \beta, \nu/\alpha \}$
%one gets
%\begin{equation*}
	%\dimm{H} \attr \le c_0 \frac{\beta \ell^{1/2}}{\alpha^{3/2} \nu^{1/2}}
		%\norm{\bm F}{H}{} .
%\end{equation*}
%\item Finally, $\beta < \ell < \nu/\alpha$ implies $\calfa=\alpha \ell / \nu$,
%$\cbeta=1$ and so
%\begin{equation*}
	%\dimm{H} \attr \le c_s \frac{\ell^{3/2}}{\alpha^{3/2} \nu^{1/2}}
	        %\norm{\bm F}{H}{} .
%\end{equation*}
%\end{enumerate}

%This is indeed analogous to the best available estimate
%for a 2D NSEs with Dirichlet boundary conditions, 
%see \cite{Te97}, where one has
%\[
	%\dimm{H} \attr \le c_0 \frac{|\Omega|}{\nu^2} 
		%\norm{\bm{f}}{2}{}.
%\]

\section{Appendix}

Here, for the reader's convenience, we collect some more or less well-known results. We start with a standard Sobolev embedding, a certain version of Korn's inequality, and some interpolations.

\begin{proposition}[Sobolev embedding]
Let $M$ be either Lipschitz $\Omega \subset \mathbb{R}^2$ or its boundary $\partial \Omega$. The space $W^{k, p}(M)$ is then continuously embedded into $W^{m, q}(M)$, provided 
$$ k \geq m, \, k - \frac{d}{p} \geq m - \frac{d}{q} \, , $$
where either $d=2$ if $M = \Omega$ or $d=1$ if  $M = \partial \Omega$.
\end{proposition}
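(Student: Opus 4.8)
The plan is to reduce everything to the classical Sobolev embedding on Euclidean space. First observe that the case $q\le p$ is immediate: since $k\ge m$ one has $W^{k,p}(M)\hookrightarrow W^{m,p}(M)$, and as $M$ has finite measure Hölder's inequality gives $W^{m,p}(M)\hookrightarrow W^{m,q}(M)$ for $q\le p$; note moreover that for $q\le p$ the hypothesis $k-d/p\ge m-d/q$ is automatically implied by $k\ge m$. So we may assume $q>p$. Put $j:=k-m\ge 0$. For any multi-index $\gamma$ with $|\gamma|\le m$ one has $\partial^{\gamma}u\in W^{k-|\gamma|,p}(M)\hookrightarrow W^{j,p}(M)$, hence $\|u\|_{W^{m,q}(M)}\le C\sum_{|\gamma|\le m}\|\partial^{\gamma}u\|_{L^{q}(M)}$, and the whole statement reduces to the single embedding $W^{j,p}(M)\hookrightarrow L^{q}(M)$ valid whenever $1/q\ge 1/p-j/d$. (Non-integer orders $k,m$ are then recovered from the integer case by real interpolation, which I would not dwell on.)

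\emph{Case $M=\Omega$.} Since $\Omega\subset\rr^{2}$ is bounded and Lipschitz, it admits a bounded linear extension operator $E:W^{j,p}(\Omega)\to W^{j,p}(\rr^{2})$ (a Calderón--Stein type extension), so it suffices to prove $W^{j,p}(\rr^{2})\hookrightarrow L^{q}(\rr^{2})$ for $q\ge p$ with $1/q\ge 1/p-j/2$ and then restrict $Eu$ back to $\Omega$. If $jp<2$, iterating the Gagliardo--Nirenberg--Sobolev inequality $\|v\|_{L^{p^{*}}(\rr^{2})}\le C\|\nabla v\|_{L^{p}(\rr^{2})}$, with $1/p^{*}=1/p-1/2$, successively on $u,\nabla u,\dots,\nabla^{j-1}u$, yields $W^{j,p}(\rr^{2})\hookrightarrow L^{q_{0}}(\rr^{2})$ with $1/q_{0}=1/p-j/2$; interpolating with $L^{p}(\rr^{2})$ then covers every $q\in[p,q_{0}]$. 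If $jp=2$ one gets $W^{j,p}(\rr^{2})\hookrightarrow L^{q}(\rr^{2})$ for every finite $q\ge p$, and if $jp>2$ Morrey's inequality gives $W^{j,p}(\rr^{2})\hookrightarrow L^{\infty}(\rr^{2})\cap C^{0,\mu}(\rr^{2})$, hence into every $L^{q}(\rr^{2})$ with $q\ge p$. This exhausts the admissible range.

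\emph{Case $M=\Bdry$, and the main point.} The boundary of a bounded Lipschitz planar domain is a compact one-dimensional Lipschitz manifold. Covering $\Bdry$ by finitely many charts in which it is the graph of a Lipschitz function, choosing a subordinate partition of unity $\{\eta_{i}\}$, and transporting each $\eta_{i}u$ by the (bi-Lipschitz) chart map to a compactly supported function on an interval $I\subset\rr$ --- such maps preserve the $W^{k,p}$ scale --- reduces matters to $W^{j,p}(I)\hookrightarrow L^{q}(I)$ for $1/q\ge 1/p-j$, which is the one-dimensional instance handled exactly as above (now with threshold $jp=1$, the supercritical case $jp>1$ giving even Hölder continuity via the fundamental theorem of calculus). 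Summing over $i$ gives the global bound $\|u\|_{W^{m,q}(\Bdry)}\le C\|u\|_{W^{k,p}(\Bdry)}$. I expect no deep obstacle here, as the result is classical; the only places where the Lipschitz hypothesis is genuinely used are the extension operator for $\Omega$ and the Lipschitz-graph description of $\Bdry$. One caveat worth flagging: the inequality $k-d/p\ge m-d/q$ must be read as strict in the endpoint $jp=d$, $q=\infty$ (for instance $W^{1,2}(\Omega)\not\hookrightarrow L^{\infty}(\Omega)$ for $\Omega\subset\rr^{2}$), a borderline that is never actually invoked in this paper.
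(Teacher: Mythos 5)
Your argument is correct in substance, but note that the paper does not prove this proposition at all: it is stated in the Appendix purely as classical background (alongside the trace and Korn results, which are cited), so there is no internal proof to compare against. What you write is the standard route one would cite: Hölder on the finite-measure set for $q\le p$, reduction to $W^{j,p}\hookrightarrow L^q$ with $j=k-m$, a Stein-type extension from the Lipschitz domain to $\rr^2$ followed by iterated Gagliardo--Nirenberg--Sobolev, the critical case $jp=d$, and Morrey for $jp>d$; your flag that the borderline $q=\infty$ with $k-m=d/p$ must be excluded is also the right caveat and is never needed in the paper. The one point to tighten is the boundary case: a bi-Lipschitz chart preserves $W^{s,p}$ only for $0\le s\le 1$ (and, by duality/interpolation, $|s|\le 1$), not the full scale, so for a merely Lipschitz $\Bdry$ the spaces $W^{k,p}(\Bdry)$ with $k\ge 2$ are not even chart-independent and your sentence ``such maps preserve the $W^{k,p}$ scale'' is false as stated for $k\ge2$. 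This does not affect the paper, which only ever uses boundary spaces of order in $[-1,1]$ (e.g.\ $W^{\pm 1/p,p}(\Bdry)$, $W^{1/2,2}(\Bdry)$) and assumes $\Omega\in\mathcal{C}^{1,1}$ where boundary regularity matters, but you should either restrict the boundary statement to $k,m\in[0,1]$ (extending to fractional orders by interpolation, as you indicate) or assume enough boundary smoothness for the higher-order spaces to make sense.
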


\begin{proposition}[Sobolev traces]
Let $\Omega$ be a bounded Lipschitz domain. Then the range of the trace operator
is characterized by the equality
\newline
$\trace( W^{1,p}(\Omega) ) = W^{1-1/p,p}(\Bdry)$,
where $W^{1-1/p,p}(\Bdry)$ is the Sobolev-Slobodecki space, defined
via the norm
\[
	\left(
	\norm{ u }{L^p(\Bdry)}{p}
	+ \int_{\Bdry \times \Bdry} \frac{|u(x)-u(y)|^p}{|x-y|^{p+n-2}}\,{\rm d}x \, {\rm d}y
	\right)^{1/p}.
\]
\end{proposition}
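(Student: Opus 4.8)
The plan is to prove the trace characterization by the classical localization-and-flattening argument, reducing everything to a model computation on a half-space. First I would fix a finite atlas of the Lipschitz boundary $\Bdry$ with a subordinate partition of unity $\{\zeta_k\}$, so that in each chart $\Omega$ becomes, after a bi-Lipschitz change of variables, the region lying above the graph of a Lipschitz function; writing $u=\sum_k \zeta_k u$ it then suffices to treat a single function supported in one chart. This reduces the claim to two facts about the half-space $\rr^n_+=\{x_n>0\}$: first, the trace map $\trace\colon W^{1,p}(\rr^n_+)\to W^{1-1/p,p}(\rr^{n-1})$ is bounded; second, it admits a bounded right inverse $E\colon W^{1-1/p,p}(\rr^{n-1})\to W^{1,p}(\rr^n_+)$. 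Given these, the range equality follows: patching the local extensions back via the partition of unity and the inverse charts yields a bounded extension operator on $\Omega$, while the inclusion $\trace(W^{1,p}(\Omega))\subset W^{1-1/p,p}(\Bdry)$ is immediate from the first fact together with the chain rule for bi-Lipschitz maps.

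For the boundedness estimate I would work with $u$ smooth (dense in $W^{1,p}(\rr^n_+)$) and write $x=(x',x_n)$. The bound $\|u(\cdot,0)\|_{L^p(\rr^{n-1})}\le C\|u\|_{W^{1,p}(\rr^n_+)}$ follows from $|u(x',0)|^p\le C\int_0^1(|u(x',t)|^p+|\partial_n u(x',t)|^p)\,{\rm d}t$ and integration in $x'$. For the Gagliardo seminorm, given $x',y'\in\rr^{n-1}$ with $h=|x'-y'|$, one writes $u(x',0)-u(y',0)$ as $\big(u(x',0)-u(z,h)\big)+\big(u(z,h)-u(y',0)\big)$ with $z$ an intermediate point, bounds each difference by a line integral of $\nabla u$ over a segment of length comparable to $h$, raises to the $p$-th power and integrates; a change of variables plus Fubini then turn the double boundary integral into $\|\nabla u\|_{L^p(\rr^n_+)}^p$. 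This is the standard Gagliardo computation and I expect it to be routine.

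For the extension I would use Gagliardo's averaging operator: given $g\in W^{1-1/p,p}(\rr^{n-1})$, set
\[
	Eg(x',x_n)=\int_{\rr^{n-1}}\varphi(y')\,g(x'+x_n y')\,{\rm d}y'
\]
for a fixed $\varphi\in C_0^\infty(\rr^{n-1})$ with $\int\varphi=1$ (the Poisson extension is an equally good choice). One checks $Eg(\cdot,0)=g$, and then that $\|Eg\|_{L^p(\rr^n_+)}$ and $\|\nabla Eg\|_{L^p(\rr^n_+)}$ are controlled by $\|g\|_{L^p(\rr^{n-1})}$ and the fractional seminorm; the gradient bound is the place where the seminorm enters, via the formula $\partial_j Eg(x)=\int\varphi(y')\,(\partial_j g)(x'+x_n y')\,{\rm d}y'$ for tangential $j$ and a difference-quotient rewriting of $\partial_n Eg$. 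Again these are classical estimates.

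The only genuinely non-routine point, and hence the main obstacle, is making the localization rigorous for a merely Lipschitz domain: one must check that the Sobolev--Slobodeckij space $W^{1-1/p,p}$ on $\Bdry$ is invariant, with equivalent norms, under the bi-Lipschitz chart maps --- so that the intrinsic seminorm on $\Bdry$ matches the one transported from $\rr^{n-1}$ --- and likewise that $W^{1,p}(\Omega)$ is preserved under the flattening maps (which holds because bi-Lipschitz maps preserve $W^{1,p}$ by the chain rule and Rademacher's theorem). Once this invariance is available, the finiteness of the atlas, the partition-of-unity decomposition, and the boundedness of multiplication by smooth cut-offs on both $W^{1,p}(\Omega)$ and $W^{1-1/p,p}(\Bdry)$ close the argument; for complete details one may consult Ne\v{c}as or Grisvard.
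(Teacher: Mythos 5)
The paper itself offers no proof of this proposition: it is collected in the Appendix as classical background (the trace theorem of Gagliardo, as found in Ne\v{c}as or Grisvard), so there is no ``paper route'' to compare against; your proposal reconstructs the standard argument, and it is essentially correct. The structure you give --- finite Lipschitz atlas, partition of unity, bi-Lipschitz flattening, boundedness of $\trace\colon W^{1,p}(\rr^n_+)\to W^{1-1/p,p}(\rr^{n-1})$ via the Gagliardo double-integral computation, and Gagliardo's averaging operator as a bounded right inverse, together with the bi-Lipschitz invariance of both $W^{1,p}$ and the Sobolev--Slobodeckij seminorm --- is exactly how the cited references prove the result, and you correctly identify the chart-invariance of the fractional seminorm as the point that makes the localization legitimate on a merely Lipschitz boundary. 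Two small imprecisions you should repair when writing this out: in the seminorm estimate one cannot take a single intermediate point $z$ but must average $z$ over a set of diameter comparable to $h=|x'-y'|$ (a cone or ball) before applying Fubini, and, more substantively, the formula $\partial_j Eg(x)=\int\varphi(y')\,(\partial_j g)(x'+x_n y')\,{\rm d}y'$ is not available, since $g\in W^{1-1/p,p}(\rr^{n-1})$ has no weak gradient in $L^p$; instead one differentiates the rescaled kernel $x_n^{-(n-1)}\varphi((z'-x')/x_n)$ and uses $\int\partial_j\varphi=0$ (respectively the normalization $\int\varphi=1$ for the $x_n$-derivative) to insert the difference $g(z')-g(x')$, so that the Gagliardo seminorm controls \emph{all} first derivatives of $Eg$, tangential ones included. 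With these standard adjustments the argument closes as you describe.
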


\begin{proposition}[Korn's inequality]\label{thm:Korn}
Let $\Omega$ be a bounded Lipschitz domain and $r\in (1, \infty)$. Then there exists a constant $C > 0$, depending only on $\Omega$ and $r$, such that for all $\bu \in W^{1,r}(\Omega)$ that has $\textup{tr}\, \bu \in L^2(\partial \Omega)$, the following inequalities hold
$$ ||\bu ||_{1,r} \leq 
\begin{cases} 
C ( || \bm{Du} ||_r + ||\textup{tr}\,\bu ||_{L^2(\partial \Omega)})  \\
C ( || \bm{Du} ||_r + ||\bu ||_{L^2( \Omega)}) \end{cases} .$$
\end{proposition}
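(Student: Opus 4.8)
The plan is to prove both inequalities together by a compactness--contradiction argument, taking as the only external input the classical \emph{second Korn inequality}
$\norm{\bu}{W^{1,r}(\Omega)}{} \le C_0\big( \norm{\bm{Du}}{L^r(\Omega)}{} + \norm{\bu}{L^r(\Omega)}{} \big)$,
valid on any bounded Lipschitz domain for $r \in (1,\infty)$, together with the Rellich--Kondrachov compact embedding $W^{1,r}(\Omega) \hookrightarrow\hookrightarrow L^r(\Omega)$ and the classical fact that the kernel of the symmetric gradient on a (connected) domain is the finite-dimensional space $\mathcal R$ of infinitesimal rigid displacements $\bm a + \bm B \bm x$, $\bm B$ skew-symmetric. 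Note that in $\mathbb R^2$ such a field, if not identically zero, vanishes on at most one point.

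For the \emph{first} (boundary) inequality, I would work on the Banach space $X = \{ \bu \in W^{1,r}(\Omega) : \trace \bu \in L^2(\Bdry) \}$ equipped with the norm $\norm{\bu}{W^{1,r}(\Omega)}{} + \norm{\trace \bu}{L^2(\Bdry)}{}$; this is the natural setting, since for $r$ close to $1$ the operator $\trace$ need not map $W^{1,r}(\Omega)$ into $L^2(\Bdry)$ at all. Suppose the asserted bound fails; then there are $\bu_k \in X$ with $\norm{\bu_k}{1,r}{} = 1$ and $\norm{\bm{Du}_k}{L^r(\Omega)}{} + \norm{\trace \bu_k}{L^2(\Bdry)}{} \to 0$. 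By the compact embedding a subsequence converges in $L^r(\Omega)$, and applying the classical Korn inequality to the differences $\bu_k - \bu_l$ shows this subsequence is Cauchy, hence convergent, in $W^{1,r}(\Omega)$, say $\bu_k \to \bu$, with $\bm{Du} = 0$; thus $\bu \in \mathcal R$ and $\norm{\bu}{1,r}{} = 1$, in particular $\bu \neq 0$. Since $\trace : W^{1,r}(\Omega) \to W^{1-1/r,r}(\Bdry) \hookrightarrow L^1(\Bdry)$ is bounded, $\trace \bu_k \to \trace \bu$ in $L^1(\Bdry)$; combined with $\trace \bu_k \to 0$ in $L^2(\Bdry) \hookrightarrow L^1(\Bdry)$ this gives $\trace \bu = 0$ a.e.\ on $\Bdry$. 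But $\bu$ is an affine vector field, so $\trace \bu$ is just its pointwise restriction to $\Bdry$, and $\Bdry$ has positive one-dimensional Hausdorff measure; a non-zero rigid displacement cannot vanish on such a set, so $\bu \equiv 0$ --- a contradiction.

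The \emph{second} inequality follows from exactly the same scheme, with $\norm{\trace \bu_k}{L^2(\Bdry)}{}$ replaced by $\norm{\bu_k}{L^2(\Omega)}{}$ and $X$ replaced by $W^{1,r}(\Omega)$: one again extracts $\bu_k \to \bu$ in $W^{1,r}(\Omega)$ with $\bu \in \mathcal R$ and $\norm{\bu}{1,r}{} = 1$, while $\bu_k \to 0$ in $L^2(\Omega)$ and $\bu_k \to \bu$ in $L^r(\Omega)$ force, via an a.e.-convergent subsequence, $\bu = 0$ a.e.\ on $\Omega$, hence $\bu \equiv 0$, a contradiction. In both cases the resulting constant depends only on $\Omega$ and $r$, since the compactness argument is insensitive to the particular sequence.

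I expect the only genuinely delicate point to be the bookkeeping around the boundary term for small $r$: one must not work in $W^{1,r}(\Omega)$ but in the smaller space $X$ on which $\trace$ lands in $L^2(\Bdry)$, and one must extract convergence of the traces only in $L^1(\Bdry)$ --- the topology into which both $W^{1-1/r,r}(\Bdry)$ and $L^2(\Bdry)$ embed. Once the limiting trace is identified with the pointwise restriction of a polynomial, the rigidity argument closes the gap; the rest is routine.
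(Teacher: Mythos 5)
Your argument is correct. Note, however, that the paper does not prove this proposition at all: it simply cites Lemma 1.11 of \cite{BMR2007}, so your compactness proof is a genuinely different (self-contained) route rather than a reconstruction of the paper's argument. What your approach buys is transparency: starting from the classical second Korn inequality $\lVert \bu \rVert_{1,r} \le C(\lVert \bm{Du}\rVert_r + \lVert \bu \rVert_r)$, Rellich compactness, and the rigidity of the kernel of $\bm D$, both variants follow by the standard contradiction scheme, and you handle the genuinely delicate points correctly: working in the space $X$ where $\trace\bu \in L^2(\Bdry)$ (needed since for $r$ close to $1$ the trace of a $W^{1,r}$ function need not be square integrable), identifying the limiting trace in the common topology $L^1(\Bdry)$, and using that a non-zero planar rigid displacement vanishes at most at a single point while $\Bdry$ has positive $\mathcal H^1$ measure. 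What it costs is that the second Korn inequality is itself a non-elementary input of essentially the same depth as the target statement, so the proof is only as self-contained as that black box; also, the characterization of $\ker \bm D$ as rigid displacements requires $\Omega$ connected (as you note), and for a disconnected Lipschitz domain one should argue component by component, each component contributing a boundary piece of positive measure. With these caveats made explicit, the argument is complete and yields a constant depending only on $\Omega$ and $r$, as required.
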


\begin{proof}
See Lemma 1.11 in \cite{BMR2007}.
\qed \end{proof}

\begin{proposition}[Interpolations]
Let $\Omega$ be a bounded Lipschitz domain in $\mathbb{R}^2$ and $q > 2$. Then there hold the following inequalities
\begin{align}
% || \bu ||_{2r'} & \leq c || \bu ||_2^{\frac{r-1}{r}} || \bu ||_{1,2}^{\frac{1}{r}}\, , \label{eq:Interpolation1} \\
|| \bu ||_4^4 & \leq c || \bu ||_2^2 || \bu ||_{1,2}^2 , \label{eq:Interpolation2} \\
|| \bu ||_4^4 & \leq c || \bu ||_2^{2+\gamma} || \bu ||_{1,q}^{2-\gamma}   \text{ with } \gamma = \frac{4(1-s)}{q}\, ,\, s \in \left(\frac{2}{q},1 \right), \label{eq:Interpolation3} \\
|| \nabla \bu ||_2^2 & \leq c || \bu ||_2 || \bu ||_{2,2}\, . \label{eq:Interpolation4} 
\end{align}
\end{proposition}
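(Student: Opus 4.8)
The plan is to recognize all three estimates as instances of Gagliardo--Nirenberg-type interpolation on the bounded Lipschitz domain $\Omega\subset\rr^2$ and, in each case, to reduce to a whole-plane statement by means of a bounded extension operator. Two of the three are entirely routine; only \eqref{eq:Interpolation3} calls for some care with exponents.

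For \eqref{eq:Interpolation2} — the two-dimensional Ladyzhenskaya inequality — I would extend $\bu$ to a function $v\in W^{1,2}(\rr^2)$ with $\|v\|_{W^{1,2}(\rr^2)}\le c\|\bu\|_{W^{1,2}(\Omega)}$, establish $\|v\|_{L^4(\rr^2)}^4\le c\|v\|_{L^2(\rr^2)}^2\|\nabla v\|_{L^2(\rr^2)}^2$ by applying the one-dimensional bound $\|f\|_{L^\infty(\rr)}^2\le 2\|f\|_{L^2(\rr)}\|f'\|_{L^2(\rr)}$ in each coordinate and using Fubini, and then restrict to $\Omega$; alternatively one just quotes the Gagliardo--Nirenberg inequality $\|\bu\|_{L^4(\Omega)}\le c\|\bu\|_{L^2(\Omega)}^{1/2}\|\bu\|_{W^{1,2}(\Omega)}^{1/2}$ and raises it to the fourth power. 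For \eqref{eq:Interpolation4} I would start from integration by parts, $\int_\Omega|\nabla\bu|^2=-\int_\Omega\bu\cdot\Delta\bu\le\|\bu\|_{L^2}\|\Delta\bu\|_{L^2}\le\|\bu\|_{L^2}\|\bu\|_{W^{2,2}}$, valid first for $\bu\in C_c^\infty(\rr^2)$, and then pass to a general $\bu$ on $\Omega$ by applying this to a bounded extension $E\bu\in W^{2,2}(\rr^2)$ and invoking density.

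The genuine work is in \eqref{eq:Interpolation3}. Here I would first use the elementary bound $\|\bu\|_{L^4(\Omega)}^4=\int_\Omega|\bu|^2|\bu|^2\le\|\bu\|_{L^\infty(\Omega)}^2\,\|\bu\|_{L^2(\Omega)}^2$, and then interpolate the $L^\infty$ factor between $W^{1,q}(\Omega)$ and $L^2(\Omega)$. Since $sq>2$, the fractional Morrey embedding $W^{s,q}(\Omega)\hookrightarrow L^\infty(\Omega)$ holds on the Lipschitz domain $\Omega$; combining it with a Gagliardo--Nirenberg interpolation inequality $\|\bu\|_{W^{s,q}(\Omega)}\le c\,\|\bu\|_{W^{1,q}(\Omega)}^{\theta}\|\bu\|_{L^2(\Omega)}^{1-\theta}$ (equivalently, first interpolating $W^{s,q}$ between $W^{1,q}$ and $L^q$ and then using the Lebesgue interpolation $\|\bu\|_{L^q}\le\|\bu\|_{L^\infty}^{1-2/q}\|\bu\|_{L^2}^{2/q}$ and absorbing) yields $\|\bu\|_{L^\infty(\Omega)}\le c\,\|\bu\|_{W^{1,q}(\Omega)}^{\theta}\|\bu\|_{L^2(\Omega)}^{1-\theta}$, and hence $\|\bu\|_{L^4(\Omega)}^4\le c\,\|\bu\|_{W^{1,q}(\Omega)}^{2\theta}\|\bu\|_{L^2(\Omega)}^{4-2\theta}$. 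One then checks that $2\theta\le 2-\gamma$ with $\gamma=\tfrac{4(1-s)}{q}$ and, since $4-2\theta\ge 2+\gamma\ge 0$ and $\|\bu\|_{L^2(\Omega)}\le c\,\|\bu\|_{W^{1,q}(\Omega)}$ on the bounded domain, trades the surplus power of $\|\bu\|_{L^2(\Omega)}$ for $\|\bu\|_{W^{1,q}(\Omega)}$ to land exactly on \eqref{eq:Interpolation3}. The main (mild) obstacle is precisely this exponent bookkeeping, together with verifying that the restriction $s\in(2/q,1)$ is what makes the fractional embedding applicable and the trading step admissible.
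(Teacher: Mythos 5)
Your proposal is correct and follows essentially the same route as the paper, whose proof merely cites Ladyzhenskaya's inequality and \cite{KaPr} while indicating exactly the ingredients you use for \eqref{eq:Interpolation3}: the bound $\|u\|_4^4\le\|u\|_2^2\|u\|_\infty^2$, the embedding $\|u\|_\infty\le C\|u\|_{s,q}$ (available precisely because $sq>2$), and the interpolation $\|u\|_{s,q}\le c\|u\|_q^{1-s}\|u\|_{1,q}^{s}$ combined with $\|u\|_q\le\|u\|_\infty^{1-2/q}\|u\|_2^{2/q}$. Your extra bookkeeping is sound: after absorbing the $\|u\|_\infty$ factor one gets $\theta=\tfrac{sq}{s(q-2)+2}$, for which $2-\gamma-2\theta\ge0$ (the difference is proportional to $(1-s)^2(q-2)$), so the surplus power of $\|u\|_2$ can indeed be traded for $\|u\|_{1,q}$ on the bounded domain, yielding the stated (slightly weaker) exponent $\gamma=4(1-s)/q$; the extension-operator arguments for \eqref{eq:Interpolation2} and \eqref{eq:Interpolation4} are likewise standard and correct.
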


\begin{proof}
The first one is nothing else than the well-known Ladyzhenskaya's inequality, the other two can be found e.g. in \cite{KaPr}. They are based on the interpolation between $L^2$ and $L^\infty$ and the estimates $|| \bu ||_{\infty} \leq C || \bu ||_{s, q}$, $|| \bu ||_{s, q} \leq c || \bu ||_{q}^{1-s} || \bu ||_{1, q}^s $ for $s\in (0, 1)$.
\qed \end{proof}

Next, to establish an estimate of the dimension of the attractor we use the following result.

\begin{proposition}\label{thm:Dimension}
Let $\mathcal{A}$ be a compact set in a Hilbert space $H$, such that $\attr = S(t)\attr$
for some evolution operators $S(t)$. Let there exist uniform quasidifferentials $DS(t,u_0)$, 
which obey the equation of variations \eqref{lin-rce}, and let the corresponding global
Lyapunov exponents $q(N)$ be defined as in \eqref{N-trace}.
\par
Suppose further that $q(N) \le f(N)$, where $f(N)$ is a concave function, and $f(d)=0$
for some $d>0$. Then $\dimm{H}{\attr} \le d$.
\end{proposition}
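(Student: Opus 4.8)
This is the Constantin--Foias--Douady--Oesterl\'e volume-contraction estimate of the fractal dimension, in the sharpened form using a concave majorant of the trace numbers; the plan is to follow the scheme of \cite{Te97}. The core idea is that $q(N)$ controls the exponential rate at which $N$-dimensional volume elements are contracted by the quasidifferentials $DS(t,\bu_0)$ along trajectories on $\attr$, and that a negative (more generally, an interpolated non-positive) rate forces $\attr$ to be covered by few small balls.

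First I would record the Liouville-type identity: for $\bm\varphi_1,\dots,\bm\varphi_N\in H$ and $\bm U_j(\tau)=DS(\tau,\bu_0)\bm\varphi_j$,
\[
	\bigl\|\bm U_1(t)\wedge\dots\wedge\bm U_N(t)\bigr\|_{\Lambda^N H}
	=\bigl\|\bm\varphi_1\wedge\dots\wedge\bm\varphi_N\bigr\|_{\Lambda^N H}\,
	\exp\int_0^t\trace\bigl(L(\tau,\bu_0)\circ Q_N(\tau)\bigr)\,{\rm d}\tau,
\]
where $Q_N(\tau)$ is the orthogonal projection of $H$ onto $\operatorname{span}\{\bm U_1(\tau),\dots,\bm U_N(\tau)\}$; choosing an $H$-orthonormal basis of its range at each instant bounds the integrand by the (pointwise-in-$\tau$) trace bound underlying \eqref{N-trace}. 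Hence, writing $\bar\omega_N(t):=\sup_{\bu_0\in\attr}\|\Lambda^N DS(t,\bu_0)\|$ for the operator norm on $\Lambda^N H$ (equal to the product of the $N$ largest singular values of $DS(t,\bu_0)$), one gets $\limsup_{t\to\infty}\tfrac1t\log\bar\omega_N(t)\le q(N)$. The cocycle identity $DS(t+\tau,\bu_0)=DS(\tau,S(t)\bu_0)\circ DS(t,\bu_0)$ together with $S(t)\attr=\attr$ yields submultiplicativity $\bar\omega_N(t+\tau)\le\bar\omega_N(t)\,\bar\omega_N(\tau)$, so by Fekete's lemma the limit $\Pi_N:=\lim_{t\to\infty}\tfrac1t\log\bar\omega_N(t)=\inf_{t>0}\tfrac1t\log\bar\omega_N(t)$ exists and $\Pi_N\le q(N)\le f(N)$.

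Next I would pass to non-integer dimensions. Writing $d=n+s$ with $n=\lfloor d\rfloor$, $s\in[0,1)$, and $\omega_d(DS(t,\bu_0)):=\|\Lambda^n DS(t,\bu_0)\|^{1-s}\,\|\Lambda^{n+1}DS(t,\bu_0)\|^{s}$, one has $\sup_{\bu_0\in\attr}\omega_d(DS(t,\bu_0))\le\bar\omega_n(t)^{1-s}\,\bar\omega_{n+1}(t)^{s}$, so that
\[
	\limsup_{t\to\infty}\frac1t\log\sup_{\bu_0\in\attr}\omega_d(DS(t,\bu_0))
	\le(1-s)\,\Pi_n+s\,\Pi_{n+1}\le(1-s)f(n)+s\,f(n+1)\le f(d)=0,
\]
the last inequality being precisely the concavity of $f$ applied at $d=(1-s)n+s(n+1)$ (and trivial when $s=0$, since then $f(n)=f(d)=0$). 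Finally, this non-positive contraction rate of $d$-dimensional volume elements transported by the \emph{uniform} quasidifferentials is fed into the Douady--Oesterl\'e covering argument: since $\attr=S(t_0)\attr$, a cover of $\attr$ by balls $B(\bu_0^i,r)$ gives $\attr\subset\bigcup_i S(t_0)B(\bu_0^i,r)$, and by uniform quasidifferentiability each $S(t_0)B(\bu_0^i,r)$ lies in a slightly enlarged ellipsoid that, by the Douady--Oesterl\'e covering lemma, is re-covered by a number of balls of a suitably smaller radius controlled by $\omega_d(DS(t_0,\bu_0^i))$; iterating along $S(mt_0)\attr=\attr$ and using the estimate above for $t_0$ large shows that $N_\delta^H(\attr)$ grows no faster than $\delta^{-d}$, i.e.\ $\dimm{H}\attr\le d$. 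I expect the genuine technical work to lie in this covering/iteration lemma; the only additional subtlety is the borderline case where the rate equals $0$ rather than being strictly negative, which is absorbed either by the standard refined form of the covering lemma or by replacing $d$ with $d+\varepsilon$ and letting $\varepsilon\to0^+$, whereas the passage from $q(N)$ to $\Pi_N$ and the fractional interpolation are routine once the Liouville formula is in hand.
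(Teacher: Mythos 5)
The paper does not actually prove this proposition: its ``proof'' is the citation of Theorem 2.1 and Corollary 2.2 of \cite{ChepIl01}, so your proposal is in effect a reconstruction of the argument behind that citation. The first half of your sketch is sound and matches that source: the Liouville identity, the resulting bound $\limsup_{t\to\infty}\frac1t\log\bar\omega_N(t)\le q(N)$, submultiplicativity of $\bar\omega_N$ via the cocycle property and the invariance $S(t)\attr=\attr$, and the interpolation $(1-s)f(n)+s f(n+1)\le f\bigl((1-s)n+s(n+1)\bigr)=f(d)=0$ by concavity are all correct and are exactly how the uniform Lyapunov exponents are treated there.

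The genuine gap is in the final step, where you pass from the nonpositive exponential rate of $\omega_d$ to $N^{H}_{\varepsilon}(\attr)\lesssim \varepsilon^{-d}$ ``by the Douady--Oesterl\'e covering lemma, controlled by $\omega_d(DS(t_0,\bu_0^i))$''. That is the \emph{Hausdorff}-dimension argument; for the fractal (box-counting) dimension asserted here it does not suffice. To re-cover the image of a ball $B(\bu_0^i,r)$, which sits inside an ellipsoid with semi-axes $r a_1\ge r a_2\ge\dots$, by balls of radius $\theta r$ one needs on the order of $\prod_{j\le n}(a_j/\theta)$ balls, so the covering multiplicity is governed by \emph{all} lower-order contractions $\omega_j$, $j=1,\dots,n$ (equivalently by $q(1),\dots,q(n)$, which may be positive), not by $\omega_d$ alone. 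The naive iteration therefore only reproduces the weaker classical Constantin--Foias--Temam fractal bounds (with an extra factor involving $\max_j q(j)_+/|q(N)|$), and proving that the concave majorant by itself yields the clean bound $\dimm{H}{\attr}\le d$, including the borderline case where the rate at level $d$ is exactly zero, is precisely the content of the cited Theorem 2.1 in \cite{ChepIl01}. Your $\varepsilon$-shift $d\mapsto d+\varepsilon$ is harmless (concavity, $f(d)=0$ and $f>0$ somewhere below $d$ --- as in the application, where $f(N)=-cN^2+C$ --- force $f(d+\varepsilon)<0$), but the ``covering/iteration lemma'' you defer to is not the standard Douady--Oesterl\'e lemma; it is the substance of the theorem being invoked, so as written the proposal assumes what is to be proved at its decisive point.
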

\begin{proof}
See Theorem 2.1 and Corollary 2.2 in \cite{ChepIl01}.
\qed \end{proof}

%and there exists $DS(t,u_0)$ uniform quasidifferentials
%of $S(t)$. Let $DS(
%for any $u_0 \in \attr$.
%is uniformly quasidifferentiable on $\mathcal{A}$ and $S\mathcal{A} = \mathcal{A}$. Let $d = n + s$ for some $s \in (0, 1]$. Suppose that $D S(\bu)$ is norm-continuous with respect to $\bu \in \mathcal{A}$, i.e.
%$$ ||D S(\bu) - D S(\bu_0) ||_{\mathcal{L}(H, H)} \rightarrow 0 , \text{ as } ||\bu - \bu_0 ||_H  \rightarrow 0, \bu, \bu_0 \in \mathcal{A} . $$
%Suppose further that the quassidifferential $D S(\bu)$ contracts $d$-dimensional volumes uniformly for $\bu \in \mathcal{A}$, i.e.
%$$ \tilde{\omega}_d = \sup_{\bu \in \mathcal{A}} \omega_d (\bu) < 1. $$ 
%Then
%$$ \dimm{H} (\mathcal{A}) \leq d . $$

Further, we recall a generalized version of the celebrated
Lieb-Thirring inequality, following \cite{GMT88}.
A family of functions $\famn{\bm \varphi_j}$
is called suborthonormal in $L^2(\Omega)$, if for all $\famn{\xi_j} 
\subset \rr$ one has
\begin{equation}	\label{subon}
\sum_{i,j=1}^N
\xi_i  \xi_j (\bm \varphi_i,\bm \varphi_j)_{\ldvaom}
		\le \sum_{i=1}^N \xi_i^2 .
\end{equation}
A typical example are functions orthonormal in some larger space,
for example $V$ or $H$. Indeed, assuming that
\[
	(\bm \varphi_i,\bm \varphi_j)_{\ldvaom} + (\bm \varphi_i,\bm \varphi_j)_{\ldvage}
	= \delta_{ij}
\]
we readily obtain

\begin{align*}
\sum_{i,j=1}^N
\xi_i \xi_j (\bm \varphi_i,\bm \varphi_j)_{\ldvaom}
& = \sum_{i,j=1}^N \xi_i \xi_j \big( \delta_{ij} 
- (\bm \varphi_i,\bm \varphi_j)_{\ldvage} \big) \\
& = \sum_{i=1}^N \xi_i^2 - \norm{ \sum_{i=1}^N \xi_i \bm \varphi_i }{\ldvage}{2}
\le \sum_{i=1}^N  \xi_i^2 .
\end{align*}
The following version of the Lieb-Thirring inequality is used above.

\begin{proposition} \label{thm:LT2}
Let $\Omega \subset \rr^2$ be a bounded domain. Let $\famn{\bm \varphi_j} \subset W^{1,2}(\Omega)$ be suborthonormal in $\ldvaom$ and set
\begin{equation}    \label{def-rho}
 \rho(\bm x) = \sum_{j=1}^N |\bm \varphi_j(\bm x)|^2 .
\end{equation}
Then 
\begin{equation*}
\intom \rho^2 \le \kappa  \sum_{j=1}^N \left(
        \norm{\nabla \bm \varphi_j}{\ldvaom}{2}
                + \frac{1}{\diam{\Omega}}\norm{\bm \varphi_j}{\ldvaom}{2}
                \right),
\end{equation*}
where the constant $\kappa$ is independent of $N$.
\end{proposition}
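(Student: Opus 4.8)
The plan is to derive this bounded–domain Lieb–Thirring inequality from the classical one on $\mathbb{R}^2$ by means of an extension operator, which is the strategy of \cite{GMT88}. First I would reduce to the case $\diam\Omega=1$: under a dilation $\bm x\mapsto\lambda\bm x$ together with the $L^2(\Omega)$–preserving rescaling $\bm\varphi_j(\bm x)\mapsto\lambda^{-1}\bm\varphi_j(\lambda^{-1}\bm x)$, both sides transform in a controlled way, so it suffices — and this is in any case the only situation used later, where $\nu=\ell=1$ — to prove $\int_\Omega\rho^2\le\kappa\sum_{j=1}^N\norm{\bm\varphi_j}{W^{1,2}(\Omega)}{2}$ for suborthonormal $\{\bm\varphi_j\}$ on a domain of unit diameter, the general dependence on $\diam\Omega$ then following by dilation.

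Next I would fix, once and for all, a bounded linear extension operator $E\colon W^{1,2}(\Omega)\to W^{1,2}(\mathbb{R}^2)$ — available since $\Omega$ is Lipschitz (indeed $\mathcal{C}^{1,1}$ in all our applications) — composed with a fixed cutoff $\chi\in\mathcal{C}_0^\infty(\mathbb{R}^2)$, $\chi\equiv1$ on $\Omega$, so that $\tilde E:=\chi E$ maps $W^{1,2}(\Omega)$ boundedly into $W^{1,2}(\mathbb{R}^2)$ with all images supported in one fixed bounded set. Setting $\tilde{\bm\varphi}_j:=\tilde E\bm\varphi_j$ and $\tilde\rho:=\sum_{j=1}^N|\tilde{\bm\varphi}_j|^2$, we have $\tilde{\bm\varphi}_j=\bm\varphi_j$ on $\Omega$, hence $\rho\le\tilde\rho$ there and $\int_\Omega\rho^2\le\int_{\mathbb{R}^2}\tilde\rho^2$, while $\norm{\tilde{\bm\varphi}_j}{W^{1,2}(\mathbb{R}^2)}{}\le C(\Omega)\norm{\bm\varphi_j}{W^{1,2}(\Omega)}{}$.

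It then remains to bound $\int_{\mathbb{R}^2}\tilde\rho^2$ by $C\sum_j\norm{\tilde{\bm\varphi}_j}{W^{1,2}(\mathbb{R}^2)}{2}$, and this is where the whole–space inequality enters: for any family suborthonormal in $L^2(\mathbb{R}^2)$ one has $\int_{\mathbb{R}^2}\bigl(\sum_j|\psi_j|^2\bigr)^2\le\kappa_0\sum_j\norm{\nabla\psi_j}{L^2(\mathbb{R}^2)}{2}$ (equivalently, $\norm{\rho_\gamma}{L^2(\mathbb{R}^2)}{2}\le\kappa_0\operatorname{tr}(-\Delta\gamma)$ for a density matrix $0\le\gamma\le1$). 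The obstacle — and essentially the only real difficulty — is that the extended family $\{\tilde{\bm\varphi}_j\}$, while still spanning a space of dimension $\le N$, is no longer suborthonormal in $L^2(\mathbb{R}^2)$: its Gram matrix picks up an error governed by the Dirichlet forms of the $\bm\varphi_j$. Following \cite{GMT88}, one absorbs this defect at the price of the zeroth–order term: one localizes via a partition of $\mathbb{R}^2$ into unit cubes, applies the whole–space inequality on each piece where the family is ``almost'' suborthonormal, and sums, the accumulated error being exactly of the form $C\sum_j\norm{\bm\varphi_j}{L^2(\Omega)}{2}$ — which is precisely why that term appears on the right-hand side. Undoing the initial scaling finishes the proof.

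An alternative, self-contained route that I find cleaner to organise avoids the extension altogether: one works with the shifted Neumann Laplacian $A=-\Delta_{\mathrm{N}}+1$ on $\Omega$, for which $\operatorname{tr}(A\gamma)=\sum_j\bigl(\norm{\nabla\bm\varphi_j}{L^2(\Omega)}{2}+\norm{\bm\varphi_j}{L^2(\Omega)}{2}\bigr)$ when $\gamma=\sum_j|\bm\varphi_j\rangle\langle\bm\varphi_j|$, and deduces $\norm{\rho_\gamma}{L^2(\Omega)}{2}\le\kappa\operatorname{tr}(A\gamma)$ from the on–diagonal heat–kernel bound $e^{-tA}(\bm x,\bm x)\le C\min\{1,t^{-1}\}$ — classical on a bounded Lipschitz domain — via the standard Rumin/Lieb duality argument, after which scaling supplies the $\diam\Omega$ dependence. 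Either way, the heart of the matter is the control of the $L^2$–orthonormality defect (respectively, the heat–kernel estimate), everything else being routine.
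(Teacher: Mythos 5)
Your proposal is correct in outline, but it takes a genuinely different (and much longer) route than the paper: the paper's entire proof is a one-line citation, since the proposition is exactly \cite[Theorem 2.1]{GMT88} specialized to $m=1$, $n=k=p=2$, whereas you sketch a proof of that theorem itself — extension to $\rr^2$ plus the whole-space suborthonormal Lieb--Thirring bound — together with a self-contained alternative via the Neumann Laplacian, heat-kernel bounds and Rumin/Lieb duality. Both routes are legitimate; what the citation buys is brevity, what your argument buys is independence from the reference (the heat-kernel version especially). Two caveats on your write-up. First, the dilation $\bm x\mapsto\lambda\bm x$ with the $L^2$-preserving rescaling yields the scale-correct factor $(\diam\Omega)^{-2}$ in front of the $L^2$ term, not the $(\diam\Omega)^{-1}$ of the statement; since $\kappa$ is only required to be independent of $N$ (it may depend on $\Omega$) and the paper invokes the inequality only after rescaling to $\diam\Omega=1$, this is harmless, but the phrase ``the general dependence on $\diam\Omega$ then following by dilation'' is not literally true for the stated form. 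Second, the step you yourself call the heart of the matter — the extended family losing suborthonormality — is only asserted (``partition into unit cubes \dots error exactly of the form $C\sum_j\norm{\bm\varphi_j}{\ldvaom}{2}$''), which as written cannot be checked. It admits a much simpler repair: take a Stein/Calder\'on extension $\tilde E$ bounded both on $L^2$ and on $W^{1,2}$; then for any $\xi\in\rr^N$, by linearity, $\sum_{i,j}\xi_i\xi_j(\tilde E\bm\varphi_i,\tilde E\bm\varphi_j)_{L^2(\rr^2)}=\lVert\tilde E\bigl(\sum_i\xi_i\bm\varphi_i\bigr)\rVert_{L^2(\rr^2)}^2\le C\lVert\sum_i\xi_i\bm\varphi_i\rVert_{\ldvaom}^2\le C\sum_i\xi_i^2$, so $\{\tilde E\bm\varphi_j/\sqrt{C}\}$ is suborthonormal in $L^2(\rr^2)$ and the whole-space inequality applies directly, with no cube decomposition and with the $L^2$ term entering only through the norm of the extension operator. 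With that replacement (or via your heat-kernel route), the argument is complete.
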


\begin{proof}
Follows directly from \cite[Theorem 2.1]{GMT88}, with
$m=1$ and $n=k=p=2$.
\qed \end{proof}

Finally, we need to know something about the behavior of eigenvalues of our Stokes problem.

\begin{proposition} \label{thm:EA2d} Let $\mu_k$ be the sequence
of eigenvalues of the Stokes problem \eqref{eq:VlastniCisla}
with $\diam\Omega = 1$. Then $\mu_k \sim k$ as $k\to \infty$.
\end{proposition}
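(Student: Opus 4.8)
The statement to be proved is the Weyl-type asymptotics $\mu_k \sim k$ for the eigenvalues of the dynamic Stokes operator on a planar domain of unit diameter. The plan is to compare the sequence $\{\mu_k\}$ with the eigenvalue sequence of a more standard operator whose asymptotics are classical, using the min-max (Courant--Fischer) characterization, and then exploit the fact that $V$ embeds (by Korn's inequality, \autoref{thm:Korn}) into $W^{1,2}(\Omega)$ with a two-dimensional boundary term.

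First I would record the variational form of the eigenvalue problem: from \eqref{eq:VlastniCisla} one has
\[
  \mu_k = \min_{\substack{E \subset V \\ \dim E = k}} \max_{\substack{\bm \varphi \in E \\ \bm \varphi \neq 0}}
    \frac{\|\bm \varphi\|_V^2}{\|\bm \varphi\|_H^2}
    = \min_{\substack{E \subset V \\ \dim E = k}} \max_{\substack{\bm \varphi \in E \\ \bm \varphi \neq 0}}
    \frac{\|\bm{D\varphi}\|_{L^2(\Omega)}^2 + \alpha \|\bm \varphi\|_{L^2(\partial\Omega)}^2}
         {\|\bm \varphi\|_{L^2(\Omega)}^2 + \beta \|\bm \varphi\|_{L^2(\partial\Omega)}^2}.
\]
Next I would bound this Rayleigh quotient from above and below by the corresponding quotient for the Stokes operator with pure Navier (perfect-slip, $\alpha=\beta$ normalised) boundary conditions, i.e.\ the operator acting on $W^{1,2}_{\sigma,\bm n}(\Omega)$ with form $\|\bm{D\varphi}\|_{L^2(\Omega)}^2$ relative to $\|\bm \varphi\|_{L^2(\Omega)}^2$; the trace terms contribute only lower-order perturbations because the boundary is one-dimensional while $\Omega$ is two-dimensional, so by the trace inequality and interpolation $\|\bm\varphi\|_{L^2(\partial\Omega)}^2 \le \varepsilon \|\bm\varphi\|_{W^{1,2}(\Omega)}^2 + C_\varepsilon \|\bm\varphi\|_{L^2(\Omega)}^2$. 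Combining this with Korn's inequality shows there are constants $0 < c \le C$ with $c\, \lambda_k \le \mu_k \le C\, \lambda_k + C$, where $\lambda_k$ are the eigenvalues of the Laplace-type (Stokes) operator on $W^{1,2}_{\sigma,\bm n}(\Omega)$. For that operator the classical Weyl law on a bounded planar domain gives $\lambda_k \sim c_\Omega\, k$ (the divergence-free constraint changes only the multiplicative constant, not the linear growth in dimension two), whence $\mu_k \asymp k$.

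To upgrade $\asymp$ to the sharp $\sim$ I would instead run the comparison directly at the level of counting functions: let $N(\lambda) = \#\{k : \mu_k \le \lambda\}$, and sandwich $N(\lambda)$ between the counting functions of the two comparison operators obtained by throwing the boundary terms entirely onto the numerator or the denominator (Dirichlet--Neumann bracketing on the boundary). Because the boundary contribution scales like $\lambda^{1/2}$ while the leading Weyl term scales like $\lambda$, both comparison counting functions have the same leading asymptotics $N_\pm(\lambda) = (|\Omega|/4\pi)\lambda + O(\lambda^{1/2})$, the $|\Omega|/4\pi$ being the two-dimensional Weyl constant (independent of $\alpha$, $\beta$ up to these lower-order corrections). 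Inverting $N(\lambda) \sim (|\Omega|/4\pi)\lambda$ gives $\mu_k \sim (4\pi/|\Omega|)\, k$, which with $\diam\Omega = 1$ is the claimed $\mu_k \sim k$ (up to the shape-dependent constant absorbed into ``$\sim$'' as the authors use it, meaning comparable order).

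The main obstacle is handling the dynamic (Robin-type, time-derivative) boundary term correctly inside the min-max: one must be careful that the $\beta$-weighted boundary norm in the \emph{denominator} of the Rayleigh quotient does not spoil the Weyl asymptotics. The cleanest route is the bracketing argument above — replacing $\|\bm\varphi\|_H^2 = \|\bm\varphi\|_{L^2(\Omega)}^2 + \beta\|\bm\varphi\|_{L^2(\partial\Omega)}^2$ by $\|\bm\varphi\|_{L^2(\Omega)}^2$ from below and by $(1+C\beta)\|\bm\varphi\|_{W^{1,2}(\Omega)}^2$-type bounds from above, and symmetrically for the numerator — so that the spectrum is squeezed between two operators whose symbols agree to leading order; a secondary technical point is that the solenoidal constraint and the normal-trace condition $\bm\varphi\cdot\bm n = 0$ must be respected in the comparison spaces, but this only affects the (irrelevant for ``$\sim$'') constant via the standard fact that the Stokes spectral counting function has the same leading Weyl term as the scalar Laplacian times the number of components minus the gradient directions.
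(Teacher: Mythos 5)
There is a genuine gap, and it lies exactly at the point your argument leans on hardest: the claim that the boundary terms are ``lower-order perturbations'' whose contribution to the counting function is $O(\lambda^{1/2})$. For this Wentzell/dynamic-type eigenvalue problem the boundary mass $\beta\|\bm\varphi\|_{L^2(\Bdry)}^2$ sits in the \emph{denominator} of the Rayleigh quotient, and it produces a whole branch of boundary-concentrated (Steklov/Dirichlet-to-Neumann type) eigenvalues. Since $\Bdry$ is one-dimensional, that branch grows \emph{linearly} in $j$ (roughly like $\sigma_j/\beta$ with $\sigma_j\sim 2\pi j/|\Bdry|$), so its counting function is of order $\lambda$, the \emph{same} order as the interior Weyl term — not $\lambda^{1/2}$. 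Consequently your bracketing conclusion $N(\lambda)=(|\Omega|/4\pi)\lambda+O(\lambda^{1/2})$ with a $\beta$-independent constant is false; the correct leading constant involves $|\Bdry|$ and $\beta$ as well. The same problem undermines your first route: absorbing the boundary term via $\|\bm\varphi\|_{L^2(\Bdry)}^2\le\varepsilon\|\bm\varphi\|_{W^{1,2}(\Omega)}^2+C_\varepsilon\|\bm\varphi\|_{L^2(\Omega)}^2$ in the denominator gives a quotient bounded below by $t/\bigl(1+\beta C_\varepsilon+\beta\varepsilon t\bigr)$ with $t=\|\bm\varphi\|_{W^{1,2}}^2/\|\bm\varphi\|_{L^2}^2$, which saturates at $1/(\beta\varepsilon)$; optimizing $\varepsilon$ in $k$ yields only $\mu_k\gtrsim\sqrt{k}$, not the claimed $\mu_k\ge c\,\lambda_k$. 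The upper bound $\mu_k\le\lambda_k$ (drop the boundary mass from the denominator) is fine, but it is the lower bound that the dimension estimate actually needs, and your argument does not deliver it.

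The paper's proof avoids this trap by \emph{not} discarding the boundary mass: it bounds the denominator by $\cbeta$ times $\|\bm\varphi\|_{L^2(\Omega)}^2+\|\bm\varphi\|_{L^2(\Bdry)}^2$ and compares, via min–max, with the eigenvalues $\sigma_j$ of the corresponding Steklov-type problem (Laplacian with eigenvalue-dependent boundary condition), for which the linear growth $\sigma_j\sim j$ in a bounded 2D domain is cited from von Below–François. This gives $\mu_j\ge\cbeta^{-1}\sigma_j\gtrsim j/\cbeta$, which is all that is used in the trace estimate. If you want to salvage your approach, you must keep the boundary branch explicitly (e.g.\ two-branch asymptotics or a comparison problem that retains the boundary measure), and acknowledge that the resulting constants depend on $\beta$ and $|\Bdry|$; the linear rate $\mu_k\asymp k$ survives only because, in 2D, the boundary branch happens to grow at the same linear rate as the interior Weyl branch.
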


\begin{proof}
By the min-max principle, we can write
\begin{align*}
\mu_j & = \max \min \frac{ \intom |\nabla \bu |^2 + |\bu|^2 }
			{\intom |\bu|^2  + \beta \int_{\partial \Omega} |\bu|^2 } \\
	& \quad \ge \cbeta^{-1} 
		\max \min \frac{ \intom |\nabla \bu |^2 + |\bu|^2 }
			{\intom |\bu|^2  + \int_{\partial \Omega} |\bu|^2 }
	= \cbeta^{-1} \sigma_j .
\end{align*}
Here $\sigma_j$ are the eigenvalues corresponding to the Steklov
problem, which behave as $\sigma_j \sim j$ (recall that we
are in a bounded 2D domain, see \cite{BelFr05}).
\qed \end{proof}

%%%%%%%%%%%%%%%%%%%%%%%%%%%%%%%%%%%%%%%%
\begin{remark} By a suitable scaling, one can always assume:
\begin{enumerate}
\item $\nu=1$ and $\diam\Omega=1$, if $\Omega$ is bounded.
\item $\nu=\alpha=1$, if $\Omega$ is unbounded.
\end{enumerate}
\end{remark}
%%%%%%%%%%%%%%%%%%%%%%%%%%%%%%%%%%%%%%%%%%%%%%%%%%%%%%%%%%%%
\begin{proof}
Let (\ref{est-r1}--\ref{est-r2}) be given. Replacing
$\bu(\bm x,t)$ by $a \bu(\bm x/\ell,t/\tau)$ and $\pi(\bm x,t)$ by $a^2\pi(\bm x/\ell,t/\tau)$,
where $\ell = \diam\Omega$, one obtains
\begin{align*}	
\frac{a}{\tau}
\partial_t \bu - 
\frac{\nu a}{\ell^2} 
\diver \bm{Du} + \frac{a^2}{\ell} (\bu \cdot \nabla)\bu + 
\frac{a^2}{\ell} \nabla\pi &= \bm f, 
\, \diver \bu = 0 \quad \textrm{in $(0, T) \times \Omega$},
\\				
\frac{a \beta}{\tau \ell} \partial_t \bu + 
\frac{a \alpha}{\ell} \bu + 
\frac{a \nu }{\ell^2} [(\bm {Du})\bm n]_{\tau} &= \frac{\beta}{\ell} \bm h,
\, \bu \cdot \bm n = 0 \quad \textrm{on $(0, T) \times \partial \Omega$}.
\end{align*}
We now impose the relations
\begin{equation*}	
\frac{a}{\tau} = \frac{a^2}{\ell} = \frac{a \nu }{\ell^2} \, .
\end{equation*}
This implies that $\tau = \ell^2/\nu$, $a=\nu/\ell$, in terms
of given $\ell$, $\nu>0$. Dividing both equations by 
$a/\tau = \nu^2/\ell^3$, we come to
\begin{align*}   
\partial_t \bu - \diver \bm{Du} + (\bu \cdot \nabla)\bu + \nabla\pi &= \tilde{\bm f},
\quad \diver \bu = 0 \qquad \textrm{in $(0, T) \times\Omega$},
\\             
\tilde \beta \partial_t \bu + \tilde \alpha \bu + [(\bm{Du})\bm n]_{\tau} &= \tilde \beta \tilde{\bm h},
\quad \bu \cdot \bm n = 0 \qquad \textrm{on $(0, T) \times \partial \Omega$},
\end{align*}
where
\begin{equation} \label{backsc}
\tilde \alpha = \frac{\alpha \ell}{ \nu}\, , \quad
\tilde \beta = \frac{\beta}{\ell}\, , \quad
\tilde{\bm f} = \frac{\ell^3}{\nu^2} \bm f , \quad
\tilde{\bm h} = \frac{\ell^3}{ \nu^2} \bm h  .
\end{equation}
In the case of $\Omega$ unbounded, we are also free to choose
$\ell= \nu/\alpha$ so that $\tilde\alpha=1$ and 
\begin{equation*}	
\tilde \beta = \frac{\alpha \beta}{\nu}\, , \quad
\tilde{\bm f}  = \frac{\nu}{\alpha^3} \bm f , \quad
\tilde{\bm h} = \frac{\nu}{\alpha^3} \bm h .
\end{equation*}
\qed \end{proof}

\bibliography{bibliography}
\bibliographystyle{plain} 

\end{document}